\documentclass[11pt]{amsart}

\headheight=7pt         \topmargin=14pt
\textheight=574pt       \textwidth=445pt
\oddsidemargin=18pt     \evensidemargin=18pt

\usepackage{amsmath,amssymb,amsthm,amsfonts,amscd,flafter,epsf,epsfig,graphicx,verbatim}
\usepackage[all]{xy}

\title[Khovanov homology, open books, and tight contact structures]{Khovanov homology, open books, and tight contact structures}

\author[John A.\ Baldwin]{John A.\ Baldwin}
\address{Department of Mathematics,
Princeton University,
Priceton, NJ 08544-1000, USA}
\email{baldwinj@math.princeton.edu}
\thanks{}

\author[Olga Plamenevskaya]{Olga Plamenevskaya}
\address{Department of Mathematics,
Stony Brook University,
Stony Brook, NY 11794-3651, USA}
\email{olga@math.sunysb.edu}
\thanks{The first author was partially supported by an NSF Postdoctoral Fellowship.}


\newcommand\Sc{\text{Spin}^c}
\newcommand\spc{\mathfrak{s}}

\newcommand\so{\mathfrak{s}_o}
\newcommand\kh{\widetilde{Kh}}

\newcommand\hf{\widehat{HF}}

\newcommand\cf{\widehat{CF}}

\newcommand\hfk{\widehat{HFK}}
\newcommand\zz{\mathbb{Z}}

\newcommand\zzt{\mathbb{Z}_2}

\newcommand\khc{\psi}
\newcommand\khct{\widetilde{\psi}}

\newcommand{\veem}{\mathbf{v}_-}
\newcommand{\veep}{\mathbf{v}_+}
\newcommand{\QQ}{\mathbb{Q}}

\newtheorem{theorem}{Theorem}[section]
\newtheorem{lemma}[theorem]{Lemma}

\newtheorem{corollary}[theorem]{Corollary}
\newtheorem{proposition}[theorem]{Proposition}

\theoremstyle{definition}
\newtheorem{definition}[theorem]{Definition}

\newtheorem{remark}[theorem]{Remark}

\newtheorem{example}[theorem]{Example}

\begin{document}
\begin{abstract}  
We define the reduced Khovanov homology of an open book $(S,\phi)$, and we identify a distinguished ``contact element" in this group which may be used to establish the tightness or non-fillability of contact structures compatible with $(S,\phi)$. Our construction generalizes the relationship between the reduced Khovanov homology of a link and the Heegaard Floer homology of its branched double cover. As an application, we give combinatorial proofs of tightness for several contact structures which are not Stein-fillable. Lastly, we investigate a comultiplication structure on the reduced Khovanov homology of an open book which parallels the comultiplication on Heegaard Floer homology defined in \cite{bald3}.
\end{abstract} 

\maketitle

\section{Introduction}
\label{sec:intro}
The goal of this paper is to demonstrate how Khovanov homology and related ideas may be used to combinatorially establish the tightness or non-fillability of certain contact structures. Let $S$ be a compact, oriented surface with boundary, and let $\phi$ be a composition of Dehn twists around homotopically non-trivial curves in $S$. The abstract open book $(S,\phi)$ corresponds to a contact 3-manifold, which we denote by $(M_{S,\phi},\,\xi_{S,\phi})$ \cite{giroux,thwin}. In this paper, we use the link surgeries spectral sequence machinery of Ozsv{\'a}th and Szab{\'o} \cite{osz12} to define a filtered chain complex $(C(S,\phi), D)$ whose homology is isomorphic to $\hf(-M_{S,\phi})$ (we work with $\zzt$ coefficients throughout). We then define the \emph{reduced Khovanov homology} of the open book $(S,\phi)$ to be the $E^2$ term of the spectral sequence associated to this filtered complex, and we identify an element $\khc(S,\phi) \in \kh(S,\phi)$ which is related to the Ozsv{\'a}th-Szab{\'o} contact invariant $c(S,\phi) \in \hf(-M_{S,\phi})$ via this spectral sequence. 

Let $S_{k,r}$ denote the genus $k$ surface with $r$ boundary components. By Giroux's correspondence \cite{giroux}, every contact 3-manifold is compatible with an open book of the form $(S_{k,1},\phi).$ Moreover, any boundary-fixing diffeomorphism of $S_{k,1}$ is isotopic (rel. $\partial S_{k,1}$) to a composition of Dehn twists around the curves $\alpha_0, \dots, \alpha_{2k}$ depicted in Figure \ref{fig:genset} \cite{hum}. We show that $\kh(S_{k,1},\phi)$ and $\khc(S_{k,1},\phi)$ are combinatorially computable when $\phi$ is such a composition.

\begin{figure}[!htbp]
\begin{center}
\includegraphics[width=7cm]{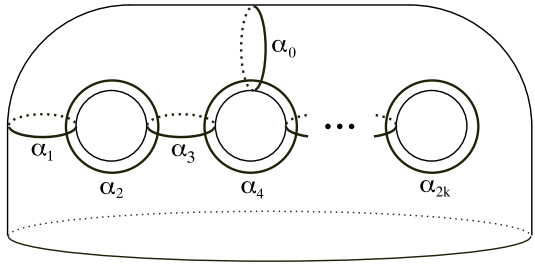}
\label{fig:genset}
\caption{\quad The surface $S_{k,1}$, and the curves $\alpha_0,\dots, \alpha_{2k}$.}
\end{center}
\end{figure}

When the contact manifold $(M_{S,\phi},\,\xi_{S,\phi})$ is the branched double cover of a transverse link, our construction specializes to the setup of \cite{pla1}. Indeed, let $w = w(\sigma_1,\sigma_1^{-1},\dots,\sigma_{2k},\sigma_{2k}^{-1})$ be a word in the elementary generators (and their inverses) of the braid group on $2k+1$ strands, and let $L_w$ denote the closure of the braid specified by $w$. We may think of $L_w$ as a transverse link in standard contact structure $\xi_{st}$ on $S^3$, and lift $\xi_{st}$ to a contact structure $\xi_{L_w}$ on the double cover $\Sigma(L_w)$ of $S^3$ branched along $L_w$. The contact structure $\xi_{L_w}$ is compatible with a natural open book decomposition $(S_{k,1},\phi_w)$ of $\Sigma(L_w)$, where $\phi_w = w(D_{\alpha_1},D_{\alpha_1}^{-1},\dots,D_{\alpha_{2k}},D_{\alpha_{2k}}^{-1})$ (here, $D_{\gamma}$ stands for the right-handed Dehn twist around the curve $\gamma$). According to \cite{osz12}, the Heegaard Floer homology $\hf(-\Sigma(L_w))$ can be computed via a link surgeries spectral sequence whose $E^2$ term is isomorphic to the reduced Khovanov homology of $L_w$. In this case, $\kh(S_{k,1},\phi_w)$ is the same as $\kh(L_w),$ and the element $\khc(S_{k,1},\phi_w)$ coincides with the transverse link invariant $\khc(L_w)$ introduced by the second author in \cite{pla1}. As shown by L. Roberts in \cite{lrob1}, $\khc(L_w)$ ``corresponds" (in a sense to be made precise) to the contact invariant $c(\xi_{L_w}) \in \hf(-\Sigma(L_w))$. Our construction generalizes this result.

The vector space $\kh(S,\phi)$ inherits a grading from the filtration of $(C(S,\phi),D)$, which we refer to as the ``homological grading" (and also as the ``$I$-grading") since it agrees with the homological grading on reduced Khovanov homology under the specialization described in the previous paragraph. The element $\khc(S,\phi)$ is contained in homological grading $0$. In Section \ref{sec:invc}, we show that the graded vector space $\kh(S,\phi)$ is invariant under stabilization of the open book. The element $\khc(S,\phi)\in \kh(S,\phi)$ is invariant under positive stabilization, and is killed by negative stabilization. On the other hand, $\kh(S,\phi)$ is not an invariant of the isotopy class of $\phi$ (the rank of $\kh(S,\phi)$ depends on the precise way that $\phi$ is written as a composition of Dehn twists). 
Even so, the element $\khc(S,\phi)$ may sometimes be used, per the following theorem, to determine whether the contact structure $\xi_{S,\phi}$ is tight.

\begin{theorem}
\label{thm:tight}
If the spectral sequence from $\kh(S,\phi)$ to $\hf(-M_{S,\phi})$ collapses at the $E^2$ term then $\khc(S,\phi) \neq 0$ implies that $c(S,\phi) \neq 0$, and, hence, that $\xi_{S,\phi}$ is tight. 
\end{theorem}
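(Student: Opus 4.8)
The plan is to deduce the statement from two inputs: a structural property of $\khc(S,\phi)$ built into its construction, combined with the standard behavior of a spectral sequence that collapses; and the Ozsv\'ath--Szab\'o fact that an overtwisted contact structure has vanishing contact invariant.

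First I would make explicit the relationship between $\khc(S,\phi)$ and $c(S,\phi)$ recorded in the introduction. The construction furnishes a distinguished chain $x_{S,\phi} \in C(S,\phi)$, supported in homological grading $0$, which is a cycle for the full differential $D$, whose class under the identification $H_*(C(S,\phi),D) \cong \hf(-M_{S,\phi})$ is the contact invariant $c(S,\phi)$, and whose induced class on the $E^2$ page is, by definition, $\khc(S,\phi)$. Since $x_{S,\phi}$ is a genuine $D$-cycle it determines a compatible family of classes on the pages $E^r$, $r \geq 0$; because the complex is finitely generated the spectral sequence converges, and the class determined by $x_{S,\phi}$ on $E^\infty$ is---under the usual identification of $E^\infty$ with the associated graded of the induced filtration on $H_*(C(S,\phi),D)$---precisely the leading term of $c(S,\phi) \in \hf(-M_{S,\phi})$ in homological grading $0$.

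Now assume the spectral sequence collapses at the $E^2$ term, so that $E^2 = E^\infty$. Then the class determined by $x_{S,\phi}$ on $E^\infty$ equals $\khc(S,\phi)$ on the one hand and the leading term of $c(S,\phi)$ on the other, so $\khc(S,\phi) \neq 0$ forces $c(S,\phi) \neq 0$. Finally, by Ozsv\'ath and Szab\'o, the contact invariant of an overtwisted contact structure is zero, so $c(S,\phi) \neq 0$ implies that $\xi_{S,\phi}$ is tight; this is the assertion of the theorem.

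Everything after the first step is formal spectral-sequence bookkeeping, and collapse at $E^2$ is precisely the hypothesis needed to promote information living on the $E^2$ page to information about $\hf(-M_{S,\phi})$ itself. The step that requires genuine care---and which I expect to be the crux---is the first one: verifying that the Heegaard Floer contact class and the Khovanov contact element are carried by one and the same chain $x_{S,\phi}$ in $C(S,\phi)$, rather than merely corresponding to each other up to chain homotopy. This amounts to revisiting the construction of $(C(S,\phi),D)$ from the link-surgeries spectral sequence of \cite{osz12}, pinpointing the preferred generator at the relevant vertex of the associated cube of resolutions, and checking that it simultaneously represents $c(S,\phi)$ in homology and descends to $\khc(S,\phi)$ on $E^2$.
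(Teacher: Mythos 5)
Your argument has the same logical skeleton as the paper's: everything rests on producing a single chain that is a cycle for the total differential, represents the contact class in homology, and descends to $\khc(S,\phi)$ on $E^2$; after that, the implication ``nonzero on a collapsing $E^2$ page $\Rightarrow$ nonzero in homology'' is formal, and tightness follows from vanishing of $c$ for overtwisted structures. Two remarks. First, the step you correctly flag as the crux is exactly where the paper does its work, and your phrasing ``$x_{S,\phi}\in C(S,\phi)$'' is slightly too optimistic: in the unreduced complex the bottom Alexander-filtration level need not be one-dimensional, and a chain representing the bottom $\hfk$ class of $\cf((S,\phi)_{i_o})$ need not be closed under the components $D_{i_o,i'}$ pointing to other vertices of the cube. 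The paper's fix is to first apply the cancellation lemma within each $(I,A)$-bidegree to obtain a \emph{bi-filtered} chain homotopy equivalent complex $(C(S,\phi)',D')\cong\bigoplus_i\hfk((S,\phi)_i,B_i)$; only there is the bottom $A$-level spanned by a single generator $c$, which is automatically a $D'$-cycle (it lies in the minimal filtration level of an $A$-filtered differential), represents $c(S,\phi)$, and maps to $\khc(S,\phi)$ on $E^2_I$ by Lemma \ref{lem:psi}. So your first step is true, but only after this reduction. Second, where you invoke abstract $E^r$-page bookkeeping, the paper instead runs an explicit cancellation argument (cancel $D'$ within $I$-gradings, then cancel length-one components avoiding $c$, and derive a contradiction with collapse if $c$ were exact); the two are equivalent, and your version is arguably cleaner, but note that the one-directional nature of your conclusion is essential --- the image of $c(S,\phi)$ in $E^\infty_0\cong\mathcal{F}_0H/\mathcal{F}_1H$ can vanish while $c(S,\phi)\neq 0$, which is precisely the phenomenon of Remark \ref{rmk:corr}, so your argument proves exactly the stated implication and no more.
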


Observe that this spectral sequence collapses at the $E^2$ term whenever $$\text{rk}(\kh(S,\phi)) = |H_1(-M_{S,\phi};\mathbb{Z})|,$$ since $\text{rk}(\hf(-M_{S,\phi})) \geq |H_1(-M_{S,\phi};\mathbb{Z})|$ \cite{osz14}.
Thus, in favorable cases, the collapsing condition is easy to verify (we use the computer programs {\tt Kh} and {\tt Trans} \cite{baldurl} to compute $\text{rk}(\kh(S,\phi))$ and to determine whether $\khc(S,\phi)\neq 0$). In the special case of branched double covers, we can often do without a computer since the spectral sequence from $\kh(L)$ to $\hf(-\Sigma(L))$ collapses at the $E^2$ term as long as $L$ is a quasi-alternating link \cite{osz12}. In particular, if the transverse knot $K$ belongs to a quasi-alternating knot type, then Theorem \ref{thm:tight} implies that $c(\xi_{K}) \neq 0$ whenever $\khc(K) \neq 0$, a result conjectured in \cite{pla1}. 

\begin{theorem} 
\label{thm:nonzero} 
If $K$ is a transverse knot for which $sl(K)=s(K)-1$ then $\khc(K) \neq 0$. The converse is 
also true if $K$ belongs to a quasi-alternating (or any $\kh$-thin\footnote{A knot is said to be ``$\kh$-thin" if its reduced Khovanov homology is supported in bi-gradings $(i,j)$, where $j-2i$ is some fixed constant (see \cite{manozs}, for example).}) knot type.
Here, $sl(K)$ is the self-linking number of the transverse knot, and $s(K)$ is Rasmussen's invariant \cite{ras3}. 
\end{theorem}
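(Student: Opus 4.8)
The plan is to prove the two implications by quite different means: the forward implication $sl(K)=s(K)-1\Rightarrow\khc(K)\neq 0$ holds for \emph{every} transverse knot and is where the work lies, while the converse is a short bigrading argument that uses $\kh$-thinness in an essential way.

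For the forward implication I would argue with the Lee-type deformation of reduced Khovanov homology; since the paper works over $\zzt$, this means the Bar--Natan deformation (Frobenius algebra $\zzt[x]/(x^2+x)$), as Lee's deformation degenerates over $\zzt$. The argument has four steps. (1) Locate the cycle: $\khc(K)$ is represented by the labelling of the oriented resolution of a braid representative of $K$ that carries $v_-$ on every Seifert circle; a count of grading shifts (as in \cite{pla1}) puts it in homological grading $0$ and in a fixed quantum grading that one identifies with $sl(K)$. (2) Recognize $\khc(K)$ as a leading term: the canonical cycle $\mathbf{s}_o$ of the deformed complex determined by the orientation of $K$ lives in homological grading $0$, and when expanded in the Khovanov generators its unique term of minimal quantum grading is exactly $\khc(K)$. (3) Recall Rasmussen's computation: the maximal value of $\min_{\text{terms}}(\text{quantum grading})$ over all deformed cycles homologous to $\mathbf{s}_o$ equals $s(K)-1$ (the reduced, $\zzt$ version of \cite{ras3}). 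Steps (1)--(3) already reprove the slice--Bennequin bound $sl(K)\le s(K)-1$, since $\mathbf{s}_o$ itself realizes the value $sl(K)$. (4) Run the leading-term argument in the equality case: if $\khc(K)=d_{Kh}\eta$ with $\eta$ homogeneous of homological grading $-1$ and quantum grading $sl(K)$, then $d_{BN}\eta=\khc(K)+(\text{terms of quantum grading}>sl(K))$, so $\mathbf{s}_o$ is deformed-homologous to $\mathbf{s}_o-d_{BN}\eta$, every term of which has quantum grading $>sl(K)$ --- contradicting step (3) when $s(K)-1=sl(K)$. Hence $\khc(K)\neq 0$. (This is a reduced, $\zzt$ analogue of Shumakovitch's reproof of the slice--Bennequin inequality.)

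For the converse, suppose in addition that $K$ is $\kh$-thin. Then $\kh(K)$ is supported on a single diagonal $j-2i=\delta$, and in the normalization above $\delta=s(K)-1$: one way to see this is that the one-dimensional reduced deformed homology is generated, at the $E_1=\kh$ page, in homological grading $0$, which forces the diagonal to equal the filtration level $s(K)-1$ of $\mathbf{s}_o$. Since $\khc(K)$ lies in homological grading $0$ and quantum grading $sl(K)$, the non-vanishing $\khc(K)\neq 0$ forces the bidegree $(0,sl(K))$ onto the diagonal, i.e. $sl(K)=\delta=s(K)-1$.

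The step I expect to be the main obstacle is (2): establishing carefully, over $\zzt$ and in the \emph{reduced} theory, that $\khc(K)$ is the leading term of a bona fide cycle generating reduced Bar--Natan homology, together with the bookkeeping that the relevant filtration level is $s(K)-1$ rather than $s(K)+1$ (i.e. that $\mathbf{s}_o$ is the generator for the correct orientation) and that $\khc(K)$ sits in quantum grading exactly $sl(K)$ in the normalization of $\kh$ that is in force. A secondary wrinkle is that $s$ in \cite{ras3} is taken over $\QQ$, so if one insists on that $s$ one needs a universal-coefficients comparison between $\khc(K)$ over $\zzt$ and over $\QQ$; over $\zzt$ itself the argument above is self-contained. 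Once these normalizations are pinned down, the leading-term argument of step (4) and the thin-knot bigrading argument are both routine.
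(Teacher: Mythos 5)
Your proposal is correct and follows essentially the same route as the paper: identify $\khc(K)$ as the lowest-quantum-grading part of the canonical Lee/Bar--Natan generator $\mathbf{s}_o$, use Rasmussen's $s$ as the filtration level of $[\mathbf{s}_o]$, and derive a contradiction from exactness of $\khc(K)$ when the bound is sharp, with the converse being the same thin-diagonal bigrading argument. The "wrinkles" you flag are exactly what the paper handles: it passes from Lee over $\QQ$ to Turner's $\zzt$ deformation (citing \cite{mvt} for the equality of the two $s$-invariants, which disposes of your universal-coefficients worry), and in the reduced theory both the quantum grading of $\widetilde{\khc}(K)$ and the filtration level of $\mathbf{s}_o$ shift up by one (to $sl(K)+1$ and $s(K)$ respectively, with the marked circle labeled $\veep$ rather than $\veem$), so the two shifts cancel and your step (4) goes through verbatim.
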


\begin{corollary} 
\label{cor:tightness} If $K$ is a transverse representative of a quasi-alternating knot and $sl(K) = \sigma(K)-1$ then $c(\xi_K) \neq 0$, and, hence, $\xi_K$ is tight. Here, $\sigma$ is the knot signature (with the convention that the right-handed trefoil has signature 2). 
\end{corollary}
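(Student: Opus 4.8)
The plan is to deduce the corollary as a formal consequence of Theorems~\ref{thm:nonzero} and~\ref{thm:tight}, together with two standard facts about quasi-alternating knots, so that the argument amounts to a short chain of implications plus some sign bookkeeping. First I would present the transverse knot $K\subset(S^3,\xi_{st})$ as the closure $L_w$ of a braid (possible by Bennequin's theorem) and invoke the discussion of the introduction to identify $\xi_K$ with the contact structure $\xi_{L_w}$ on $\Sigma(L_w)$ compatible with the open book $(S_{k,1},\phi_w)$. Under this identification one has $\khc(K)=\khc(S_{k,1},\phi_w)$, $c(\xi_K)=c(S_{k,1},\phi_w)$, and the filtered complex underlying the spectral sequence of Theorem~\ref{thm:tight} for $(S_{k,1},\phi_w)$ is precisely the one underlying the Ozsv\'ath--Szab\'o link surgeries spectral sequence from $\kh(L_w)$ to $\hf(-\Sigma(L_w))$.

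Next I would use that a quasi-alternating knot is $\kh$-thin \cite{manozs} and that for a thin knot Rasmussen's invariant is determined by the signature, $s(K)=-\sigma_{\mathrm{usual}}(K)$; in the convention of the corollary (the right-handed trefoil has signature $2$) this says $s(K)=\sigma(K)$. Hence the hypothesis $sl(K)=\sigma(K)-1$ becomes $sl(K)=s(K)-1$, and Theorem~\ref{thm:nonzero} then gives $\khc(K)\neq 0$, i.e.\ $\khc(S_{k,1},\phi_w)\neq 0$.

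Finally, since $K=L_w$ is quasi-alternating, the Ozsv\'ath--Szab\'o spectral sequence from $\kh(L_w)$ to $\hf(-\Sigma(L_w))$ collapses at the $E^2$ page \cite{osz12}; by the first paragraph this is exactly the spectral sequence appearing in Theorem~\ref{thm:tight} for the open book $(S_{k,1},\phi_w)$. That theorem therefore applies and yields $c(\xi_K)=c(S_{k,1},\phi_w)\neq 0$, and a contact $3$-manifold whose Ozsv\'ath--Szab\'o contact invariant is nonzero is tight. This gives the corollary.

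The only genuinely delicate points are the sign bookkeeping—correctly reconciling Rasmussen's $s$, the ambient signature, and the stated trefoil normalization so that ``$sl=\sigma-1$'' is literally the equation ``$sl=s-1$'' fed into Theorem~\ref{thm:nonzero}—and the verification that the filtered complex $(C(S,\phi),D)$ of the introduction specializes, in the branched-double-cover case $(S,\phi)=(S_{k,1},\phi_w)$, to the complex of \cite{osz12}, so that the collapse statement there may be quoted verbatim. Neither is difficult, but both are easy to get wrong; the substantive mathematics lives entirely in Theorems~\ref{thm:nonzero} and~\ref{thm:tight}.
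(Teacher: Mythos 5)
Your argument is correct and is essentially the paper's own proof: combine Theorem~\ref{thm:nonzero} with Theorem~\ref{thm:tight}, using that quasi-alternating knots satisfy $s(K)=\sigma(K)$ and that the spectral sequence from $\kh(K)$ to $\hf(-\Sigma(K))$ collapses at $E^2$ \cite{manozs,osz12}. The extra care you take with the identification of the branched-double-cover complex with the open book complex is exactly what Remark~\ref{rmk:braid} supplies.
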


Note that $sl(K) \leq s(K)-1$ for any transverse knot $K$ \cite{pla3, sh}. Therefore, the hypothesis in Theorem \ref{thm:nonzero} is equivalent to the sharpness of this upper bound for the self-linking number. In \cite{ng}, Ng tabulates the maximal self-linking numbers for knots with at most 10 crossings. Combining those values with the results above, we can, in certain cases, establish the existence of a tight contact structure on the branched double cover of a given knot. 

In Subsection \ref{ssec:tightness}, we provide several examples which demonstrate that our Khovanov-homological machinery is indeed useful and efficient for proving tightness. In particular, we show that $\xi_K$ is tight but not Stein-fillable (our tightness result is therefore non-trivial) for several infinite families of transverse knots $K$ which satisfy the hypotheses of Corollary \ref{cor:tightness}. Some of the knots we consider are transverse 3-braids. When $K$ is a transverse 3-braid, the contact structure $\xi_K$ is compatible with a genus one, one boundary component open book, and the question of whether $\xi_K$ is tight or overtwisted is resolved in \cite{bald1, hkm3}. However, in contrast to the techniques used in \cite{bald1, hkm3}, our methods apply 
to transverse braids of arbitary braid index, require no explicit calculation of the Heegaard Floer contact invariant, and are completely combinatorial. 

Our construction may also be used to combinatorially prove that certain contact structures are not strongly symplectically fillable.

\begin{proposition}
\label{prop:vanishing}
If $\kh(S,\phi)$ is supported in non-positive homological gradings and $\khc(S,\phi)=0$, then $c(S,\phi)=0$, and, hence, $\xi_{S,\phi}$ is not strongly symplectically fillable \cite{osz2}.
\end{proposition}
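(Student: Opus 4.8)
\noindent\emph{Proof strategy.}
The plan is to reduce the statement to formal features of the spectral sequence $\{E_r\}$ running from $\kh(S,\phi)=E_2$ to $\hf(-M_{S,\phi})$, with the support hypothesis used at exactly two points. Recall that $\{E_r\}$ is the spectral sequence of the $I$-grading filtration of $(C(S,\phi),D)$; let $F_\bullet\hf(-M_{S,\phi})$ be the induced, bounded, filtration on homology, with $F_p\hf(-M_{S,\phi})$ the image of $H_*(C_{\ge p})$ for $C_{\ge p}\subseteq C(S,\phi)$ the subcomplex spanned by homological gradings $\ge p$; then $E_\infty$ is the associated graded of $F_\bullet\hf(-M_{S,\phi})$. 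Recall also that, by construction, $c(S,\phi)$ is represented by a $D$-cycle lying in $C_{\ge 0}$ --- so $c(S,\phi)\in F_0\hf(-M_{S,\phi})$ --- and that $\khc(S,\phi)$ sits in homological grading $0$ and is the $E_2$-class of the grading-$0$ leading term of such a cycle.

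The steps: first, since $\kh(S,\phi)$ vanishes in every homological grading $i\ge 1$, each differential $d_r$ ($r\ge 2$) out of homological grading $0$ lands in a subquotient of $\kh(S,\phi)$ in grading $r\ge 1$, and so is zero; passing to successive quotients gives a surjection from the grading-$0$ part of $\kh(S,\phi)=E_2$ onto the grading-$0$ part of $E_\infty$. Second, the same vanishing gives $E_\infty=0$ in all gradings $\ge 1$, whence, the filtration being bounded, $F_1\hf(-M_{S,\phi})=0$; thus the grading-$0$ part of $E_\infty$ is precisely $F_0\hf(-M_{S,\phi})$, a subspace of $\hf(-M_{S,\phi})$. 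Combining the two, the image of $\khc(S,\phi)$ under the surjection equals the image of $c(S,\phi)$ in the grading-$0$ part of $E_\infty$ --- that is, $c(S,\phi)$ itself, viewed inside $F_0\hf(-M_{S,\phi})\subseteq\hf(-M_{S,\phi})$. Hence $\khc(S,\phi)=0$ forces $c(S,\phi)=0$, and the non-fillability of $\xi_{S,\phi}$ is then immediate from the vanishing of the Ozsv\'ath--Szab\'o contact invariant \cite{osz2}.

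The one ingredient here that is not pure spectral-sequence bookkeeping is the compatibility recalled at the end of the first paragraph: that $\khc(S,\phi)$ really is the $E_2$-leading term of a genuine $D$-cycle at filtration level $0$ representing $c(S,\phi)$, so that the vanishing of $\khc(S,\phi)$ propagates down the pages to the vanishing of the $E_\infty$-leading term of $c(S,\phi)$. I expect this to be exactly what was established when $\khc$ was defined and tied to $c$ via the spectral sequence, so that for the present proposition it need only be invoked --- I do not anticipate any genuine difficulty beyond applying it correctly and keeping track of the direction of the filtration. Finally it is worth pinpointing the role of the hypothesis: being supported in non-positive homological gradings kills the higher differentials leaving grading $0$ (so $\khc(S,\phi)$ controls the $E_\infty$-leading term of $c$) and collapses the part of the filtration above grading $0$ (so $F_0\hf(-M_{S,\phi})$ injects into $\hf(-M_{S,\phi})$, making $E_\infty$-level vanishing of $c$ genuine vanishing); without it, $\khc(S,\phi)=0$ would only place $c(S,\phi)$ in $F_1\hf(-M_{S,\phi})$, which need not be zero.
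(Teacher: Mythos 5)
Your argument is correct and is in substance the paper's own proof: both exploit that $E_2=\kh(S,\phi)$ (hence every later page and $E_\infty$) vanishes in positive homological gradings, so that the vanishing of the class of the distinguished cycle $c$ at $E_2$ propagates to $E_\infty$ and, since $F_1\hf(-M_{S,\phi})=0$, forces $c(S,\phi)=0$ in $F_0\hf(-M_{S,\phi})=\hf(-M_{S,\phi})$. The only difference is packaging --- the paper unwinds the formal convergence argument into an explicit induction on filtration level via the cancellation lemma (improving a primitive $y'$ with $D'(y')=c+x'$ until $x'=0$), while you invoke the standard facts about spectral sequences of bounded filtrations directly; the one non-formal input you flag, namely that $c(S,\phi)$ and $\khc(S,\phi)$ are both induced by a genuine cycle $c$ sitting in $I$-filtration level $0$, is exactly what Section 4 establishes via the bifiltered reduced complex $(C(S,\phi)',D')$.
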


In contrast, if $\phi$ is composed solely of right-handed Dehn twists, then $\kh(S,\phi)$ is supported in non-negative homological gradings and $\khc(S,\phi) \neq 0$. Recall that if $L$ is a transverse link in $S^3$, then the element $\khc(L) \in \kh(L)$ is contained in the bi-grading $(0,sl(L)+1)$ \cite{pla1}.
\begin{corollary}
\label{cor:vanishing} If $L$ is a transverse link for which $\kh(L)$ is supported in non-positive homological gradings and $\kh(L)$ vanishes in the bi-grading $(0,sl(L)+1)$, then $\xi_L$ is not strongly symplectically fillable.
\end{corollary}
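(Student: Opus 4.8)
The plan is to deduce Corollary~\ref{cor:vanishing} directly from Proposition~\ref{prop:vanishing} by specializing to the branched double cover situation described in the introduction. Recall that for a transverse link $L \subset (S^3,\xi_{st})$ presented as a braid closure via a word $w$, the lifted contact structure $\xi_L$ on $\Sigma(L)$ is compatible with the open book $(S_{k,1},\phi_w)$, and under this identification $\kh(S_{k,1},\phi_w) \cong \kh(L)$, $\khc(S_{k,1},\phi_w) = \khc(L)$, and $\xi_{S_{k,1},\phi_w} = \xi_L$. Thus the hypotheses of Corollary~\ref{cor:vanishing} translate precisely into the hypotheses of Proposition~\ref{prop:vanishing} for the open book $(S_{k,1},\phi_w)$.

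First I would record the translation of hypotheses. The assumption that $\kh(L)$ is supported in non-positive homological gradings is exactly the assumption that $\kh(S_{k,1},\phi_w)$ is supported in non-positive homological ($I$-)gradings, since the homological grading on $\kh(S,\phi)$ agrees with the homological grading on $\kh(L)$ under the specialization (this is noted in the paragraph preceding Theorem~\ref{thm:tight}). Next, I would invoke the fact, recalled just before the corollary, that $\khc(L)$ lives in bi-grading $(0, sl(L)+1)$; hence the hypothesis that $\kh(L)$ vanishes in bi-grading $(0,sl(L)+1)$ forces $\khc(L) = 0$, which is to say $\khc(S_{k,1},\phi_w) = 0$.

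With both hypotheses of Proposition~\ref{prop:vanishing} verified for $(S_{k,1},\phi_w)$, I would conclude $c(S_{k,1},\phi_w) = c(\xi_L) = 0$, and therefore, by the vanishing theorem for the contact invariant of strongly fillable contact structures \cite{osz2}, that $\xi_L$ is not strongly symplectically fillable. The only genuine point requiring care — the ``main obstacle,'' insofar as there is one — is making sure the identification of gradings and of the element $\khc$ under the branched-double-cover specialization is stated with enough precision that the hypothesis ``$\kh(L)$ vanishes in $(0,sl(L)+1)$'' really does imply $\khc(S_{k,1},\phi_w)=0$; this hinges on the compatibility of the $I$-grading with the homological grading of reduced Khovanov homology and on the bi-grading formula for $\khc(L)$ from \cite{pla1}, both of which are cited above. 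Once that is in place, the corollary is immediate.
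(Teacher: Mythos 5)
Your proposal is correct and follows exactly the route the paper intends: the corollary is stated as an immediate consequence of Proposition \ref{prop:vanishing} via the branched double cover dictionary ($\kh(L)\cong\kh(S_{k,1},\phi_w)$, $\khc(L)=\khc(S_{k,1},\phi_w)$, and the fact that $\khc(L)$ sits in bi-grading $(0,sl(L)+1)$), so vanishing of $\kh(L)$ in that bi-grading forces $\khc(L)=0$. The paper gives no separate proof, and your write-up supplies precisely the translation of hypotheses it leaves implicit.
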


In Subsection \ref{ssec:nonfill}, we give examples which demonstrate the use of Proposition \ref{prop:vanishing} and Corollary \ref{cor:vanishing} in proving that certain contact structures are not strongly symplectically fillable.

We conclude with an investigation of some additional structure on the reduced Khovanov homology of an open book. Specifically, we show that $\khc(S,\phi)$ behaves naturally with respect to a comultiplication map on $\kh(S,\phi)$. This closely parallels the behavior of the Ozsv{\'a}th-Szab{\'o} contact invariant under a similar comultiplication defined on Heegaard Floer homology in \cite{bald3}.





\subsection*{Acknowledgements} The authors thank Josh Greene, Lenny Ng, and Andras Stipsicz for 
very helpful conversations, and Lawrence Roberts for helpful correspondence.    

\newpage 

\section{The reduced Khovanov homology of an open book}
\label{sec:kh}
Let $S$ be a compact, oriented surface with boundary (we will use the notation $S_{k,r}$ when we wish to emphasize that $S$ has genus $k$ and $r$ boundary components). Recall that the 3-manifold $M_{S,\phi}$ is defined to be $S \times [0,1] /\sim$, where $\sim$ is the identification given by 
\begin{eqnarray*}
(x,1) \sim(\phi(x),0), && x \in S\\
(x,t) \sim (x, s), && x\in \partial S, \ t,s \in [0,1].
\end{eqnarray*} 
Suppose that $\phi$ is a composition of Dehn twists, $\phi=D_{\gamma_1}^{\epsilon_1}\cdots D_{\gamma_n}^{\epsilon_n}$ with $\epsilon_j \in \{-1,1\}$ (writing monodromy as composition $\phi=hg$, we assume that $h$ is performed first).  Choose points $0=t_1<\dots<t_n=1$ in the interval $[0,1]$. Then $-M_{S,\phi}$ is obtained from $-M_{S, id}$ by performing $\epsilon_j$-surgery on $\gamma_j \times \{t_j\}$, relative to the framing induced by $S$, for each $j=1,\dots, n$. See Figure \ref{fig:IdKirby} for an example.

\begin{figure}[!htbp]
\begin{center}
\includegraphics[width=11.5cm]{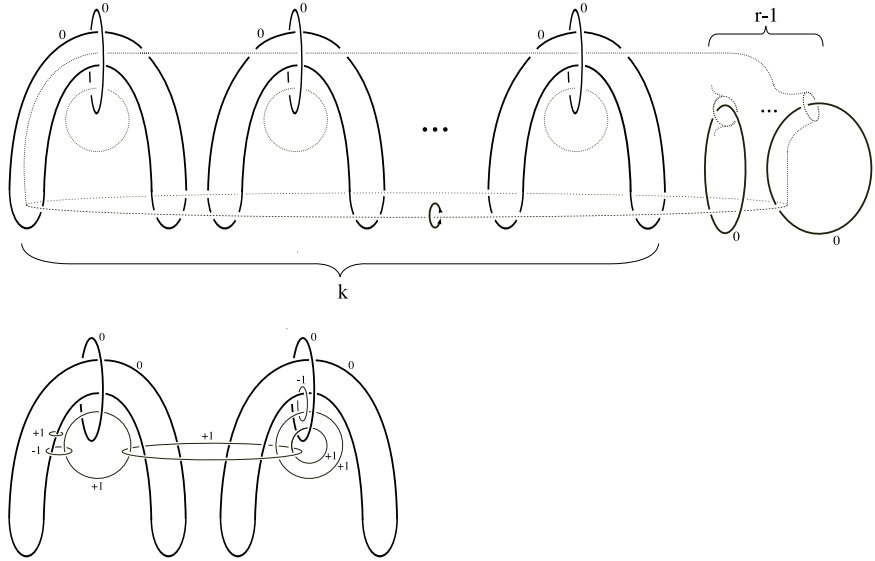}
\caption{\quad At the top is a surgery diagram for $-M_{S_{k,r},id}$. The dotted curves trace out a fiber $S_{k,r} \times \{t\}$. The boundary of this fiber is the binding $B$ of the open book $(S_{k,r},id)$. The arrow indicates the direction of increasing $t$ values (that is, the $S^1$ direction of the fibration). At the bottom is a surgery diagram for $-M_{S_{2,1},D_{\alpha_1}D_{\alpha_2}D_{\alpha_4}D_{\alpha_0}^{-1}D_{\alpha_4}D_{\alpha_1}^{-1}D_{\alpha_3}}$.}
\label{fig:IdKirby}
\end{center}
\end{figure}

For each vector $i=(i_1,\dots, i_n) \in \{0,1\}^n$ we define the ``complete resolution" $(S,\phi)_i$ to be the 3-manifold obtained from $-M_{S, id}$ by performing $m(\epsilon_j,i_j)$-surgery on $\gamma_j \times \{t_j\}$ for each $j=1\dots n$, where $$m(\epsilon_j,i_j)=
\left\{\begin{array}{ll}
0, &\text{if $(\epsilon_j,i_j)=(1,1)$ or $(-1,0)$},\\
\infty, &\text{if $(\epsilon_j,i_j)=(1,0)$ or $(-1,1).$}
\end{array}\right.$$ Said differently, in taking a complete resolution, we replace each $\epsilon_j$-surgery in $-M_{S,\phi}$ with a $0$- or $\infty$-surgery as prescribed above (see Figure \ref{fig:Res} for an example). This is analogous to replacing each crossing in a planar link diagram with one of its two resolutions. We make this analogy more precise in Remark \ref{rmk:braid}.

\begin{figure}[!htbp]
\begin{center}
\includegraphics[width=12.6cm]{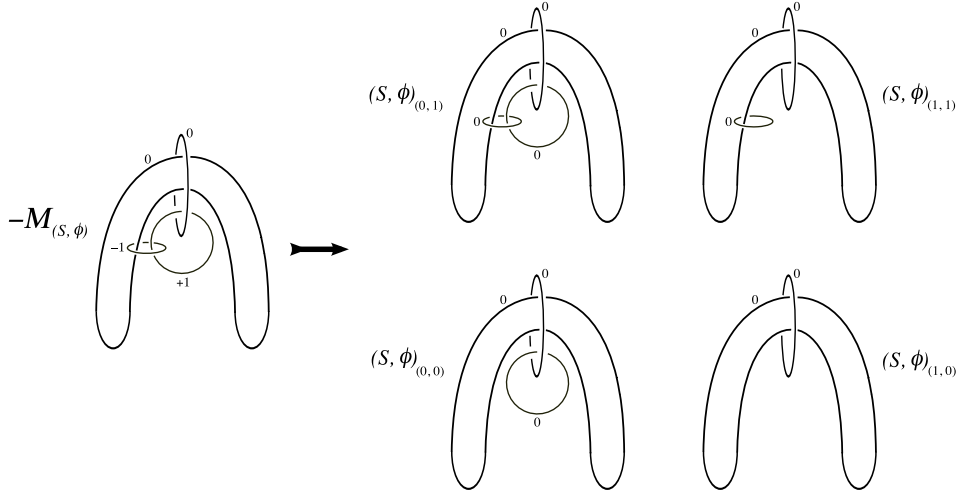}
\caption{\quad Surgery diagrams for the complete resolutions of the open book $(S,\phi)=(S_{1,1},D_{\alpha_2}D_{\alpha_1}^{-1})$.}
\label{fig:Res}
\end{center}
\end{figure}

Following Ozsv{\'a}th and Szab{\'o}, we construct a Heegaard multi-diagram compatible with all possible combinations of $\epsilon_j$-, $0$- and $\infty$-surgeries on the components of the link $L=\bigcup_{j=1}^n \gamma_j \times \{t_j\}$, and we use this to build a chain complex $(C(S,\phi), D)$ whose differential counts holomorphic polygons in a symmetric product of this multi-diagram \cite{osz12}. As a vector space, $$C(S,\phi)=\bigoplus_{i\in \{0,1\}^n}\cf((S,\phi)_i), $$ and $D$ is the sum of maps $$D_{i,i'}:\cf((S,\phi)_i)\rightarrow \cf((S,\phi)_{i'}),$$ over all pairs $i,i'$ for which $i \leq i'$ (we say that $i \leq i'$ if $i_j \leq i'_j$ for all $j=1,\dots,n$).

\begin{theorem}[{\rm \cite[Theorem 4.1]{osz12}}]
\label{thm:homology}
The homology $H_*(C(S,\phi),D)$ is isomorphic to $\hf(-M_{S,\phi})$.
\end{theorem}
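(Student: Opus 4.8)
The plan is to recognize the statement as an instance of the link surgeries spectral sequence of Ozsv\'{a}th--Szab\'{o} and reduce it to \cite[Theorem 4.1]{osz12}. The first step is bookkeeping: one identifies $C(S,\phi)$ with the surgery hypercube associated to the framed link $L=\bigcup_{j=1}^n\gamma_j\times\{t_j\}\subset -M_{S,id}$, with the framings induced by $S$. Under this identification the complete resolution $(S,\phi)_i$ is the manifold at vertex $i\in\{0,1\}^n$ (each $\gamma_j\times\{t_j\}$ surgered with slope $m(\epsilon_j,i_j)\in\{0,\infty\}$), the maps $D_{i,i}$ are the ordinary $\widehat{\partial}$'s of the $\cf((S,\phi)_i)$, and for $i<i'$ the map $D_{i,i'}$ is the holomorphic $(|i'-i|+2)$-gon count for the sub-multi-diagram spanned by the attaching curves between $i$ and $i'$. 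The one combinatorial fact worth isolating is that, for each $j$, the three slopes $m(\epsilon_j,0)$, $m(\epsilon_j,1)$, $\epsilon_j$ on $\gamma_j\times\{t_j\}$ form the set $\{0,\infty,+1\}$ or $\{0,\infty,-1\}$, which after orienting the surgery curves appropriately is a surgery triad in the sense of the Heegaard Floer exact triangle; this is what makes each elementary step of the construction an instance of that triangle.

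The isomorphism itself is then proved as in \cite{osz12}, by induction on $n$ applied to the general statement: for a framed link $L=L_1\cup\dots\cup L_n$ in a closed oriented $Y$ with each $L_j$ carrying such a triad, the homology of the total hypercube complex is $\hf$ of the manifold $Y_{\vec\epsilon}$ obtained by $\epsilon_j$-surgery on $L_j$ for all $j$; our case is $Y=-M_{S,id}$, $L_j=\gamma_j\times\{t_j\}$, $Y_{\vec\epsilon}=-M_{S,\phi}$. When $n=1$ the complex $\cf((S,\phi)_0)\oplus\cf((S,\phi)_1)$ is precisely the mapping cone of the triangle map $D_{0,1}$, and the surgery exact triangle for the triad $\{m(\epsilon_1,0),m(\epsilon_1,1),\epsilon_1\}$ identifies its homology with $\hf$ of the third manifold in the triad, namely $-M_{S,\phi}$. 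For the inductive step, split the hypercube along the last coordinate to write $C(S,\phi)\cong\mathrm{Cone}(f\colon C^{(0)}\to C^{(1)})$, where $C^{(c)}$ is the hypercube for $L_1,\dots,L_{n-1}$ inside the manifold $Y^{(c)}$ obtained by $m(\epsilon_n,c)$-surgery on $L_n$, $f$ is assembled from the $D_{i,i'}$ with $i_n<i'_n$, and the $D_{i,i'}$ with $|i'-i|\geq 2$ supply the chain homotopies making this a genuine decomposition of the hypercube. By induction $H_*(C^{(c)})\cong\hf$ of $Y^{(c)}$ surgered along $L_1,\dots,L_{n-1}$, and one more application of the exact triangle on $L_n$, now inside the manifold already surgered along $L_1,\dots,L_{n-1}$ (which, being disjoint from $L_n$, leaves the triad unchanged), identifies $H_*(C(S,\phi),D)$ with $\hf(-M_{S,\phi})$.

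I expect the genuine work, all of which is carried out in \cite[Section 4]{osz12} and which we are content to quote, to lie in two places. First, producing a single pointed Heegaard multi-diagram subordinate to $L$ with its framing: one $\alpha$-set for $-M_{S,id}$ together with, for each $\gamma_j$, attaching circles realizing the $0$-, $\infty$-, and $\epsilon_j$-surgeries, all isotoped and wound so that every sub-multi-diagram in play is weakly admissible and the relevant polygon moduli spaces are transversely cut out. Second, and this is the main obstacle, verifying the hypercube relation $D\circ D=0$ (so that the maps $D^0,D^1,f,\dots$ really do assemble into an iterated mapping cone): this comes from analyzing the ends of $1$-dimensional moduli spaces of holomorphic $k$-gons, where the combinatorics of the cube meets the associativity properties of the triangle maps, together with the standard invariance arguments showing $H_*(C(S,\phi),D)$ is independent of the auxiliary data (the heights $t_j$, the ordering of the Dehn twists, Heegaard moves, and the almost complex structure). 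For the present paper it suffices to observe that our link and framings satisfy the hypotheses of \cite[Theorem 4.1]{osz12}.
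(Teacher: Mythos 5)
Your proposal is correct and matches the paper's treatment: the paper offers no independent proof of this statement, presenting it as a direct quotation of \cite[Theorem 4.1]{osz12} applied to the framed link $L=\bigcup_{j=1}^n\gamma_j\times\{t_j\}\subset -M_{S,id}$, exactly the reduction you describe. Your identification of the surgery triads, the iterated mapping-cone/induction structure, and the technical points deferred to \cite[Section 4]{osz12} is an accurate account of what that citation entails.
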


There is a grading on $C(S,\phi)$ defined, for $x \in \cf(-\Sigma(L_i))$, by $I(x) = |i|-n_-(\phi)$, where $|i|= i_1+\cdots + i_n$, and $n_-(\phi)$ is the number of left-handed Dehn twists in the composition $\phi$. We refer to this as the ``$I$-grading" or the ``homological grading" on $C(S,\phi)$. This grading induces an ``$I$-filtration" of the complex $(C(S,\phi),D),$ which, in turn, gives rise to a spectral sequence. Let $(E^k_I(S,\phi),D^k_I)$ denote the $(E^k,D^k)$ term of this spectral sequence. The differential $D_I^0$ on the associated graded object $E^0_I(S,\phi)$ is the sum of the standard Heegaard Floer boundary maps $$D_{i,i}:\cf((S,\phi)_i) \rightarrow \cf((S,\phi)_i).$$ Therefore, $E^1_I(S,\phi)$ is isomorphic to $$\bigoplus_{i\in \{0,1\}^n}\hf((S,\phi)_i). $$ 

The vector $i' \in \{0,1\}^n$ is said to be an ``immediate successor" of $i$ if $i'_k > i_k$ for some $k$ and $i'_j = i_j$ for all $j \neq k$. If $i'$ is an immediate successor of $i$, then $(S,\phi)_{i'}$ is obtained from $(S,\phi)_i$ by changing the surgery coefficient on the component $\gamma_k \times \{t_k\}$ from $\infty$ to $0$ or from $0$ to $\infty$. In the first case, $$(D_{i,i'})_*:\hf((S,\phi)_i)\rightarrow \hf((S,\phi)_{i'})$$ is the map induced by the 2-handle cobordism corresponding to $0$-surgery on $\gamma_k \times \{t_k\}$. In the second case, $(D_{i,i'})_*$ is the map induced by the 2-handle cobordism corresponding to $0$-surgery on a meridian of $\gamma_k \times \{t_k\}$.
By construction, the differential $D_I^1$ on $\bigoplus_{i\in \{0,1\}^n}\hf((S,\phi)_i)$ is the sum of the maps $(D_{i,i'})_*$, over all pairs $i,i'$ for which $i'$ is an immediate successor of $i$.

\begin{definition}
\label{def:kh}
The \emph{reduced Khovanov homology} of the open book $(S,\phi)$ is defined to be the graded vector space $E^2_I(S,\phi)$; it is denoted by $\kh(S,\phi)$. 
\end{definition}

The reduced Khovanov homology of a link is a special case of our construction.

\begin{remark}
\label{rmk:braid}
Suppose that $w = \sigma_{l_1}^{\epsilon_1} \cdots \sigma_{l_n}^{\epsilon_n}$ is a word in the generators of the braid group on $m$ strands, with $\epsilon_j \in \{-1,1\}$. For $i \in \{0,1\}^n$, let $(L_w)_i$ denote the link obtained from $L_w$ by taking the $i_j$-resolution (see Figure \ref{fig:Res3}) of the crossing corresponding to $\sigma_{l_j}^{\epsilon_j}$ for each $j=1,\dots, n$. If $m=2k+1$, then $\Sigma(L_w)$ has an open book decomposition given by $(S_{k,1},\phi_w)$, where $\phi_w = D_{\alpha_{l_1}}^{\epsilon_1} \cdots D_{\alpha_{l_n}}^{\epsilon_n}$. By design, the complete resolution $(S_{k,1},\phi_w)_i$ is diffeomorphic to $-\Sigma((L_w)_i)$, and our entire construction of $\kh(S_{k,1},\phi_w)$ is identical to the construction of $\kh(L_w)$ by Ozsv{\'a}th and Szab{\'o} in \cite{osz12}. Moreover, the homological grading on $\kh(S_{k,1},\phi_w)$ agrees with the homological grading on $\kh(L_w)$. 

The same is true for $m=2k+2$. The only difference in this case is that $\Sigma(L_w)$ has an open book decomposition given by $(S_{k,2},\phi_w)$, where $\phi_w = D_{\alpha_{l_1}}^{\epsilon_1} \cdots D_{\alpha_{l_n}}^{\epsilon_n}$ is a composition of Dehn twists around the curves $\alpha_1,\dots, \alpha_{2k+1}$ depicted in Figure \ref{fig:sk2}. 
\end{remark}

\begin{figure}[!htbp]
\begin{center}
\includegraphics[width=7.4cm]{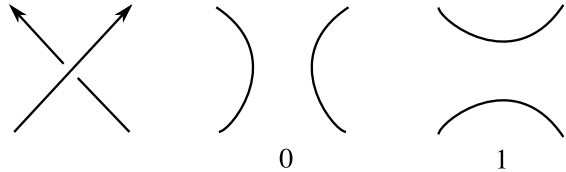}
\caption{\quad Above are the $0$- and $1$-resolutions of a positive crossing. For a negative crossing, the convention is reversed.}
\label{fig:Res3}
\end{center}
\end{figure}

\vfill \eject

\begin{figure}[!htbp]
\begin{center}
\includegraphics[width=7cm]{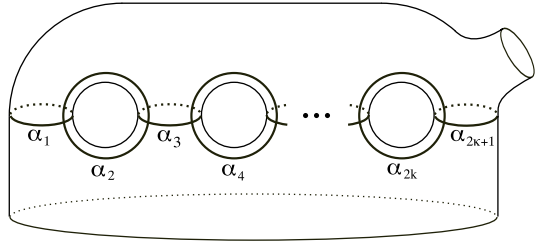}
\caption{\quad The curves $\alpha_ 1,\dots, \alpha_{2k+1}$ on the surface $S_{k,2}$.}
\label{fig:sk2}
\end{center}
\end{figure}

\newpage

\section{Invariance under stabilization}
\label{sec:invc}
Recall that a positive (resp. negative) stabilization of the open book $(S,\phi)$ is an open book $(S',\phi')$, where $S'$ is the union of $S$ with a $1$-handle, and $\phi'$ is the composition of $\phi$ with a right-handed (resp. left-handed) Dehn twist around a curve in $S'$ which intersects the co-core of the $1$-handle exactly once. 

\begin{theorem}
\label{thm:stab}
If $(S',\phi')$ is a stabilization of $(S,\phi)$, then $\kh(S,\phi) \cong \kh(S',\phi')$ as graded vector spaces. 
\end{theorem}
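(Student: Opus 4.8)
The plan is to reduce the statement to two claims: first, that a single positive or negative stabilization can be realized, at the level of the surgery diagrams for the complete resolutions, by a move that introduces one new $\infty$- or $0$-surgered curve in a controlled way; and second, that this local modification of the Heegaard multi-diagram induces a filtered quasi-isomorphism of the complexes $(C(S,\phi),D)$ and $(C(S',\phi'),D)$ that shifts the $I$-grading by the appropriate constant, so that it descends to an isomorphism of the $E^2$ pages. The point is that $\kh$ is built purely from the link-surgeries spectral sequence of Ozsv\'ath--Szab\'o, so any isomorphism of filtered complexes of the type they consider will do.

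First I would fix a stabilization $(S',\phi')$ with $\phi' = \phi \circ D_c^{\pm 1}$, where $c \subset S'$ meets the co-core of the new $1$-handle once. Writing $\phi = D_{\gamma_1}^{\epsilon_1}\cdots D_{\gamma_n}^{\epsilon_n}$, the monodromy $\phi'$ is then a composition of $n+1$ Dehn twists, so $C(S',\phi')$ is a sum over $i \in \{0,1\}^{n+1}$. I would analyze the two complete resolutions of the new last crossing. When the new surgery coefficient is $\infty$ (the resolution that does not cap off the new handle), the $1$-handle can be cancelled and $(S',\phi')_{(i,\ast)} \cong (S,\phi)_i$; when it is $0$, capping the new handle along $c$ yields a surface-and-monodromy pair that is, up to a destabilization/handleslide, again $(S,\phi)$ but with an extra $S^1\times S^2$ connect-summand in one case and an empty contribution in the other. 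Concretely, for a positive stabilization one of the two resolved manifolds is diffeomorphic to $(S,\phi)_i$ and the other to $(S,\phi)_i \# (S^1\times S^2)$ or similar; the upshot is that the $(n+1)$st ``coordinate'' of the cube contributes a two-step mapping-cone that is acyclic after taking homology of the rest, exactly as in the standard Khovanov/Heegaard-Floer destabilization argument (this is the analogue of the Reidemeister~I invariance of Khovanov homology, and of the corresponding statement in \cite{osz12}).

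The key technical step is to promote this resolution-by-resolution picture to a statement about the whole filtered complex. I would use the fact that, in the cube of resolutions, the subcube with $i_{n+1}=0$ and the subcube with $i_{n+1}=1$ are each isomorphic (as filtered complexes, up to an overall grading shift coming from $n_-(\phi)$ versus $n_-(\phi')$) to $C(S,\phi)$ or to a version of it with a trivially-attached summand, and that the edge maps between these two subcubes realize either an identity-type map or a nullhomotopic map depending on the sign of the stabilization. Taking the $E^1$ page, one of the two pieces becomes acyclic (a pair of $\hf$ groups connected by an isomorphism induced by the relevant $2$-handle cobordism — this is where one invokes that the cobordism from $Y$ to $Y\#(S^1\times S^2)$ and its companion assemble into an exact triangle), leaving $E^2(S',\phi') \cong E^2(S,\phi)$; the grading bookkeeping then shows the isomorphism is grading-preserving, since adding a positive (resp. negative) Dehn twist changes $n_-$ by $0$ (resp. $1$) and simultaneously shifts which resolution survives, and the two shifts cancel.

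The main obstacle I expect is the second step: one must check that the stabilization move really does correspond to an admissible enlargement of the Ozsv\'ath--Szab\'o Heegaard multi-diagram — i.e., that after the move the new diagram still computes the link-surgeries spectral sequence and that the identifications of the surviving subcube with $C(S,\phi)$ are filtered chain isomorphisms, not merely diffeomorphisms of the resolved $3$-manifolds. This requires a careful choice of the Heegaard surface and attaching curves near the new $1$-handle (so that handleslides and a destabilization of the Heegaard diagram realize the cancellation), together with the usual naturality/admissibility checks for holomorphic polygon counts. Once that is in place, the acyclicity of one subcube on $E^1$ and the grading computation are routine.
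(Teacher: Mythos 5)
Your proposal follows essentially the same route as the paper: split the cube of resolutions along the new coordinate, observe that after handleslides the new surgery curve $c\times\{t_{n+1}\}$ lies in a separate $S^1\times S^2$ summand of $-M_{S',id}$ unlinked from the rest of the link, identify each resolution $(S',\phi')_i$ as $(S,\phi)_{\underline i}\,\#\,Y$ with $Y\cong S^3$ or $S^1\times S^2$, and cancel the new coordinate; the paper even invokes the same Reidemeister~I analogy. Two corrections are worth making. First, your description of the new edge map on the $E^1$ page is not literally right: it is neither ``identity-type'' nor nullhomotopic, and the two-step mapping cone it defines is not acyclic (if it were, $\kh(S',\phi')$ would vanish; if the edge map were zero, the rank would triple). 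The map is $\hf(S^3)\to\hf(S^1\times S^2)$ for a negative stabilization and $\hf(S^1\times S^2)\to\hf(S^3)$ for a positive one; by Proposition \ref{prop:s1s2} it is injective with one-dimensional cokernel, respectively surjective with one-dimensional kernel. The paper packages this as a tensor-product decomposition $E^1_I(S',\phi')\cong E^1_I(S,\phi)\otimes_{\zzt}C_\pm$ with $H_*(C_\pm)\cong\zzt$, and concludes by the K\"unneth formula over the field $\zzt$. Second, the ``main obstacle'' you anticipate --- promoting diffeomorphisms of the resolved manifolds to filtered chain isomorphisms of the multi-diagram complexes --- does not arise, because the argument is carried out entirely on the $E^1$ page: there the groups are $\hf$ of the resolutions and the differential is a sum of $2$-handle cobordism maps, both of which depend only on the diffeomorphism types of the resolutions and cobordisms and not on any chain-level Heegaard data. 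Your grading bookkeeping (the change in $n_-$ cancels against the change in $|i|$ of the surviving slot) agrees with the paper's.
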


For the proof of Theorem \ref{thm:stab}, we need the following technical result from \cite{osz12}. Recall that for any 3-manifold $Y$, $\hf(Y)$ is a module over the algebra $\wedge^*H_1(Y)/\text{Tors}$ \cite{osz8}. When $Y = \#^l(S^1 \times S^2)$, this module structure is particularly simple. In the following proposition,  $Y_0(K)$ denotes 
the result of the 0-surgery on a knot $K \subset Y$.

\begin{proposition}[\rm{\cite[Proposition 6.1]{osz12}}] 
\label{prop:s1s2}
If $Y = \#^l(S^1 \times S^2)$, then $\hf(Y)$ can be identified with $\wedge^*H_1(Y)$ as an $\wedge^*H_1(Y)$-module. If the knot $K$ represents a circle in one of the $S^1 \times S^2$ summands of $Y$, then $Y' = Y_0(K)$ is diffeomorphic to $\#^{l-1}(S^1 \times S^2)$, and there is a natural identification $$
 \pi: H_1(Y)/[K] \rightarrow H_1(Y'). $$ If $W$ is the corresponding 2-handle cobordism from $Y$ to $Y'$, then the induced map $$F_W:\hf(Y) \rightarrow \hf(Y')$$ is specified by $F_W(\xi) = \pi(\xi).$ Dually, if $K\subset Y$ is an unknot, then $Y'' = Y_0(K)$ is diffeomorphic to $\#^{l+1}(S^1 \times S^2)$, and there is a natural inclusion $$
i: H_1(Y) \rightarrow H_1(Y'').$$ If $W'$ is the corresponding 2-handle cobordism from $Y$ to $Y''$, then the induced map $$F_{W'}:\hf(Y) \rightarrow \hf(Y'')$$ is specified by $F_{W'}(\xi) = i(\xi)\wedge[K''] $ where $[K''] \in H_1(Y'')$ generates the kernel of the map $H_1(Y'') \rightarrow H_1(W').$
\end{proposition}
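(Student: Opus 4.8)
The plan is to reduce all three assertions to explicit model computations for a single $S^1 \times S^2$ summand (or for $S^3$), and then to bootstrap to the general case using the behavior of $\hf$ and of $2$-handle cobordism maps under connected sum. Throughout I work with $\zzt$ coefficients, so signs play no role and it suffices to identify the relevant maps up to the (obvious) nonvanishing of certain holomorphic triangle counts.

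First I would establish the module identification. The base case $\hf(S^1 \times S^2) \cong \wedge^* \zz = \wedge^* H_1(S^1\times S^2)$, with the generator of $H_1$ sending the top generator to the bottom one and annihilating the bottom one, is the standard computation of \cite{osz8} together with the definition of the $H_1$-action given there. The K\"unneth formula for $\hf$ under connected sums then gives $\hf(\#^l(S^1\times S^2)) \cong \hf(S^1\times S^2)^{\otimes l} \cong (\wedge^*\zz)^{\otimes l} \cong \wedge^*(\zz^l) = \wedge^* H_1(Y)$; here I must check that this isomorphism intertwines the $\wedge^* H_1$-module structures, which follows from the compatibility of the $H_1$-action with the connected sum formula. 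Since $\hf$ vanishes in every non-torsion $\Sc$ structure on $\#^l(S^1\times S^2)$, no other summands intervene.

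For the cobordism maps I would exploit the fact that $K$ lies in a single summand (Part 2) or in a ball (Part 3). Writing $Y = (S^1\times S^2)\,\#\,(\#^{l-1}(S^1\times S^2))$ with $K$ in the first factor, the cobordism $W$ is the boundary connected sum of a model cobordism $W_0 : S^1\times S^2 \to S^3$ with the product cobordism on the remaining summands, so the connected sum formula for cobordism maps yields $F_W = F_{W_0}\otimes \mathrm{id}$. It therefore suffices to compute $F_{W_0}$ on $\hf(S^1\times S^2) = \wedge^*\zz$. A direct count of holomorphic triangles in a standard genus-one Heegaard triple subordinate to $W_0$ — or, more economically, the grading shift of $F_{W_0}$ together with the nonvanishing of the map on the top generator and the one-dimensionality of $\hf(S^3)$ — shows that $F_{W_0}$ sends the top generator of $\wedge^*\zz$ to the generator of $\hf(S^3)$ and kills the bottom generator. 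Under the identification of Part 1 this is exactly the map induced by $\pi$ on exterior algebras, and tensoring with the identity recovers $F_W(\xi) = \pi(\xi)$. Part 3 is handled dually: the model cobordism $W_0' : S^3 \to S^1\times S^2$ is computed (again by a model triangle count or by grading and nonvanishing over $\zzt$) to send the generator of $\hf(S^3)$ to the bottom generator $[K'']$ of $\wedge^*\zz$; boundary-connect-summing with the product cobordism on $Y$ gives $F_{W'} = \mathrm{id}\otimes F_{W_0'}$, which under Part 1 is precisely $\xi \mapsto i(\xi)\wedge[K'']$. The diffeomorphism types $Y' \cong \#^{l-1}(S^1\times S^2)$ and $Y'' \cong \#^{l+1}(S^1\times S^2)$, and the descriptions of $\pi$, $i$, and $[K'']$ on $H_1$, are routine Kirby calculus and Mayer--Vietoris computations.

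The main obstacle will be the two model cobordism computations and, more delicately, verifying that all of the structural isomorphisms are mutually compatible: that the K\"unneth isomorphism of Part 1 intertwines the $\wedge^* H_1$-actions, that the connected sum formula for cobordism maps respects these identifications, and that the grading shifts single out exactly the generators claimed. Keeping track of the torsion $\Sc$ structures and the absolute $\QQ$-gradings, so that ``top'' and ``bottom'' generators are unambiguously matched with $\wedge^0$ and $\wedge^l$, is where the care is needed; once $F_{W_0}$ and $F_{W_0'}$ are pinned down, the reassembly via connected sums is formal.
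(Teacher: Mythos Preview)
The paper does not give its own proof of this proposition: it is quoted verbatim as \cite[Proposition 6.1]{osz12} and used as a black box in the proof of Theorem \ref{thm:stab} and elsewhere. There is therefore nothing in this paper to compare your proposal against.

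That said, your outline is essentially the standard argument one finds in the Ozsv\'ath--Szab\'o source: reduce to a single summand via the K\"unneth formula for $\hf$ and the boundary-connected-sum formula for cobordism maps, then compute the two model cobordisms $S^1\times S^2 \to S^3$ and $S^3 \to S^1\times S^2$ directly. Your identification of the delicate points (compatibility of the K\"unneth isomorphism with the $\wedge^* H_1$-action, and pinning down which generator goes where via gradings) is accurate. If you want a self-contained write-up, the place to invest effort is the explicit genus-one Heegaard triple for the model cobordisms, since the grading argument alone does not determine whether the map is zero or nonzero on the relevant generator.
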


Our proof of Theorem \ref{thm:stab} is similar in spirit to the proof that the reduced Khovanov homology of a link is invariant under the first Reidemeister move \cite{kh1}. 

\begin{proof}[Proof of Theorem \ref{thm:stab}] 
Let $\phi =D_{\gamma_1}^{\epsilon_1}\cdots D_{\gamma_n}^{\epsilon_n}$. Suppose that $(S',\phi')$ is obtained from $(S,\phi)$ via stabilization, so that $\phi'=\phi \circ \tau_c$, where $\tau_c$ is a Dehn twist around a curve $c$ which intersects the co-core of the new 1-handle exactly once. Let $$L = \bigcup_{j=1}^n \gamma_j \times \{t_j\}\, \cup \, c\times \{t_{n+1}\}$$ be the framed link in $-M_{S',id}$ corresponding to the open book $(S',\phi').$ Write $$L=L'\,\cup \,c\times \{t_{n+1}\}.$$ There is a decomposition \begin{equation*}\label{eqn:decomp} -M_{S',id}\cong -M_{S,id}\, \# \,(S^1\times S^2)\end{equation*} in which the sublink $L'$ is contained in the $-M_{S,id}$ summand, while the component $c\times \{t_{n+1}\}$ represents a circle in the $S^1\times S^2$ summand ($c \times \{t_{n+1}\}$ does not link $L'$). See Figure \ref{fig:stab} for an example.

\begin{figure}[!htbp]
\begin{center}
\includegraphics[width=9cm]{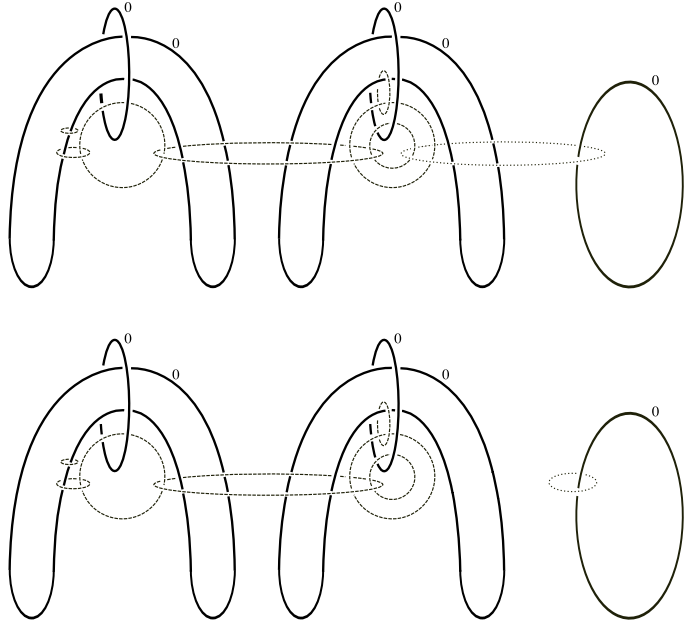}
\caption{\quad The open book in Figure \ref{fig:IdKirby} has a positive stabilization given by $(S_{2,2},D_{\alpha_1}D_{\alpha_2}D_{\alpha_4}D_{\alpha_0}^{-1}D_{\alpha_4}D_{\alpha_1}^{-1}D_{\alpha_3}\circ D_c)$, where $c$ is the curve $\alpha_5\subset S_{2,2}$. The non-solid curves in this diagram represent the link $L$ in $-M_{S_{2,2},id}$ corresponding to this stabilization. The component $c\times \{t_{n+1}\}$ is dotted and the sublink $L'$ is dashed. The bottom figure is obtained from the top figure via handleslides, and represents the decomposition of $-M_{S_{2,2},id}$ into $-M_{S_{2,1},id} \,\#\,(S^1 \times S^2)$. When ``sliding" a link component over a $0$-surgery curve, we are simply taking the connected sum of the component with a ($0$-framed) longitude of the surgery curve. This longitude is an unknot, so the framed link is unchanged by the slide. }
\label{fig:stab}
\end{center}
\end{figure}

Suppose that $(S',\phi')$ is a negative stabilization of $(S,\phi)$. Let $Y_0 \cong S^3$ and $Y_{1} \cong S^1\times S^2$ be the 3-manifolds obtained by performing $0$- and $\infty$-surgeries, respectively, on the component $c \times \{t_{n+1}\} \subset S^1 \times S^2$. Note that $Y_1$ is obtained from $Y_0$ by performing $0$-surgery on a meridian $m_c$ of $c \times \{t_{n+1}\}$. Let $$C_-= \hf(Y_0)\oplus\hf(Y_1)$$ be the complex whose differential $$d:\hf(Y_0) \rightarrow \hf(Y_1)$$ is the map induced by the 2-handle cobordism corresponding to $0$-surgery on $m_c$.

Observe that $L' \subset -M_{S,id}$ is the link associated to the open book $(S,\phi)$. For $i \in \{0,1\}^{n+1}$, let $\underline{i}$ be the vector in $\{0,1\}^{n}$ defined by $\underline i = (i_1,\dots,i_n).$ Then, $$(S',\phi')_i \cong (S,\phi)_{\underline i}\, \# \,Y_{i_n}$$ for all $i \in \{0,1\}^{n+1}$, and the complex $E^1_I(S',\phi')$ decomposes as a tensor product, $$E^1_I(S',\phi') \cong E^1_I(S,\phi) \otimes_{\zzt} C_-.$$ Therefore, \begin{equation}\label{eqn:khneg}\kh(S',\phi') \cong \kh(S,\phi) \otimes_{\zzt} H_*(C_-,d)\end{equation} as a vector space. Using the identification in Proposition \ref{prop:s1s2}, $$C_-\cong \wedge^*H_1(S^3) \oplus \wedge^*H_1(S^1 \times S^2),$$ and the differential $$d:\wedge^*H_1(S^3) \rightarrow \wedge^*H_1(S^1 \times S^2)$$ sends $1$ to a generator of $ \wedge^1H_1(S^1 \times S^2)$, since $m_c$ is an unknot in $Y_0$. Therefore, $H_*(C_-,d)$ is generated by $\wedge^0H_1(S^1 \times S^2) \cong \zzt$. By Equation \ref{eqn:khneg}, $\kh(S',\phi') \cong \kh(S,\phi)$ as vector spaces, and it is clear that this isomorphism preserves the grading.

Now suppose that $(S',\phi')$ is a positive stabilization of $(S,\phi).$ In this case, $Y_0 \cong S^1\times S^1$ and $Y_{1} \cong S^3$ are the 3-manifolds obtained via $\infty$- and $0$-surgeries on $c \times \{t_{n+1}\} \subset S^1 \times S^2$. Let $$C_+= \hf(Y_0)\oplus\hf(Y_1)$$ be the complex whose differential $$d:\hf(Y_0) \rightarrow \hf(Y_1)$$ is the map induced by the 2-handle cobordism corresponding to $0$-surgery on $c \times \{t_{n+1}\}$. As before, $$(S',\phi')_i \cong (S,\phi)_{\underline i}\, \# \,Y_{i_n}$$ for all $i \in \{0,1\}^{n+1}$, and the complex $E^1_I(S',\phi')$ decomposes as a tensor product, $$E^1_I(S',\phi') \cong E^1_I(S,\phi) \otimes_{\zzt} C_+.$$ Using the identification in Proposition \ref{prop:s1s2}, $$C_+\cong \wedge^*H_1(S^1 \times S^2) \oplus\wedge^*H_1(S^3),$$ and the differential $$d:\wedge^*H_1(S^1 \times S^2) \rightarrow \wedge^*H_1(S^3)$$ sends $1$ to $1$, and kills $\wedge^1H_1(S^1 \times S^2)$, since $c \times \{t_{n+1}\}$ is a circle in $Y_0 \cong S^1 \times S^2$. Therefore, $H_*(C_+,d)$ is generated by $\wedge^1H_1(S^1 \times S^2) \cong \zzt$, and $\kh(S',\phi') \cong \kh(S,\phi)$ as in the previous case. Let $\xi$ be a generator of $\wedge^1H_1(S^1 \times S^2)$. The chain map $$\rho:E^1_I(S,\phi) \rightarrow E^1_I(S,\phi)\otimes_{\zzt} C_+$$ defined by sending $x$ to $x\otimes \xi$ induces this isomorphism since $\kh(S',\phi')$ is generated by $E^1_I(S,\phi) \otimes \wedge^1H_1(S^1 \times S^2)$.

\end{proof}

\newpage

\section{The element $\khc(S,\phi)$ and its relationship with $c(S,\phi)$.}
\label{sec:khc}

Suppose that $\phi =D_{\gamma_1}^{\epsilon_1}\cdots D_{\gamma_n}^{\epsilon_n}$, and recall that $$E^1_I(S,\phi) \cong \bigoplus_{i \in \{0,1\}^n} \hf((S,\phi)_i).$$ Let $i_o$ be the vector in $\{0,1\}^n$ defined by $$(i_o)_j=
\left\{\begin{array}{ll}
0, &\text{if $\epsilon_j=1$},\\
1, &\text{if $\epsilon_j=-1.$}
\end{array}\right.$$
Then $(S,\phi)_{i_o}$ is the complete resolution obtained by performing $\infty$-surgery on each curve $\gamma_j \times \{t_j\}$ in $ -M_{S,id}.$ If $S=S_{k,r}$ then $(S,\phi)_{i_o} \cong \#^{2k+r-1} (S^1\times S^2).$ Therefore, we may identify $\hf((S,\phi)_{i_o})$ with $ \wedge^*H_1((S,\phi)_{i_o}),$ by Proposition \ref{prop:s1s2}. Note that $\wedge^{2k+r-1}H_1((S,\phi)_{i_o}) \cong \zzt.$

\begin{definition} $\khct(S,\phi)\in E^1_I(S,\phi)$ is defined to be the generator of $\wedge^{2k+r-1}H_1((S,\phi)_{i_o}).$
\end{definition}

It is not hard to show directly that $\khct(S,\phi)$ is closed in $(E^1_I(S,\phi), D^1_I)$. It is useful, however, to take a slightly more roundabout approach. 

\begin{lemma}
\label{lem:nat} If $(S',\phi')$ is a positive stabilization of $(S,\phi)$, then the chain map $$\rho:E^1_I(S,\phi) \rightarrow E^1_I(S',\phi')$$ defined in the proof of Theorem \ref{thm:stab} sends $\khct(S,\phi)$ to $\khct(S',\phi')$. Recall that $\rho$ induces an isomorphism from $\kh(S,\phi)$ to $\kh(S',\phi')$. 
\end{lemma}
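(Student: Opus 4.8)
The plan is to trace the element $\khct$ through the tensor-product decomposition established in the proof of Theorem~\ref{thm:stab}. Recall that for a positive stabilization we have $E^1_I(S',\phi') \cong E^1_I(S,\phi) \otimes_{\zzt} C_+$, where $C_+ = \hf(Y_0) \oplus \hf(Y_1)$ with $Y_0 \cong S^1 \times S^2$ and $Y_1 \cong S^3$, and $\rho(x) = x \otimes \xi$ with $\xi$ a generator of $\wedge^1 H_1(Y_0) = \wedge^1 H_1(S^1\times S^2)$. The first step is to identify $\khct(S,\phi)$ and $\khct(S',\phi')$ explicitly under this decomposition. Write $S = S_{k,r}$, so $S' = S_{k,r+1}$ (the added $1$-handle raises the number of boundary components by one when $c$ is non-separating in $S'$ in the relevant way; in general $S'$ has first homology of rank one larger). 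The vector $i_o$ for $(S,\phi)$ has $n$ entries; the corresponding vector $i_o'$ for $(S',\phi')$ has $n+1$ entries, and since $\tau_c$ is a right-handed (positive) twist, $(i_o')_{n+1} = 0$. Hence $\underline{i_o'} = i_o$, and the complete resolution $(S',\phi')_{i_o'} \cong (S,\phi)_{i_o} \,\#\, Y_0$, i.e. $\#^{2k+r-1}(S^1\times S^2) \,\#\, (S^1\times S^2) = \#^{2k+r}(S^1\times S^2)$, consistent with $S' = S_{k,r+1}$.

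The heart of the argument is the second step: under the K\"unneth-type identification $\hf\big((S',\phi')_{i_o'}\big) \cong \hf\big((S,\phi)_{i_o}\big) \otimes \hf(Y_0)$, which at the level of exterior algebras is $\wedge^* H_1((S,\phi)_{i_o}) \otimes \wedge^* H_1(Y_0)$, the top exterior power decomposes as $$\wedge^{2k+r}H_1\big((S',\phi')_{i_o'}\big) \cong \wedge^{2k+r-1}H_1\big((S,\phi)_{i_o}\big) \otimes \wedge^1 H_1(Y_0).$$ By definition $\khct(S',\phi')$ generates the left-hand side and $\khct(S,\phi)$ generates the first tensor factor on the right, while $\xi$ generates the second. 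Therefore $\khct(S',\phi') = \khct(S,\phi) \otimes \xi = \rho(\khct(S,\phi))$, which is exactly the claim. The one point requiring care is that the module identification of Proposition~\ref{prop:s1s2} is used consistently on both sides — i.e. that the isomorphism $\hf(\#^l(S^1\times S^2)) \cong \wedge^*H_1$ is compatible with connected sums in the sense that it sends the top class to the top class tensor the top class. This is essentially the statement that the connected-sum formula for $\hf$ of $\#(S^1\times S^2)$'s is the obvious one on exterior algebras, which is standard and which is already implicit in the way Proposition~\ref{prop:s1s2} and the proof of Theorem~\ref{thm:stab} are being applied.

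I expect the main obstacle to be bookkeeping rather than anything deep: one must check that the homology class $[c \times \{t_{n+1}\}]$ in $H_1\big((S',\phi')_{i_o'}\big)$ is precisely the generator of the $\wedge^1 H_1(Y_0)$ factor (so that $\xi$ really is "the extra generator" and the top class of the bigger manifold factors through it), and that no sign or indexing discrepancy arises from the ordering of surgery curves $t_1 < \dots < t_n < t_{n+1}$. Since we work over $\zzt$ there are no signs, so this reduces to confirming that the decomposition $-M_{S',id} \cong -M_{S,id} \,\#\, (S^1\times S^2)$ from the proof of Theorem~\ref{thm:stab} carries $H_1((S,\phi)_{i_o})$ onto the first factor and $[c]$ onto a generator of the second — which is exactly what was verified there (the component $c\times\{t_{n+1}\}$ does not link $L'$). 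Once that is in hand, the identification $\rho(\khct(S,\phi)) = \khct(S',\phi')$ is immediate, and the final sentence of the lemma is just a restatement of the conclusion of Theorem~\ref{thm:stab}.
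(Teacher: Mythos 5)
Your argument is correct and follows essentially the same route as the paper: identify the complete resolution $(S',\phi')_{\overline{i_o}}$ with $(S,\phi)_{i_o}\,\#\,(S^1\times S^2)$, note that the top exterior power of $H_1$ of the connected sum factors as the top class of $(S,\phi)_{i_o}$ wedged with the generator $\xi$ of $\wedge^1H_1(S^1\times S^2)$, and conclude $\rho(\khct(S,\phi))=\khct(S,\phi)\otimes\xi=\khct(S',\phi')$. The extra bookkeeping you flag (that $(i_o')_{n+1}=0$ for a positive stabilization and that $[c\times\{t_{n+1}\}]$ generates the new $H_1$ factor) is exactly what the paper's proof uses implicitly.
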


In particular, if $(S',\phi')$ is a positive stabilization of $(S,\phi)$, then $\khct(S,\phi)$ is closed if and only if $\khct(S',\phi')$ is closed since the map $\rho$ is injective.

\begin{proof}[Proof of Lemma \ref{lem:nat}]
For $i\in \{0,1\}^n$, let $\overline i$ be the vector in $\{0,1\}^{n+1}$ defined by $\overline i = (i_1,\dots,i_n,0)$. Recall that $$C_+  \cong \wedge^*H_1(S^1 \times S^2) \oplus\wedge^*H_1(S^3).$$ Restricted to the summand $\hf((S,\phi)_{{i_o}}) ,$  $$\rho: \hf((S,\phi)_{i_o}) \rightarrow\hf((S,\phi)_{i_o})\otimes_{\zzt} C_+$$ sends $\khct(S,\phi)$ to $\khct(S,\phi) \otimes \xi$, where $\xi$ is a generator of $\wedge^1H_1(S^1\times S^2)$. And, with respect to the identification $$\hf((S,\phi)_{i_o})\otimes_{\zzt} \wedge^*H_1(S^1\times S^2)\cong \hf((S',\phi')_{\bar{i_o}}) ,$$ $\khct(S,\phi) \otimes \xi$ corresponds to $\khct(S',\phi')$. 

\end{proof}

For the rest of this section, we will assume that $S$ has connected boundary unless otherwise specified. Roberts' main observation in \cite{lrob1}, transplanted to our more general setting, is that the binding $B$ of the open book $(S,\phi)$ gives rise to an additional filtration of the complex $(C(S,\phi),D)$. More precisely, $B$ may be incorporated into the Heegaard multi-diagram mentioned in Section \ref{sec:kh} so that the intersection points in the multi-diagram which generate the summands $\cf((S,\phi)_i)$ of $C(S,\phi)$ are each assigned an Alexander grading (as $B$ gives rise to a null-homologous knot $B_i$ in each $(S,\phi)_i$). These Alexander gradings (or ``$A$-gradings" for short) induce a filtration of the group $C(S,\phi)$, which we call the ``$A$-filtration". The differential $D$ is a filtered map with respect to the $A$-filtration since none the curves $\gamma_j \times \{t_j\}$ algebraically link $B$ \cite[Section 8]{osz3}. 

If $C$ is a filtered group with filtration $$\{0\} = \mathcal{F}_{j-l}\subset \ldots \subset\mathcal{F}_{j-1}\subset\mathcal{F}_{j} = C,$$ then we refer to $\mathcal{F}_k$ as ``filtration level $k$". If $d$ is a differential on $C$ which respects this filtration, then there is an induced filtration of homology, $$\{0\} = \mathcal{F}'_{j-l}\subset \ldots \subset\mathcal{F}'_{j-1}\subset\mathcal{F}'_{j} = H_*(C,d),$$ where $\mathcal{F}'_k$ is the set of homology classes which can be represented by cycles in $\mathcal{F}_k$. 


The $A$-filtration of $(C(S,\phi),D)$ gives rise to an obvious filtration of $E^0_I(S,\phi)$, which, in turn, induces an $A$-filtration of each term $E^n_I(S,\phi)$. 

\begin{lemma}
\label{lem:psi}
$\khct(S,\phi)$ is the unique non-zero element of $E^1_I(S,\phi)$ in filtration level $-g(S)$. Moreover, $\khct(S,\phi)$ is closed in $(E^1_I(S,\phi), D^1_I)$.
\end{lemma}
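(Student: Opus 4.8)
The plan is to prove the two assertions in order, using the positive-stabilization naturality of Lemma \ref{lem:nat} to reduce the second assertion to the first (or rather, to reduce the ``closedness'' to a structural fact about where $D^1_I$ can send an element of lowest $A$-filtration level). First I would pin down the $A$-gradings on $E^1_I(S,\phi)$. Recall $E^1_I(S,\phi)\cong\bigoplus_{i\in\{0,1\}^n}\hf((S,\phi)_i)$, and on the distinguished vertex $i_o$ the manifold $(S,\phi)_{i_o}\cong\#^{2g(S)}(S^1\times S^2)$ (since $S$ has connected boundary, $r=1$ and $2k+r-1=2g(S)$), with $\hf\cong\wedge^*H_1$. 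The $A$-filtration restricted to this summand should be exactly the one coming from the exterior algebra grading: the binding $B_{i_o}$ is an unknot in $\#^{2g(S)}(S^1\times S^2)$ and the Alexander filtration on $\hf(\#^{2g}(S^1\times S^2))$ splits it so that $\wedge^p H_1$ sits in filtration level $p-g$, the bottom level $-g$ being $\wedge^0H_1\cong\zzt$ and the top level $+g$ being $\wedge^{2g}H_1\cong\zzt$. (This is the computation underlying Roberts' setup in \cite{lrob1}; the key input is that $B$ links none of the $\gamma_j\times\{t_j\}$, so each $B_i$ is null-homologous and its knot Floer homology of the connect-sum pieces behaves as a tensor product.) The genuinely new point is that for $i\neq i_o$, the summand $\hf((S,\phi)_i)$ contributes nothing in $A$-filtration level $\leq -g(S)$: this is where one uses that $(S,\phi)_i$ is obtained from $(S,\phi)_{i_o}$ by turning some $\infty$-surgeries into $0$-surgeries, which can only \emph{raise} $b_1$-deficiency and shift the Alexander filtration up; more carefully, one checks the minimal $A$-grading of any generator in the $i$-summand strictly exceeds $-g(S)$ unless $i=i_o$. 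Granting this, $\khct(S,\phi)$, being the generator of $\wedge^0H_1((S,\phi)_{i_o})=\wedge^{2g}H_1((S,\phi)_{i_o})$... wait — note $\khct$ is defined as the generator of $\wedge^{2k+r-1}H_1$, i.e.\ the \emph{top} exterior power, which under the identification above sits in $A$-filtration level $+g$, not $-g$; so in fact I should be careful about the convention and instead identify that the \emph{dual} orientation of the Heegaard diagram (we are computing $\hf(-M)$) flips the Alexander grading, placing the top exterior power in the bottom filtration level $-g(S)$. With that convention fixed, $\khct(S,\phi)$ is the unique nonzero element in filtration level $-g(S)$, proving the first assertion.

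For the second assertion, $D^1_I\colon E^1_I(S,\phi)\to E^1_I(S,\phi)$ is a filtered map with respect to the $A$-filtration (since $D$ is, by the linking argument cited from \cite[Section 8]{osz3}, and filtered-ness passes to each spectral-sequence page). Hence $D^1_I(\khct(S,\phi))$ lies in filtration level $-g(S)$, which by the first part is spanned by $\khct(S,\phi)$ itself. So either $D^1_I(\khct)=0$ (done) or $D^1_I(\khct)=\khct$, and the latter is impossible on degree grounds: $D^1_I$ raises the $I$-grading by $1$, while $\khct(S,\phi)$ sits in a single $I$-grading (namely $I=-n_-(\phi)$, corresponding to $i_o$), so $D^1_I(\khct)$ cannot be a nonzero multiple of $\khct$. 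Therefore $\khct(S,\phi)$ is closed. Alternatively, and perhaps more in the spirit of the paragraph preceding the lemma, one can invoke Lemma \ref{lem:nat}: after sufficiently many positive stabilizations the statement reduces to an open book whose closedness is already known (e.g.\ the branched-double-cover case of \cite{pla1, lrob1}), and since $\rho$ is injective and intertwines the differentials, closedness descends; but the grading argument is cleaner and self-contained, so I would present that.

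I expect the main obstacle to be the \textbf{first assertion} — precisely, verifying that no summand $\hf((S,\phi)_i)$ with $i\neq i_o$ meets $A$-filtration level $-g(S)$, and that within the $i_o$-summand the bottom filtration level is exactly $-g(S)$ and is one-dimensional. This requires understanding the Alexander filtration on the knot Floer homology of $B_i\subset(S,\phi)_i$ well enough to control its minimum, which in turn rests on how the binding $B$ was incorporated into the Heegaard multi-diagram and on a Künneth-type splitting of the Alexander filtration under connected sums $ (S,\phi)_i\cong(S,\phi)_{\underline i}\#Y_{i_n}$ of the sort appearing in the proof of Theorem \ref{thm:stab}. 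Once the filtration levels are correctly identified, the closedness (second assertion) is essentially formal — a one-line consequence of filtered-ness plus the $I$-grading count — so almost all the work is in the bookkeeping of the $A$-filtration.
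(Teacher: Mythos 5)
Your overall strategy coincides with the paper's: locate $\khct(S,\phi)$ in the bottom $A$-filtration level $-g(S)$ of the $i_o$-summand, show that no other summand reaches that level, and deduce closedness from the fact that $D^1_I$ is $A$-filtered while strictly raising the $I$-grading. Your closedness argument, granting the first assertion, is essentially identical to the paper's and is fine.

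The genuine gap is precisely the step you flag as ``the main obstacle'' and then leave open: proving that for $i\neq i_o$ the summand $\hf((S,\phi)_i)$ contains no nonzero element in $A$-filtration level $-g(S)$. The mechanism you sketch --- that turning $\infty$-surgeries into $0$-surgeries ``raises $b_1$-deficiency and shifts the Alexander filtration up'' --- is not an argument: $0$-surgery on a page curve can change $b_1$ in either direction, and the minimum of the Alexander grading is not controlled by $b_1$. What the paper actually uses is a genus bound: if $i\neq i_o$, then $(S,\phi)_i$ is obtained by performing $0$-surgery (with the page framing) on at least one homotopically essential curve lying on the fiber, which compresses the fiber surface and forces $g(B_i)<g(S)$; the vanishing $\hfk((S,\phi)_i,B_i,j)=0$ for $|j|>g(B_i)$ then rules out anything in Alexander grading $\leq -g(S)$. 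Without this input neither the uniqueness claim nor the closedness conclusion is supported. A secondary, smaller issue: you correctly notice the convention question of whether the generator of the top exterior power $\wedge^{2k}H_1((S,\phi)_{i_o})$ sits in Alexander grading $+g(S)$ or $-g(S)$, but resolving it by an appeal to ``the dual orientation flipping the grading'' is ad hoc; the paper pins this down by citing the identification $\hfk((S,\phi)_{i_o},B_{i_o},j)\cong\wedge^{k-j}H_1((S,\phi)_{i_o})$ from \cite{osz3}, which places $\wedge^{2k}$ at $j=-k$. This is not cosmetic: if $\khct(S,\phi)$ sat at $+g(S)$ instead, both parts of the lemma would fail to follow from your argument.
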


Every open book can be transformed into an open book with connected binding after a sequence of positive stabilizations. Therefore, Lemmas \ref{lem:nat} and \ref{lem:psi} imply that $\khct(S,\phi)$ is closed for any $S$.

\begin{proof}[Proof of Lemma \ref{lem:psi}] Let $S = S_{k,1}$. Then, $\khct(S,\phi)$ is the generator of $\wedge^{2k}H_1((S,\phi)_{i_o}).$ The natural isomorphism between $\hf((S,\phi)_{i_o})$ and $\hfk((S,\phi)_{i_o},B_{i_o})$ gives an identification of $\hfk((S,\phi)_{i_o},B_{i_o},j)$ with the group $\wedge^{k-j}H_1((S,\phi)_{i_o})$ \cite{osz3}. As a result, $\khct(S,\phi)$ corresponds to the generator of $\hfk((S,\phi)_{i_o},B_{i_o},-k),$ and is therefore a non-zero element of $E^1_I(S,\phi)$ in filtration level $-k=-g(S).$ 

If $i \neq i_o$ then $(S,\phi)_i$ is obtained from $M_{S,id}$ by performing $0$-surgery on at least one of the curves $\gamma_j \times \{t_j\}$. Therefore, $g(B_i)<g(S)$, and, hence, $\hfk((S,\phi)_i,B_i,j)$ vanishes for $j \leq -g(S).$ As a result, there is no non-zero element of $\hf((S,\phi)_i)$ in $A$-filtration level $-g(S)$. Since the differential $D^1_I$ is a filtered map with respect to the $A$-filtration of $E^1_I(S,\phi),$ it follows that $D^1_I(\khct(S,\phi)) = 0$.

\end{proof}

\begin{definition}
\label{def:khc} For any open book $(S,\phi)$, we define $\khc(S,\phi)$ to be the image of $\widetilde{\khc}(S,\phi)$ in $E^2_I(S,\phi)=\kh(S,\phi)$.
\end{definition}

Note that $\khc(S,\phi)$ is contained in homological grading $0$. Below, we show that $\khc$ vanishes for negative stabilizations.

\begin{lemma}\label{lem:negstab}
Let $(S,\phi)$ be any open book, and suppose that $(S',\phi')$ is a negative stabilization of $(S,\phi)$. Then $\khc(S',\phi')=0$.\end{lemma}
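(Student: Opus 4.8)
The plan is to mimic the structure already established for positive stabilization, but track where the distinguished element goes. Recall from the proof of Theorem~\ref{thm:stab} that for a negative stabilization $(S',\phi')$, the complex $E^1_I(S',\phi')$ decomposes as $E^1_I(S,\phi)\otimes_{\zzt} C_-$, where $C_- = \hf(Y_0)\oplus\hf(Y_1)$ with $Y_0\cong S^3$, $Y_1\cong S^1\times S^2$, and the differential $d$ sends $1\in\wedge^*H_1(S^3)$ to a generator of $\wedge^1H_1(S^1\times S^2)$. First I would identify, under this tensor decomposition, precisely which summand of $E^1_I(S',\phi')$ contains $\khct(S',\phi')$: writing $\phi' = \phi\circ\tau_c$ with $\tau_c$ a \emph{left}-handed Dehn twist, the index vector $i_o'$ for $(S',\phi')$ has last coordinate $1$ (since $\epsilon_{n+1}=-1$), which is exactly the coordinate that produces the $Y_1 = S^1\times S^2$ factor. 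So $(S',\phi')_{i_o'} \cong (S,\phi)_{i_o}\,\#\,(S^1\times S^2)$, consistent with $\#^{2k+r}(S^1\times S^2)$, and $\khct(S',\phi')$ is the top exterior power generator, which under the Künneth identification is $\khct(S,\phi)\otimes\eta$ where $\eta$ generates $\wedge^1 H_1(S^1\times S^2)$.

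Next I would observe that this element $\khct(S,\phi)\otimes\eta$, while closed (by Lemma~\ref{lem:psi} applied to $(S',\phi')$, or after further positive stabilization to connected binding), is a \emph{boundary} in $(E^1_I(S',\phi'), D^1_I)$. Indeed, $H_*(C_-,d)$ is generated by $\wedge^0 H_1(S^1\times S^2)\cong\zzt$ sitting in the $Y_1$ factor, so the chain $\eta\in\wedge^1H_1(S^1\times S^2)\subset\hf(Y_1)$ is null-homologous in $C_-$: it is not in the image of $d$, but rather $\eta$ itself... — here I need to be careful about the direction. The differential $d$ goes $\hf(Y_0)\to\hf(Y_1)$ and sends $1\mapsto\eta$; hence $\eta = d(1)$ is a boundary in $C_-$. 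Therefore $\khct(S,\phi)\otimes\eta = (\mathrm{id}\otimes d)(\khct(S,\phi)\otimes 1)$, and since $\khct(S,\phi)\otimes 1$ lives in $E^1_I(S',\phi')$ in homological grading $-1$ (it sits in the $Y_0 = S^3$ summand, with $(i')_{n+1} = 0$, contributing one less to $|i'|$ but with $n_-(\phi')=n_-(\phi)+1$), the element $\khct(S',\phi')$ is the $D^1_I$-image of a class one homological grading lower. I would want to check that $D^1_I$ restricted appropriately is exactly $\mathrm{id}\otimes d$ on the relevant piece — this follows because $c\times\{t_{n+1}\}$ does not link $L'$, so the only contribution to the differential increasing the last coordinate is the cobordism map computed by Proposition~\ref{prop:s1s2}, which is precisely $d$.

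Passing to homology, $\khc(S',\phi')$ is by definition the image of $\khct(S',\phi')$ in $E^2_I(S',\phi') = \kh(S',\phi')$; since $\khct(S',\phi')$ is a $D^1_I$-boundary, this image is zero. That completes the argument. The main obstacle I anticipate is the bookkeeping of homological ($I$-)gradings and the orientation/handedness conventions: one must confirm that for a \emph{negative} stabilization the new surgery coefficient places $\khct(S',\phi')$ in the $S^1\times S^2$ resolution (the $i_{n+1}=1$ slot) rather than the $S^3$ one, which is the mirror of the positive-stabilization situation handled in Lemma~\ref{lem:nat}; getting this reversal right is the crux, and everything else is a direct transcription of the tensor-product computation from the proof of Theorem~\ref{thm:stab}. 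A secondary point to verify carefully is that the element $\khct(S,\phi)\otimes 1$ (the preimage) genuinely lies in $E^1_I(S',\phi')$ and is not itself required to be closed — it need not be, since we only need $D^1_I$ of it to equal $\khct(S',\phi')$ modulo lower-order pieces of the filtration, which is automatic from the product structure.
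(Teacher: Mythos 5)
Your argument is essentially the paper's: the same tensor decomposition $E^1_I(S',\phi')\cong E^1_I(S,\phi)\otimes_{\zzt}C_-$, the same identification of $\khct(S',\phi')$ with $\khct(S,\phi)\otimes\xi$ in the $i_{n+1}=1$ (i.e.\ $Y_1=S^1\times S^2$) slot, and the same observation that this element is the $D^1_I$-boundary of $\khct(S,\phi)\otimes 1$. The one place your justification goes astray is the closing remark that $\khct(S,\phi)\otimes 1$ need not be closed because its boundary only has to agree with $\khct(S',\phi')$ ``modulo lower-order pieces of the filtration, which is automatic from the product structure.'' On the $E^1$ page the differential $D^1_I$ is homogeneous of degree $+1$ in the $I$-grading and $E^2_I=\ker D^1_I/\operatorname{im}D^1_I$, so you need $\khct(S',\phi')$ to lie \emph{exactly} in the image. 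The full boundary is
$$D^1_I\bigl(\khct(S,\phi)\otimes 1\bigr)=D^1_I\bigl(\khct(S,\phi)\bigr)\otimes 1+\khct(S,\phi)\otimes\xi,$$
and the first term sits in the \emph{same} $I$-grading (namely $0$) as the second, merely in the $\hf(Y_0)$ tensor factor rather than $\hf(Y_1)$; there is no filtration level to discard it into. It must genuinely vanish, and it does, because $\khct(S,\phi)$ is closed in $(E^1_I(S,\phi),D^1_I)$ --- Lemma \ref{lem:psi}, extended to disconnected binding via Lemma \ref{lem:nat}. This is exactly the fact the paper's proof invokes at this step (you cite Lemma \ref{lem:psi} only for the closedness of $\khct(S',\phi')$, which is not what is needed here). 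With that one-line correction your proof coincides with the paper's.
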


\begin{proof}[Proof of Lemma \ref{lem:negstab}]
As in the proof of Theorem \ref{thm:stab}, $$E^1_I(S',\phi') \cong E^1_I(S,\phi) \otimes_{\zzt} C_-,$$ where $C_-$ is the complex $$C_-\cong \wedge^*H_1(S^3) \oplus \wedge^*H_1(S^1 \times S^2),$$ whose differential $$d:\wedge^*H_1(S^3) \rightarrow \wedge^*H_1(S^1 \times S^2)$$ sends $1$ to a generator $\xi$ of $ \wedge^1H_1(S^1 \times S^2).$ Under this identification, the summand $\hf((S',\phi')_{i_o})$ is identified with $$\hf((S,\phi)_{\underline{i_o}})\otimes_{\zzt} \wedge^*H_1(S^1\times S^2),$$ and $\khct(S',\phi')$ corresponds to $\khct(S,\phi)\otimes \xi$. Since $\khct(S,\phi)$ is closed in $E^1_I(S,\phi)$,  $\khct(S,\phi)\otimes \xi$ is the boundary of $\khct(S,\phi)\otimes 1$ in this complex. Therefore, $\khc(S',\phi')=0$.

\end{proof}

\begin{remark}
By Theorem \ref{thm:stab} and Lemma \ref{lem:nat}, it is enough to prove Theorem \ref{thm:tight} for open books with connected binding.
\end{remark}

\begin{remark}It is not hard to see directly that the invariant of the transverse $(2k+1)$-braid $L_w$ defined by the second author in \cite{pla1} is precisely the class $\khc(S_{k,1},\phi_w)$ in $ \kh(S_k,\phi_w)\cong \kh(L_w).$ This also follows from Lemma \ref{lem:psi} combined with Roberts' characterization of $\khc(L_w)$ in \cite{lrob1}.
\end{remark}



Let us denote by $E^n_A(S,\phi)$ the $E^n$ term of the spectral sequence associated to the $A$-filtration of $(C(S,\phi),D)$. In \cite{lrob1}, Roberts (effectively) proves that, for $n \geq 1$, $E^n_A(S,\phi)$ is isomorphic as a graded group to the $E^n$ term of the spectral sequence associated to the Alexander filtration of $\cf(-M_{S,\phi})$ induced by $B$ \cite[Lemma 7]{lrob1}. We may therefore view the contact invariant $c(S,\phi)$ as the unique generator of $E^{\infty}_A(S,\phi)$ in $A$-grading $-g(S)$ \cite{osz1}. Equivalently, $c(S,\phi)$ is the generator of the $A=-g(S)$ filtration level of $H_*(C(S,\phi),D)$.

In order to make the relationship between $\khc(S,\phi)$ and $c(S,\phi)$ more transparent, we provide a short review of the ``cancellation lemma," and describe how it is used to compute spectral sequences.

\begin{lemma}[\rm{see \cite[Lemma 5.1]{ras}}] 
\label{lem:cancel}
Suppose that $(C,d)$ is a complex over $\zzt$, freely generated by elements $x_i,$ and let $d(x_i,x_j)$ be the coefficient of $x_j$ in $d(x_i)$. If $d(x_k,x_l)=1,$ then the complex $(C',d')$ with generators $\{x_i| i \neq k,l\}$ and differential $$d'(x_i) = d(x_i) + d(x_i,x_l)d(x_k)$$ is chain homotopy equivalent to $(C,d)$. The chain homotopy equivalence is induced by the projection $\pi:C\rightarrow C'$, while the equivalence $\iota: C'\rightarrow C$ is given by $\iota(x_i) = x_i + d(x_i,x_l)x_k$.
\end{lemma}

 We say that $(C',d')$ is obtained from $(C,d)$ by ``canceling" the component of the differential $d$ from $x_k$ to $x_l$. Lemma \ref{lem:cancel} admits a refinement for filtered complexes. In particular, suppose that there is a grading on $C$ which induces a filtration of the complex $(C,d)$, and let the elements $x_i$ be homogeneous generators of $C$. If $d(x_k,x_l) = 1$, and $x_k$ and $x_l$ have the same grading, then the complex obtained by canceling the component of $d$ from $x_k$ to $x_l$ is \emph{filtered} chain homotopy equivalent to $(C,d)$ since both $\pi$ and $\iota$ are filtered maps in this case. 
 
Computing the spectral sequence associated to such a filtration is the process of performing cancellation in a series of stages until we arrive at a complex in which the differential is zero (the $E^{\infty}$ term). The $E^n$ term records the result of this cancellation after the $n$th stage. Specifically, the $E^0$ term is simply the graded vector space $C = \bigoplus C_i$. The $E^1$ term is the graded vector space $C^{(1)}$, where $(C^{(1)},d^{(1)})$ is obtained from $(C,d)$ by canceling the components of $d$ which do not shift the grading. For $n>1$, the $E^n$ term is the graded vector space $C^{(n)}$, where $(C^{(n)},d^{(n)})$ is obtained from $(C^{(n-1)},d^{(n-1)})$ by canceling the components of $d^{(n-1)}$ which shift the grading by $n-1$. See Figure \ref{fig:complex} for an illustration of this process (in this diagram, the generators are represented by dots and the components of the differential are represented by arrows).

\begin{figure}[!htbp]
\begin{center}
\includegraphics[width=12.7cm]{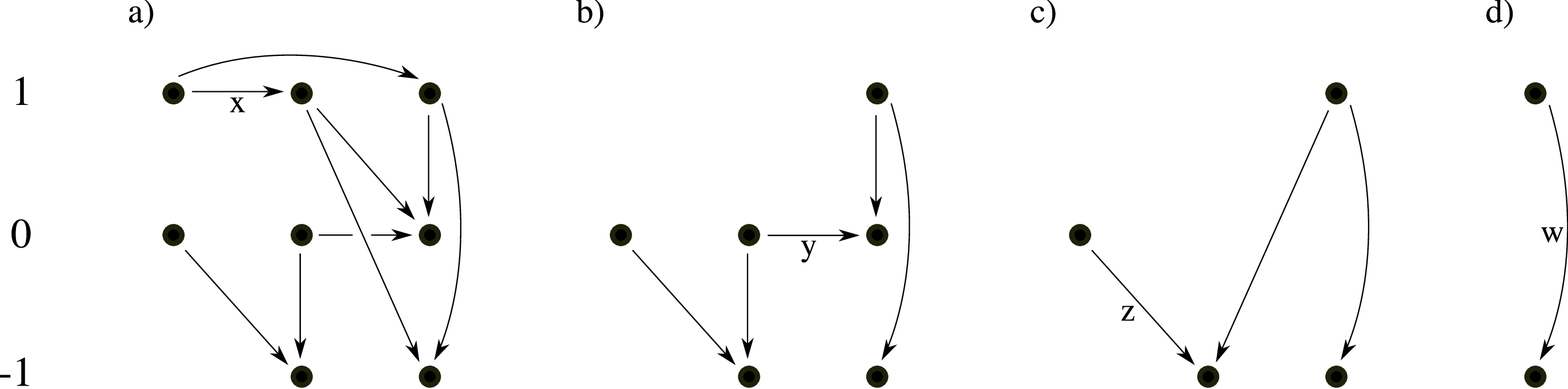}
\caption{\quad The diagram in $a)$ represents a graded complex $C$, where the grading of a generator is given by $1$, $0$, or $-1$. This grading induces a filtration $\mathcal{F}_{-1} \subset \mathcal{F}_0 \subset \mathcal{F}_1 = C$. The complex in $b)$ is obtained from that in $a)$ by canceling the component $x$ of the differential. The complex in $c)$ is obtained from that in $b)$ by canceling $y$. This graded vector space represents the $E^1$ term of the spectral sequence associated to the filtration of $C$. The complex in $d)$ is obtained from that in $c)$ by canceling $z$, and it represents the $E^2$ term of the spectral sequence. The $E^3 = E^{\infty}$ term of the spectral sequence is trivial, and is obtained from the complex in $d)$ by canceling $w$.}
\label{fig:complex}
\end{center}
\end{figure}

Let us now apply the cancellation lemma to the complex $(C(S,\phi),D)$. As mentioned above, $(C(S,\phi),D)$ is generated by the intersection points of the Heegaard multi-diagram from Section \ref{sec:kh}. These generators are homogeneous with respect to both the $I$-grading and the $A$-grading on $C(S,\phi)$. Canceling all components of $D$ between generators in the same $(I,A)$-bi-grading, we obtain a complex $(C(S,\phi)',D')$ which is bi-filtered chain homotopy equivalent to $(C(S,\phi),D)$ (since $\pi$ and $\iota$ are both bi-filtered maps). Let us denote by $E_I^n(S,\phi)'$ and $E^n_A(S,\phi)'$ the $E^n$ terms of the spectral sequences associated to the $I$- and $A$-filtrations of $(C(S,\phi)',D')$ (these terms are isomorphic as graded groups to $E^n_I(S,\phi)$ and $E^n_A(S,\phi)$).

As a bi-filtered group, $C(S,\phi)'$ is clearly isomorphic to $$\bigoplus_{i\in\{0,1\}^n} \hfk((S,\phi)_i,B_i).$$ Consequently, the generator $c$ of $\hfk((S,\phi)_{i_o}, B_{i_o},-g(S))$ is the unique non-zero element of $C(S,\phi)'$ in $A$-filtration level $-g(S)$ (see the proof of Lemma \ref{lem:psi}). Thus, according to Definition \ref{def:khc}, the element $\khc(S,\phi)$ may be viewed as the image of $c$ in $E^2_I(S,\phi)'\cong \kh(S,\phi).$ Similarly, the contact invariant $c(S,\phi)$ may be viewed as the image of $c$ in $H_*(C(S,\phi)',D') \cong \hf (-M_{S,\phi}).$ \emph{This} is the true sense in which $\khc(S,\phi)$ ``corresponds" to $c(S,\phi)$.

In this context, Theorem \ref{thm:tight} boils down to the statement that if the spectral sequence associated to the $I$-filtration of $(C(S,\phi)',D')$ collapses at $E^2_I(S,\phi)'$ and the image of $c$ in $E^2_I(S,\phi)'$ is non-zero, then the image of $c$ in $H_*(C(S,\phi)',D')$ is non-zero. 

\begin{proof}[Proof of Theorem \ref{thm:tight}]
Let us assume that the spectral sequence associated to the $I$-filtration of $(C(S,\phi)',D')$ collapses at $E^2_I(S,\phi)'$, and suppose that $c$ is exact in $(C(S,\phi)',D')$. Cancel all components of $D'$ between generators in the same $I$-grading to obtain a new complex $(C(S,\phi)'',D'')$, where $C(S,\phi)'' = E^1_I(S,\phi)'$. Note that there is no component of $D'$ between two generators of $\hfk((S,\phi)_{i_o}, B_{i_o})$ since this group is isomorphic to $\hf((S,\phi)_{i_o})$. Therefore, $c$ represents an exact element of $(C(S,\phi)'',D'')$. 

Now, cancel all components of $D''$ from generators $x_k$ to $x_l$, where $x_l \neq c$ and $I(x_l) = I(x_k)+1$. The element $c$ remains exact in the resulting complex $(C(S,\phi)''',D''')$, and any component of $D'''$ which shifts the $I$-grading by $1$ must map to $c$. If such a component exists, then the image of $c$ is zero in the group $E^2_I(S,\phi)'$ (which is obtained from $C(S,\phi)'''$ by canceling this component), and we are done. If no such component exists, then $C(S,\phi)''' = E^2_I(S,\phi)'$, which implies that $D'''$ is trivial since we are assuming that the spectral sequence collapses at $E^2_I(S,\phi)'$. But this contradicts the fact that $c$ is exact in $(C(S,\phi)''',D''')$. 

\end{proof}

\begin{remark}
\label{rmk:corr} One should not read too much into the ``correspondence" between $\khc(S,\phi)$ and $c(S,\phi)$. For instance, it is possible that $\khc(S,\phi) = 0$ while $c(S,\phi) \neq 0$, perhaps even when the spectral sequence associated to the $I$-filtration of $(C(S,\phi)',D')$ collapses at $E^2_I(S,\phi)'$. An example of this sort of phenomenon is given by the model complex $(C,d)$ generated by the three elements $x_{0,0}$, $y_{1,-2}$, $z_{2,-1}$, with $dx=y+z$ (here, the subscripts indicate the $(I,A)$-bi-grading). The element $y$ in the lowest $A$-grading plays the role of $c$. The $E^1$ term of the spectral sequence associated to the $I$-filtration of $(C,d)$ is generated by the elements $x$, $y$, $z$ as well, and $d^1(x) = y$ (think $\khc(S,\phi)=0$). Yet, $y$ is not a boundary in $(C,d)$ (think $c(S,\phi) \neq 0$). Moreover, this spectral sequence collapses at the $E^2$ term.
\end{remark}


The hypotheses in Proposition \ref{prop:vanishing} are designed to avoid the type of phenomenon described in Remark \ref{rmk:corr}. Applied to the complex $(C(S,\phi)',D')$, Proposition \ref{prop:vanishing} says that if the image of $c$ in $E^2_I(S,\phi)'$ is zero and $E^2_I(S,\phi)'$ is supported in non-positive $I$-gradings, then the image of $c$ in $H_*(C(S,\phi)',D')$ is zero.

\begin{proof}[Proof of Proposition \ref{prop:vanishing}]
Suppose that $\phi =D_{\gamma_1}^{\epsilon_1}\cdots D_{\gamma_n}^{\epsilon_n}$, and let $K = n-n_-(\phi)$. If $\mathcal{F}_j$ is the subset of $C(S,\phi)'$ generated by homogeneous elements with $I$-gradings $\geq j$, then $$\{0\} = \mathcal{F}_{K+1} \subset  \mathcal{F}_{K} \subset \dots \subset  \mathcal{F}_{-n_-(\phi)} = C(S,\phi)'.$$
Let us assume that $E^2_I(S,\phi)'$ is supported in non-positive $I$-gradings. If the image of $c$ in $E^2_I(S,\phi)'$ is zero, then there must exist some $y$ with $I(y)=-1$ such that $D'(y) = c + x$, where $x \in \mathcal{F}_1.$ Let $k$ be the greatest integer for which there exists some $y'$ such that $D'(y') = c + x'$, where $x' \in \mathcal{F}_k$. We will show that $k=K+1$, which implies that $x'=0$, and, hence, that $c$ is a boundary in $(C(S,\phi)',D')$.

Suppose, for a contradiction, that $k<K+1$. Write $x' = x_k + x''$, where $I(x_k) = k$, and $x'' \in \mathcal{F}_{k+1}$. Note that $D'(x_k+x'')=0$ as $x' = x_k + x''$ is homologous to $c$. Since every component of $D'$ shifts the $I$-grading by at least $1$, it follows that $D'(x'') \in \mathcal{F}_{k+2}.$ But this implies that $D'(x_k) \in  \mathcal{F}_{k+2}$ as well, since $D'(x_k + x'') = 0$. Therefore, $x_k$ represents a cycle in $(E^1_I(S,\phi)',(D'_I)^1)$. Since $k\geq 1$ and $E^2_I(S,\phi)'$ is supported in non-positive $I$-gradings, it must be that $x_k$ is also a boundary in $(E^1_I(S,\phi)',(D'_I)^1)$. That is, there is some $y''$ with $I(y'') = k-1$ such that $D'(y'') = x_k + x'''$, where $x''' \in \mathcal{F}_{k+1}$. But then, $D'(y'+y'') = c+(x'' + x''')$, and the fact that $x'' + x'''$ is contained in $\mathcal{F}_{k+1}$ contradicts our earlier assumption on the maximality of $k$.

\end{proof}
\newpage 

\section{The computability of $\kh(S,\phi)$ and $\khc(S,\phi)$}
\label{sec:compute}
As was mentioned in the introduction, the mapping class group of $S_{k,1}$ is generated by Dehn twists around the curves $\alpha_0, \dots, \alpha_{2k}$ depicted in Figure \ref{fig:genset}. In this section, we show that $\kh(S_{k,1},\phi)$ and $\khc(S_{k,1},\phi)$ are combinatorially computable when $\phi$ is a composition of Dehn twists around the curves shown in Figure \ref{fig:genset3}, which include $\alpha_0, \dots, \alpha_{2k}$. 

According to Proposition \ref{prop:s1s2}, if $Y = \#^l (S^1 \times S^2)$, then the map on Heegaard Floer homology induced by a 2-handle cobordism corresponding to $0$-surgery on an unknot in $Y$ or on a circle in an $S^1 \times S^2$ summand of $Y$ depends only on homological data. The combinatoriality of $\kh(S_{k,1},\phi)$ and $\khc(S_{k,1},\phi)$ therefore follows directly from the lemma below.

\begin{figure}[!htbp]
\begin{center}
\includegraphics[width=7cm]{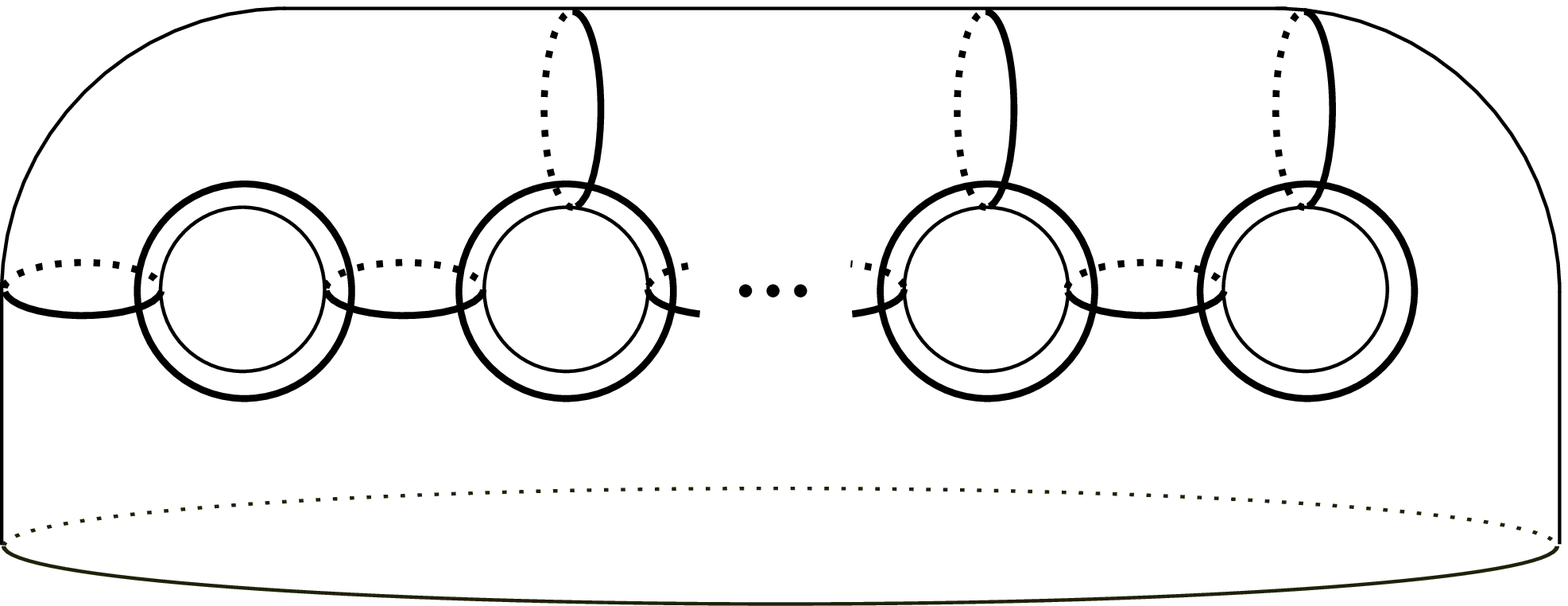}
\caption{}
\label{fig:genset3}
\end{center}
\end{figure}

\begin{lemma}
\label{lem:ress1s2}
Suppose that $\phi$ is a composition of Dehn twists around the curves depicted in Figure \ref{fig:genset3}. Then, each $(S_{k,1},\phi)_i$ is diffeomorphic to $\#^l(S^1 \times S^2)$ for some $l$. Moreover, if $i'$ is an immediate successor of $i$, then the map $$(D_{i,i'})_*:\hf((S_{k,1},\phi)_i)\rightarrow \hf((S_{k,1},\phi)_{i'})$$ is induced by a 2-handle cobordism corresponding to $0$-surgery on either an unknot in $(S_{k,1},\phi)_i$ or on a circle in one of the $S^1\times S^2$ summands of $(S_{k,1},\phi)_i$.
\end{lemma}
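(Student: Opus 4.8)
The plan is to reduce everything to a local, combinatorial analysis of each surgery curve $\gamma_j \times \{t_j\}$ inside a complete resolution. The key point is that each curve $\alpha_m$ in Figure \ref{fig:genset3} lies on the fiber surface $S_{k,1}$, so each $\gamma_j \times \{t_j\}$ together with its surface framing sits in the standard surgery diagram for $-M_{S_{k,1},id}$ (as in Figure \ref{fig:IdKirby}), in which a neighborhood of the fiber is drawn with the binding as a $0$-framed unknot and each $1$-handle of $S_{k,1}$ recorded by a pair of $0$-framed Hopf-linked circles. First I would observe that performing $0$- or $\infty$-surgery on $\gamma_j \times \{t_j\}$ amounts, in this diagram, to either deleting a $1$-handle of the surface or band-summing two of its $1$-handles, so each complete resolution $(S_{k,1},\phi)_i$ is obtained from $-M_{S_{k,1},id} \cong \#^{2k}(S^1\times S^2)$ by a sequence of handleslides and cancellations of Hopf pairs, and is therefore diffeomorphic to $\#^l(S^1\times S^2)$ for some $l$ depending on $i$. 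This is essentially the same bookkeeping already used in the proof of Theorem \ref{thm:stab} and illustrated in Figure \ref{fig:stab}; I would present it as a Kirby-calculus lemma about the specific diagrams arising from curves of the type in Figure \ref{fig:genset3}.

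Next, for the statement about the maps, I would fix $i$ and an immediate successor $i'$, differing only in the $k$th coordinate, and analyze the curve $\gamma_k \times \{t_k\}$ inside $(S_{k,1},\phi)_i$ (or, in the ``$0\to\infty$'' case, a meridian of it, per the discussion preceding Definition \ref{def:kh}). Using the reductions above, one can handleslide $(S_{k,1},\phi)_i$ into standard form $\#^l(S^1\times S^2)$ while tracking $\gamma_k\times\{t_k\}$; because $\alpha_k$ is a simple closed curve on the fiber, after these slides the tracked curve becomes either an unknot unlinked from everything (when the relevant $1$-handles have been ``used up'' so that $\gamma_k$ bounds a disk) or a curve representing a generator of $H_1$ of one $S^1\times S^2$ summand (when it still runs over a surviving handle exactly once). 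In either case the $2$-handle cobordism for $0$-surgery on it is of one of the two types covered by Proposition \ref{prop:s1s2}, and by that proposition the induced map $(D_{i,i'})_*$ depends only on the homological data, hence is combinatorially computable. The case where $i'$ changes a coefficient from $0$ to $\infty$ is handled identically after replacing $\gamma_k\times\{t_k\}$ by its meridian, which is visibly an unknot in the diagram.

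The main obstacle I expect is the middle step: verifying that after the Kirby moves the tracked curve $\gamma_k\times\{t_k\}$ really is always an unknot or a core circle of an $S^1\times S^2$ summand, rather than something more complicated (e.g.\ a curve that links other surgery curves). This requires a careful case analysis of how the curves in Figure \ref{fig:genset3} intersect and of the order in which $0$- and $\infty$-surgeries are performed; the geometric content is that these curves all lie on the page and pairwise intersect in the simple pattern of that figure, so band-summing and deleting handles keeps everything in ``standard position.'' I would organize this as an induction on the number of $0$-resolved coordinates, maintaining as an inductive hypothesis that the partially resolved diagram is a standard $\#^l(S^1\times S^2)$ diagram in which every not-yet-resolved $\gamma_j\times\{t_j\}$ is a simple closed curve lying on a visible subsurface. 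Granting that inductive invariant, the conclusion follows from Proposition \ref{prop:s1s2} as above, and the combinatoriality of $\kh(S_{k,1},\phi)$ and $\khc(S_{k,1},\phi)$ is immediate since every differential $D^1_I$ and every identification $\hf((S_{k,1},\phi)_{i_o})\cong\wedge^*H_1$ is then given by explicit homological formulas.
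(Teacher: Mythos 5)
Your overall strategy---reduce the complete-resolution diagram to a standard $\#^l(S^1\times S^2)$ picture by Kirby calculus, track the distinguished curve (or its meridian) through the moves, and then invoke Proposition \ref{prop:s1s2} so that the maps are determined by homological data---is exactly the strategy of the paper's proof. The difference is that the paper actually executes the Kirby-calculus step, by first putting the diagram into an explicit normal form and then running a concrete reduction algorithm, whereas you defer it (``granting that inductive invariant, the conclusion follows'').

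The gap is in that deferred step, and the invariant you propose to carry the induction is not correct as stated. You ask that the partially resolved diagram remain a standard $\#^l(S^1\times S^2)$ diagram ``in which every not-yet-resolved $\gamma_j\times\{t_j\}$ is a simple closed curve lying on a visible subsurface.'' But already in $-M_{S_{k,1},id}$, two curves $\gamma_a\times\{t_a\}$ and $\gamma_b\times\{t_b\}$ with $\gamma_a\cap\gamma_b\neq\emptyset$ and $t_a\neq t_b$ form a Hopf-link configuration: they sit on different parallel copies of the page and link once for each intersection point of $\gamma_a$ with $\gamma_b$. So the resolution curves do not all lie on one visible copy of the page, and ``$0$-surgery on one of them'' is not merely ``delete a $1$-handle or band-sum two $1$-handles of the fiber''---it interacts with every other surgery curve linking it. This mutual linking among the resolution curves (not just their interaction with the $1$-handles of $S_{k,1}$) is the entire combinatorial content of the lemma. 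The paper handles it by showing that the full complete-resolution diagram decomposes, after isotopy, into blocks of concentric circles together with staggered, Hopf-linked horizontal and vertical circles (Figures \ref{fig:block}, \ref{fig:block2}), and then cancelling the concentric circles one at a time by sliding the linked circles along the chain until only one links the innermost circle (Figure \ref{fig:block3}); tracking a single curve $K$ through that explicit algorithm is what shows it ends up as an unknot or as a circle in an $S^1\times S^2$ summand. Without some such normal form and algorithm, the claim that the tracked curve is never ``something more complicated'' is precisely the unproved assertion, so the argument has a hole exactly where you predicted the main obstacle would be.
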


\begin{proof}[Proof of Lemma \ref{lem:ress1s2}]
Let $\phi$ be the composition $\phi=D_{\gamma_1}^{\epsilon_1}\cdots D_{\gamma_n}^{\epsilon_n},$ where each $\gamma_j$ is among the curves depicted in Figure \ref{fig:genset3}. As described in Section \ref{sec:kh}, a surgery diagram for the complete resolution $(S_{k,1},\phi)_i$ is obtained from the diagram for $-M_{S_{k,1},id}$ (depicted in Figure \ref{fig:IdKirby}) by performing $0$-surgeries on the curves in some subset of $\{\gamma_j \times \{t_j\}\}_{j=1}^n$. 

After isotopy, such a diagram consists of several ``blocks" of concentric circles, together with ``staggered" horizontal and vertical circles (see Figure \ref{fig:block}). Consider the slightly more general arrangement of curves represented schematically in Figure \ref{fig:block2}. In this schematic picture, the shaded annuli represent blocks of concentric circles, the unmarked rectangles represent staggered horizontal and vertical circles, and the rectangle labeled $A$ represents a union of horizontal circles which may be slid past one another like beads on an abacus, up and down the strands of the concentric circles in the rightmost block. Certainly, the curves in the surgery diagram for $(S_{k,1},\phi)_i$ constitute such an arrangement.

\begin{figure}[!htbp]
\begin{center}
\includegraphics[width=10.5cm]{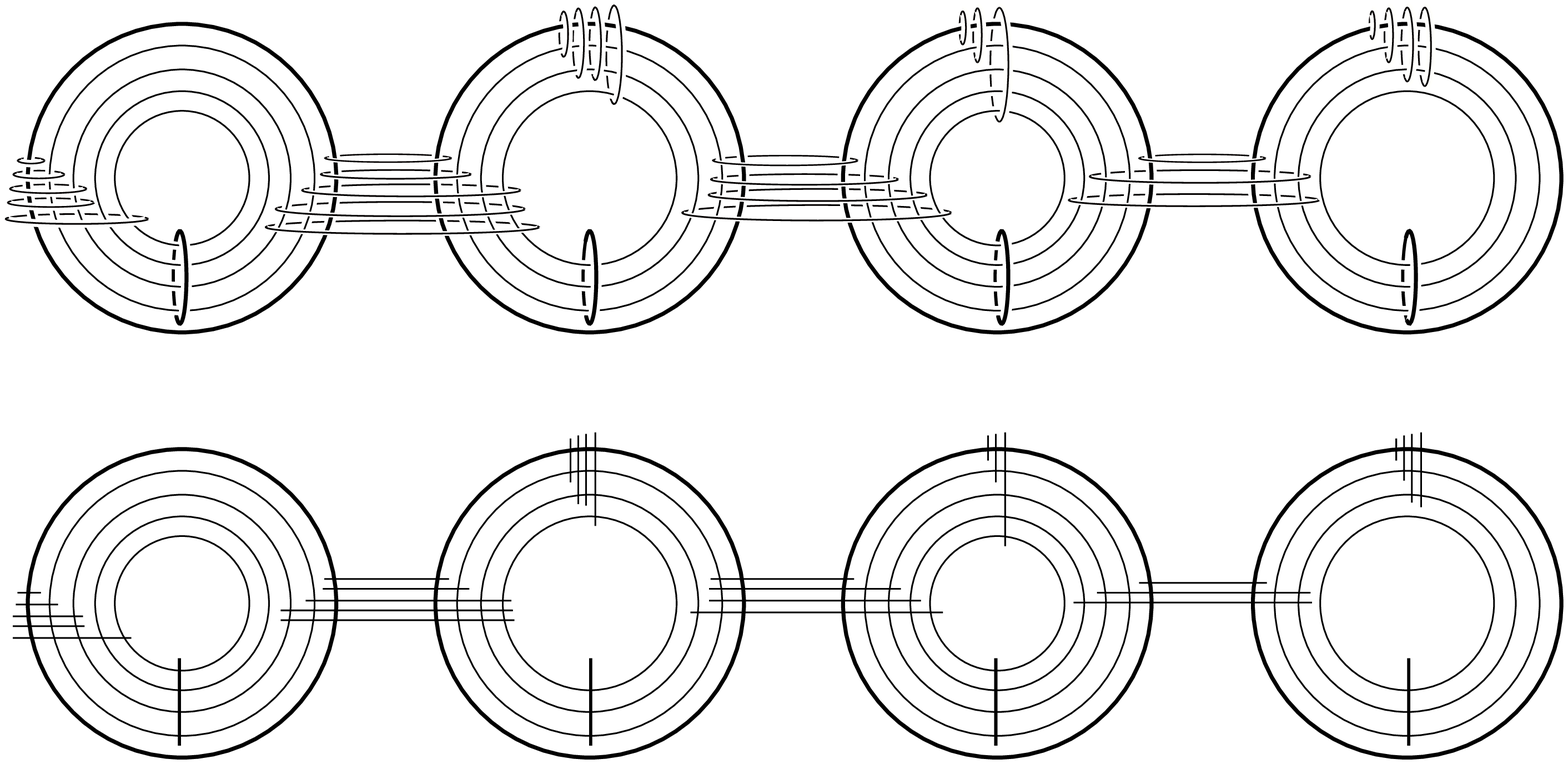}
\caption{At the top is an example of the surgery diagram for a complete resolution $(S_{4,1},\phi)_i$. Though we have not labeled the surgery coefficients, it is understood that they are all $0$. The thicker circles are the surgery curves for $-M_{S_{4,1},id}$. The bottom picture is shorthand for the surgery diagram at the top.}
\label{fig:block}
\end{center}
\end{figure}

\begin{figure}[!htbp]
\begin{center}
\includegraphics[width=13.5cm]{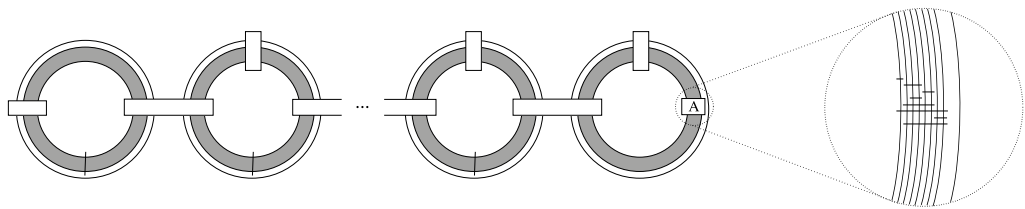}
\caption{\quad The magnified region is an example of what the horizontal circles represented by the rectangle $A$ might look like. The key point is that you can slide any one of them past any other.}
\label{fig:block2}
\end{center}
\end{figure}

Our proof that $(S_{k,1},\phi)_i$ is a connected sum of $S^1 \times S^2$'s is rather inelegant. We start with a diagram obtained by performing $0$-surgery on each of the curves in an arrangement as in Figure \ref{fig:block2}, and perform handleslides and handle cancellations until we arrive at a diagram for $0$-surgeries on the components of an unlink. 

We perform these handleslides/cancellations beginning with the rightmost block of concentric circles. If $C$ is the innermost of these concentric circles, then $C$ might link some of the horizontal and vertical circles represented by the rectangles which intersect the rightmost annulus in Figure \ref{fig:block2}. If there is no such linking, then $C$ is an unknot which may be pulled aside. If there \emph{is} linking, then handleslide these horizontal and vertical circles over one another until only one remains which links $C$ (it does not matter which circle is handleslid over which), and then cancel $C$ with this remaining surgery curve (see Figure \ref{fig:block3} for an illustration of this process). In either case, we are left with one fewer concentric circle in our arrangement. It is not hard to see that this procedure may be repeated until all of the concentric circles in the rightmost block have been cancelled or pulled aside from the arrangement. Afterwards, what remains is an arrangement of surgery curves as in Figure \ref{fig:block2} with one fewer block of concentric circles, together with an unlink. Repeat this process until all of the blocks have been eliminated, and all that is left is an unlink.

\begin{figure}[!htbp]
\begin{center}
\includegraphics[width=10cm]{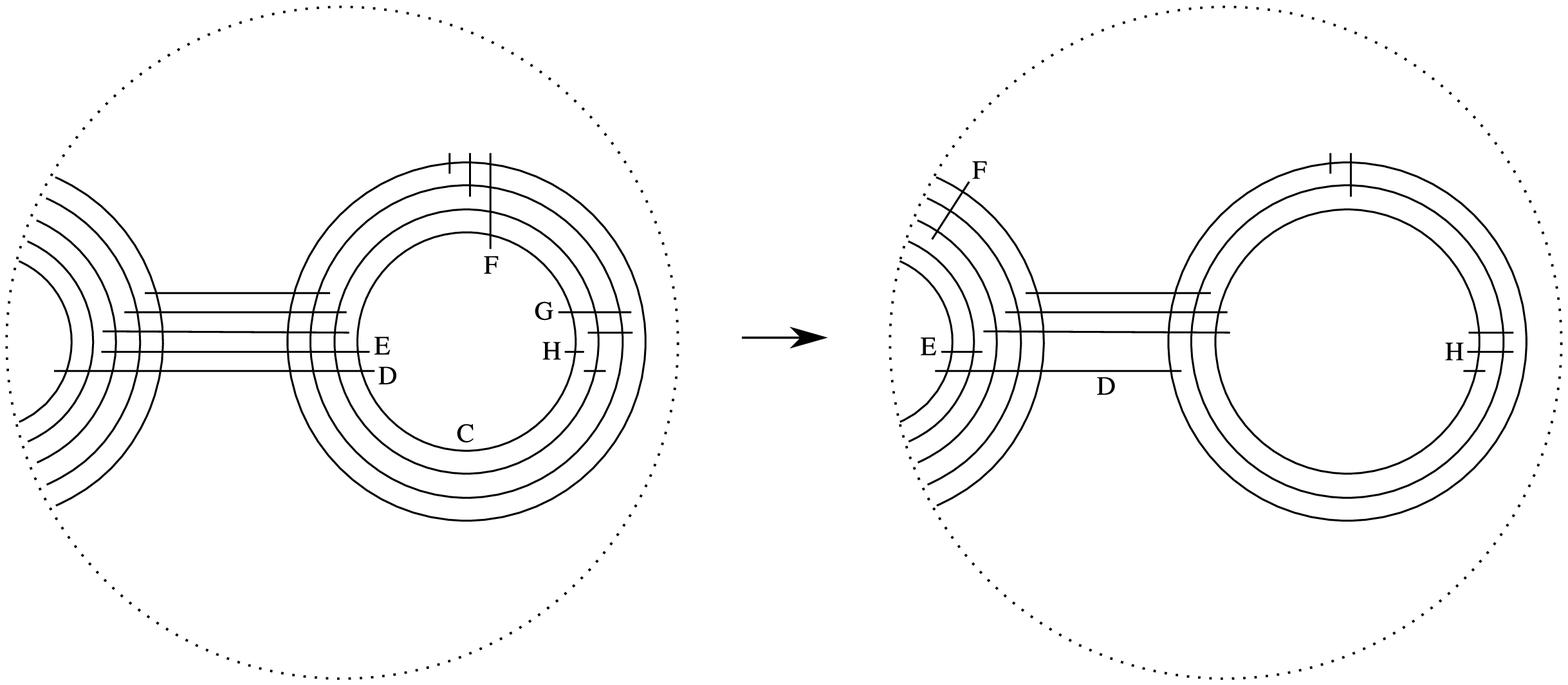}
\caption{\quad In order, handleslide $F$ over $E$, then $E$ over $D$, then $D$ over $G$, and, finally, $H$ over $D$. Next, cancel $C$ with $G$. The result is the picture on the right. We can iterate this process until we have eliminated all of the concentric circles in the rightmost block.}
\label{fig:block3}
\end{center}
\end{figure}

As was described in Section \ref{sec:kh}, the maps $$(D_{i,i'})_*:\hf((S,\phi)_i)\rightarrow \hf((S,\phi)_{i'})$$ are induced by 2-handle cobordisms corresponding either to $0$-surgery on $\gamma_k \times \{t_k\}$ for some $k$, or on a meridian of $\gamma_k \times \{t_k\}$. Therefore, to finish the proof of Lemma \ref{lem:ress1s2}, it suffices to show that if $K$ is one of the curves in the arrangement in Figure \ref{fig:block2}, then $K$, thought of as a knot in the 3-manifold $Y$ obtained by performing $0$-surgery on all of the \emph{other} curves in the arrangement, is either an unknot (with the correct framing) or a circle in one of the $S^1 \times S^2$ summands of $Y$. But this can be seen explicitly by keeping track of the knot $K$ as one performs the reductive algorithm described above. 

\end{proof}

In order to actually compute $\kh(S_{k,1},\phi)$, where $\phi$ is a composition of Dehn twists around the curves in Figure \ref{fig:genset3}, begin by fixing a basis for the first homology of each complete resolution. By Proposition \ref{prop:s1s2} and Lemma \ref{lem:ress1s2}, $$\hf((S_{k,1},\phi)_i) \cong \wedge^*H_1((S_{k,1},\phi)_i).$$ Now suppose that $i'$ is an immediate successor of $i$ for which $i'_k>i_k$, and determine how the basis for $H_1((S_{k,1},\phi)_i)$ changes upon performing $0$-surgery on either $\gamma_k \times \{t_k\}$ or its meridian. With this information in hand, we may use (according to Lemma \ref{lem:ress1s2}) Proposition \ref{prop:s1s2} to compute the map $$(D_{i,i'})_*:\wedge^*H_1((S,\phi)_i)\rightarrow \wedge^*H_1((S,\phi)_{i'})$$ in terms of the bases that we fixed at the beginning. This is how the program $\tt{Kh}$ \cite{baldurl} works. Below, we give a simple example of this procedure.

\begin{example} On the left of Figure \ref{fig:Res2} are surgery diagrams for the complete resolutions of the open book $(S,\phi) = (S_{1,1},D_{\alpha_2} D_{\alpha_1}^{-1})$. We have omitted the surgery coefficients, but it is understood that they are all $0$ (compare with Figure \ref{fig:Res}). Instead, we have labeled the surgery curves $A$, $B$, $C$, and $D$. Let $a$, $b$, $c$, and $d$ denote the respective meridians of these curves. The right side of Figure \ref{fig:Res2} depicts the term $E^1_I(S,\phi) \cong \bigoplus_{i\in \{0,1\}^2}\hf((S,\phi)_i)$ using the identification of $\hf((S,\phi)_i)$ with $\wedge^*H_1((S,\phi)_i)$ described in Proposition \ref{prop:s1s2}. In addition, we have indicated the maps $(D_{i,i'})_*$ which comprise the differential $D^1_I$.

\begin{figure}[!htbp]
\begin{center}
\includegraphics[width=13.6cm]{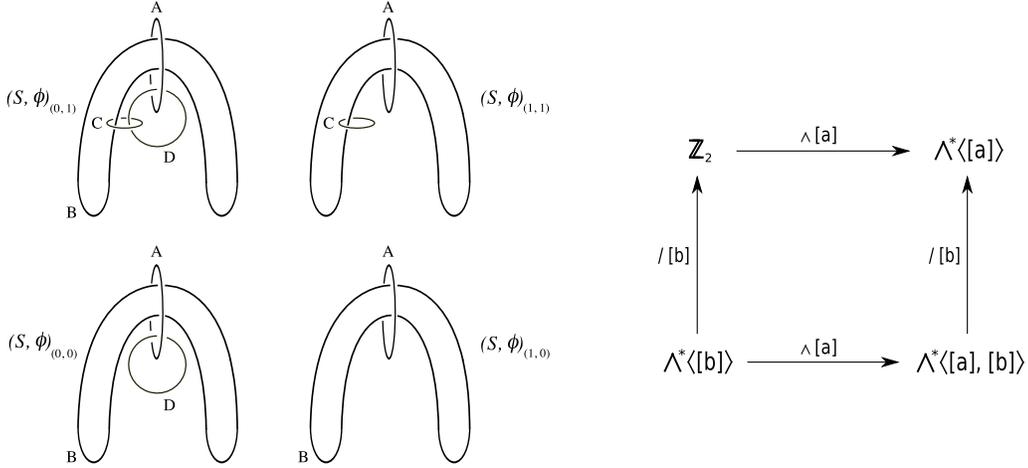}
\caption{\quad Computing the reduced Khovanov homology of $(S,\phi) = (S_{1,1},D_{\alpha_2} D_{\alpha_1}^{-1})$.}
\label{fig:Res2}
\end{center}
\end{figure}

Observe that $(S,\phi)_{(0,0)} \cong S^1 \times S^2$, and its first homology is generated by $[b]$. Therefore, $\hf((S,\phi)_{(0,0)})$ may be identified $\wedge^*\langle [b] \rangle$.  Meanwhile, $(S,\phi)_{(1,0)} \cong \#^2(S^1 \times S^2)$, and its first homology is generated by $[a]$ and $[b]$; hence, $\hf((S,\phi)_{(0,0)})$ may be identified with $\wedge^*\langle[a], [b] \rangle$. Now, $(S,\phi)_{(1,0)}$ is obtained from $(S,\phi)_{(0,0)}$ by performing $0$-surgery on the meridian $d.$ Handlesliding $d$ over $A$, we see that $d$ is an unknot in $(S,\phi)_{(0,0)}$. If $W$ is the 2-handle cobordism corresponding to this $0$-surgery, then the kernel of the map $H_1((S,\phi)_{(1,0)}) \rightarrow H_1(W)$ is generated by $[a]$. Therefore, by Proposition \ref{prop:s1s2}, the map $$(D_{(0,0),(1,0)})_*:\wedge^*\langle [b] \rangle \rightarrow \wedge^*\langle[a], [b] \rangle$$ sends $x$ to $x \wedge [a]$. The other components $(D_{i,i'})_*$ are computed similarly.

It is easy to see that $\kh(S,\phi)$ is isomorphic to $\zzt$, and is generated by the cycle corresponding to $[b] \in \hf((S,\phi)_{(1,0)}).$ Moreover, the element $\khct(S,\phi)=[a]\wedge[b] \in \hf((S,\phi)_{(1,0)})$ is the boundary of $[b] \in \hf((S,\phi)_{(0,0)})$ under the differential $D^1_I$. Therefore, $\khc(S,\phi)=0$. These results are not surprising since $\kh(S,\phi)$ is simply the reduced Khovanov homology of the closed braid $L_{\sigma_2 \sigma_1^{-1}}$, which is the unknot. In addition, $\khc(S,\phi)$ is Plamenevskaya's transverse link invariant, $\khc(L_{\sigma_2 \sigma_1^{-1}})$, which is known to vanish \cite[Proposition 3]{pla1}. (Of course,  vanishing of $\khc(S,\phi)=0$
also follows from Lemma \ref{lem:negstab}, since the open book $(S,\phi)$ can be obtained by negative stabilization.)
\end{example}

Our generalization of reduced Khovanov homology was motivated, in part, by a conjecture of Ozsv{\'a}th which suggests that $\text{rk}(\kh(K_1)) = \text{rk}(\kh(K_2))$ whenever $\Sigma(K_1) \cong \Sigma(K_2)$.\footnote{Liam Watson has recently found an infinite family of counterexamples \cite{liam}.} This conjecture would follow from our construction if $\kh(S,\phi)$ were an invariant of the 3-manifold $M_{S,\phi}$. Although $\kh(S,\phi)$ is invariant under stabilization, $\kh(S,\phi)$ is not an invariant of the isotopy class of $\phi$. This can be shown using the program {\tt Kh}. 

According to \cite{hum} (see also \cite{ozstip}), the mapping class group of $S_{k,1}$ has a presentation $$\langle D_{\alpha_0},\dots,D_{\alpha_{2k}}| b_0,c_1,\dots,c_{2k-1}, r_1, r_2\rangle,$$ where $b_0$ is the relation $$(D_{\alpha_4}D_{\alpha_0}D_{\alpha_4})(D_{\alpha_0}D_{\alpha_4}D_{\alpha_0})^{-1},$$ $c_j$ is the relation $$(D_{\alpha_j}D_{\alpha_{j+1}}D_{\alpha_j})(D_{\alpha_{j+1}}D_{\alpha_j}D_{\alpha_{j+1}})^{-1},$$ and $r_1$ is the relation $\phi_1 \phi_2^{-1},$ where $$\phi_1 = (D_{\alpha_1}D_{\alpha_2}D_{\alpha_3}) ^4D_{\alpha_4}^{-1}D_{\alpha_3}^{-1}D_{\alpha_2}^{-1},$$ and $$\phi_2 = D_{\alpha_0}D_{\alpha_4}^{-1}D_{\alpha_3}^{-1}D_{\alpha_2}^{-1}D_{\alpha_1}^{-1}D_{\alpha_0}D_{\alpha_1}^{-1}D_{\alpha_4}D_{\alpha_2}^{-1}D_{\alpha_3}D_{\alpha_2}D_{\alpha_4}^{-1}D_{\alpha_1}D_{\alpha_0}^{-1}D_{\alpha_1}.$$ The relation $r_2$ is more complicated. Computer calculations suggest that $\kh(S_{k,1},\phi)$ is invariant under composition with the relations $b_0$ and $c_j$, but not with $r_1$, since $\kh(S_{2,1},\phi_1) \cong \zzt^6$, while $\kh(S_{2,1},\phi_2) \cong \zzt^{25}.$

For a while, we had hoped that the \emph{vanishing} of $\khc(S,\phi)$ was an invariant the contact structure $\xi_{S,\phi}$. We now know this to be false. Assume, for a contradiction, that it is true. Then $\khc(S,\phi)=0$ whenever $\xi_{S,\phi}$ is overtwisted. For, if $\xi_{S,\phi}$ is overtwisted, then $\xi_{S,\phi}$ is compatible with an open book $(S,\phi')$ which is a negative stabilization of some other open book (see \cite{ozstip}). By Lemma \ref{lem:negstab}, $\khc(S,\phi')=0$. On the other hand, the open book $(S,\phi) = (S_{2,1}, D_{\alpha_1} D_{\alpha_2} D_{\alpha_3} D_{\alpha_5} (D_{\alpha_3})^{-5} D_{\alpha_4} D_{\alpha_0})$ in Example \ref{nex2} below corresponds to an overtwisted contact structure, while the program {\tt Trans} \cite{baldurl} shows that $\khc(S,\phi) \neq 0.$

\newpage 

\section{Transverse links}
\label{sec:bdc}

In this section, we prove Theorem \ref{thm:nonzero} and Corollary \ref{cor:tightness}. In Section \ref{sec:examples}, we will use these two results in conjunction with Theorem \ref{thm:tight} to show that the contact structure $\xi_K$ is tight for several classes of transverse knots $K\subset S^3$.

\begin{proof}[Proof of Theorem \ref{thm:nonzero}] Suppose that $K$ is a transverse knot with $sl(K)=s(K)-1$. Though we are interested in reduced Khovanov homology with coefficients in $\zzt$, it is instructive to first consider the case of non-reduced homology with rational coefficients. In \cite{lee}, Lee introduces a differential $d'= d + \Phi$ on the Khovanov chain complex $CKh(K)$, where $d$ is Khovanov's original differential \cite{kh1}, and $\Phi$ is a map which raises the quantum grading (or ''$q$-grading" for short). Recall that $d'$ arises from the multiplication and comultiplication maps defined by     
\begin{eqnarray*}
 m(\veep\otimes \veep) = m(\veem\otimes \veem)= \veep \qquad    &\Delta(\veep) = \veep\otimes \veem + \veem\otimes\veep \\ 
 m(\veep\otimes \veem) = m(\veep\otimes \veem)= \veem \qquad    &\Delta(\veem) = \veem\otimes \veem + \veep\otimes\veep 
\end{eqnarray*}
(we follow the notation of \cite{ras3}, in which $\veem$ and $\veep$ are the standard generators in quantum degrees $-1$ and $+1$.) 

Lee proves that $Kh'(K) = H_*(CKh(K),d')$ is isomorphic to $\mathbb{Q} \oplus \mathbb{Q}$, and is generated by two canonical cycles which correspond to the two possible orientations of $K$. Represent the transverse knot by an oriented $k$-braid, and let $\so$ denote the corresponding canonical cycle. The oriented resolution of this braid is a union of $k$ nested circles, and $$\so=(\veem + (-1)^d\veep)\otimes(\veem + (-1)^{d+1}\veep)\otimes \dots \otimes (\veem + (-1)^{d+k-1} \veep)$$ is the element of $CKh(K)$ obtained by alternately labeling these circles by $\veem +\veep$ and $\veem-\veep$ (the label on the outermost circle depends on the orientation of the knot). Recall that $\khc(K)=[\widetilde{\khc}(K)] \in Kh(K)$, where $$\widetilde{\khc}(K) = \veem \otimes\veem\otimes \dots \otimes \veem$$ is the $q$-homogeneous part of $\so$ with the lowest $q$-grading \cite{pla1}.

In \cite{ras3}, Rasmussen defines a function $s$ on $Kh'(K)$ whose value on $x\in Kh'(K)$ is the largest $n$ such that $x$ can be represented by a cycle whose $q$-homogeneous terms all have $q$-gradings at least $n$. The invariant $s(K)$ is then defined so that $s(K)-1=s_{min}=s([\so])$. Recall from \cite{pla1} that $q(\widetilde\khc(K))=sl(K)$. Therefore, the hypothesis of Theorem \ref{thm:nonzero} implies that $q(\widetilde{\khc}(K))=s(K)-1$. If $\khc(K)=0$, then $\widetilde{\khc}(K) = dy$ for some $y \in CKh(K)$. Since $d$ preserves quantum gradings, it must be that $q(y)=s(K)-1$ as well. Then $\so-d'y$ is a cycle in $(CKh(K),d')$ which is homologous to $\so$ and whose $q$-homogeneous terms all have $q$-gradings strictly greater than $s(K)-1$. This contradicts the equality $s([\so])=s(K)-1$. 

To make a similar argument with $\zzt$ coefficients, we must use Turner's modification of Lee's construction (since $Kh'(K)$ is isomorphic to $Kh(K)$ over $\zzt$). In \cite{tur}, Turner defines a differential $d''$ on $CKh(K)$ (now, with $\zzt$ coefficients) using the multiplication and comultiplication maps given by
\begin{eqnarray*}
 m(\veep\otimes \veep) &=& \veep  \qquad \qquad \qquad \qquad \Delta(\veep) = \veep\otimes \veem + \veem\otimes\veep +\veep\otimes\veep \\ 
 m(\veem\otimes \veem)&=& \veem  \qquad  \qquad  \qquad \qquad \Delta(\veem) = \veem\otimes \veem  \\
 m(\veep\otimes \veem) &=& m(\veep\otimes \veem)= \veem.   
\end{eqnarray*}
He shows that $Kh''(K) = H_*(CKh(K),d'')$ is isomorphic to $\zzt \oplus\zzt$, and is generated by two cycles corresponding to the two orientations of $K$. If we represent $K$ by an oriented braid, then the corresponding cycle $\so$ is obtained by alternately labeling the nested components of the oriented resolution by $\veem+\veep$ and $\veem$ (as before, the label on the outermost circle depends on the orientation of the knot). The transverse invariant $\psi(K)$ is again the image in $Kh(K)$ of the $q$-homogeneous part of $\so$ with the lowest $q$-grading. Moreover, an analogue of the function $s$ can be defined in this setting, and it takes the same values as Rasmussen's $s$ function \cite{mvt}. Therefore, our argument from the preceding paragraph still applies.  

It remains to deal with the reduced case. Mark a point on $K$, and let $CKh_{-}(K)$ denote the subcomplex generated by elements in which the marked circle is labeled by $\veem$ in every resolution. The reduced Khovanov chain complex is the quotient complex $$\widetilde{CKh}(K) = CKh(K)/CKh_{-}(K).$$ Turner's construction works just as well for reduced Khovanov homology over $\zzt$. In this case, $\kh''(K) = H_*(\widetilde{CKh}(K),d'')$ is isomorphic to $\zzt$. When $K$ is a braid, $\kh''(K)$ is generated by the element $\so$, which is obtained by alternately labeling the nested components of the oriented resolution of $K$ by $\veem+\veep$ and $\veem$ so that the marked circle is labeled by $\veem+\veep$. The $q$-homogeneous part of $\so$ with the lowest $q$-grading is the element $\widetilde{\psi}(K)$ formed by labeling the marked circle in the oriented resolution of $K$ by $\veep$, and every other circle by $\veem$. The quantum grading is shifted by 1 in the reduced theory, so that the analogue $\widetilde{s}$ of Rasmussen's function $s$ satisfies $\widetilde{s}([\so])=s(K)$. Since $q(\widetilde{\khc}(K)) = sl(K)+1$ in the reduced theory, we may proceed as before to show that $sl(K) = s(K)-1$ implies that $\khc(K) \neq 0$.    

To prove the converse, note that 
if $\kh(K)$ is thin, then the component $\kh^{0, *}(K)$ is non-trivial only in the quantum grading $q=s(K)$. Since $\khc(K) \in \kh^{0, *}(K)$, and $q(\khc(K)) = sl(K)+1$, it follows that if $\khc(K) \neq 0$, then $sl(K)=s(K)-1$. 

\end{proof}

\begin{proof}[Proof of Corollary \ref{cor:tightness}] This follows immediately from Theorem \ref{thm:nonzero} and Theorem \ref{thm:tight}, together with the fact that if $K$ is quasi-alternating, then the spectral sequence from $\kh(K)$ to $\hf(-\Sigma(K))$ collapses at the $E^2$ term and $s(K)=\sigma(K)$ \cite{manozs}. 

\end{proof}

\newpage

\section{Examples}
\label{sec:examples}

\subsection{Tightness by means of Khovanov homology}
\label{ssec:tightness}
In this subsection, we give examples of contact structures whose tightness can be established by means of Khovanov homology. We first focus on the case of double covers of transverse braids for which Corollary 
\ref{cor:tightness} applies. When $K$ is a quasipositive braid, the equality $sl(K) = s(K)-1$ holds, but the contact structure $\xi_K$ is Stein-fillable and, therefore, automatically tight. To get more interesting examples, we look for non-quasipositive knots for which $sl = s-1$. The mirrors of $10_{125}$, $10_{130},$ and $10_{141}$ are the only such knots with 10 crossings or fewer. (As was indicated to the second author by Lenny Ng, this may be verified by contrasting the list of quasipositive knots from \cite{Baa} and the values of the maximal self-linking numbers \cite{ng}.) We use each of these knots to obtain an infinite family of tight contact 
structures, and we show that the contact structures in these families are not Stein-fillable.   

We also use Theorem \ref{thm:tight} to give examples of tight contact 3-manifolds which do not obviously arise as branched double covers of $(S^3,\xi_{st})$ along transverse links. Unfortunately, we do not have any analogues of Theorem \ref{thm:nonzero} and Corollary \ref{cor:tightness} in this more general situation. Instead, we use the computer programs {\tt Kh} and {\tt Trans} \cite{baldurl} to check the collapsing of the spectral sequence and non-vanishing of $\psi$. 
 
It will be helpful to use contact surgery presentations for the contact structures we consider. The idea \cite{AO, pla3} in the construction of such surgery diagrams is to isotope a page of the open book and certain curves on that page so that all curves on which Dehn twists are performed become Legendrian knots. Right-handed Dehn twists are then equivalent to Legendrian surgeries, and left-handed Dehn twists to $(+1)$ contact surgeries. We can (and will) always assume that the monodromy presentation of an open book starts with a sequence of right-handed Dehn twists whose product yields an open book compatible with $(S^3,\xi_{st})$; this allows us to get rid of 1-handles and perform all the contact surgeries on Legendrian knots in $S^3$.  More precisely, a sequence of Dehn twists performed in a certain order corresponds to a sequence of surgeries on push-offs of the corresponding curves. Some care is needed to determine the linking of these push-offs; we refer the reader to \cite{HKP} for details (in the case of branched covers), and only state the answer for the more general case that we need here. The procedure of ``Legendrianizing'' an open book is illustrated on Figure \ref{fig:surgery_diagrams}. A page of an open book for $S^3$ with the monodromy given by $D_{\alpha_2} D_{\alpha_3} \dots D_{\alpha_n}$ can be visualized as the Seifert surface $S$ of a $(2, n)$ torus knot; the curves $\alpha_1$, \dots, $\alpha_n$ become loops around subsequent Hopf bands forming the surface $S$, and $\alpha_0$ is a somewhat more complicated curve (see top of Figure \ref{fig:surgery_diagrams}). Next, the torus knot can be placed into $S^3$ (with the standard contact form $dz-y\,dx$) so that $S$ becomes a page of an open book compatible with the contact structure; the curves $\alpha_0, \dots \alpha_n$ are now Legendrian knots that can be seen on this page  (bottom of Figure \ref{fig:surgery_diagrams}). (Some care must be taken to place two strands of $\alpha_0$ on the same thin strip; this can be achieved by slightly twisting the strip.)

Considering various push-offs of these curves (shown on Figure \ref{fig:pushoffs}, see also \cite{HKP}) step-by-step, we can obtain surgery diagrams corresponding to arbitrary monodromies. We observe that 
the curves  $\alpha_1$, \dots, $\alpha_n$ (and their push-offs) yield Legendrian unknots with $tb=-1$, so 
the diagrams for branched double covers all consist of standard Legendrian unknots only; 
the curve $\alpha_0$ gives a stabilized Legendrian unknot with $tb=-2$.

\begin{figure}[!htbp] 
\includegraphics[width = 12.5cm]{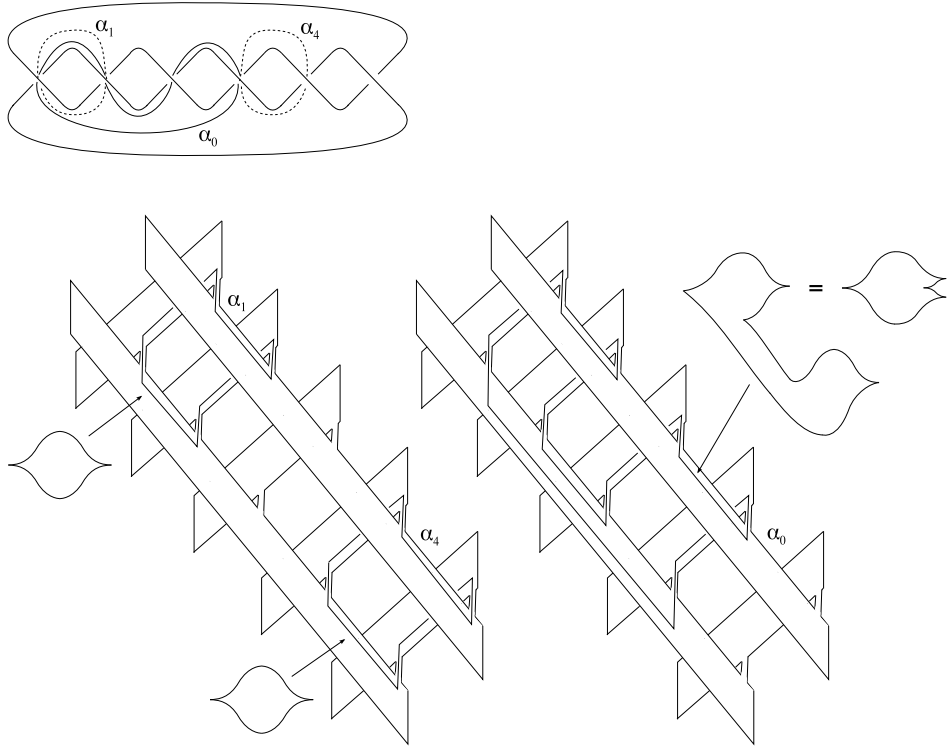}
\caption{From open books to surgery diagrams.}\label{fig:surgery_diagrams} 
\end{figure}  
  
\begin{figure}[!htbp] 
\includegraphics[width = 11cm]{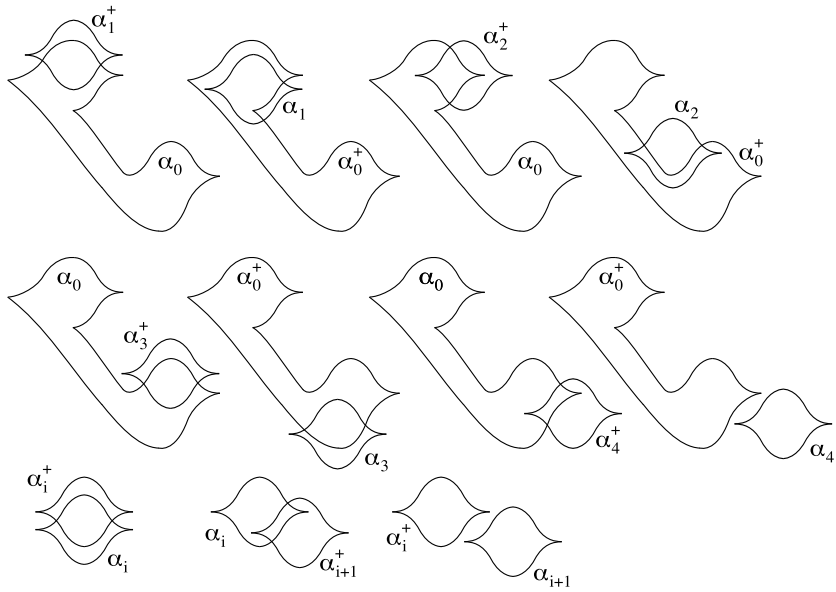}
\caption{The linking of Legendrian push-offs of the curves $\alpha_i$.}\label{fig:pushoffs} 
\end{figure}

\begin{example} \label{ex:e125} 

For $r\geq 5$,  consider the pretzel link $P(-r, 3, -2)$ (for $n=5$, this is the mirror of the knot $10_{125}$), and let $K_r$ be its transverse representative given by the closed braid $$(\sigma_1)^{-r} \sigma_2 \sigma_1^{3} \sigma_2.$$
We use the algorithm described above to obtain the contact surgery description 
for the induced contact structure $\xi$ on the branched double cover  $\Sigma(K_r)$.  The resulting surgery diagram for $r=5$ is  shown on the right of Figure~\ref{125} (when $r>5$, we get the diagram  with $r$ $(+1)$-surgeries). 
The unoriented surgery link is Legendrian isotopic to the one shown on the left of Figure~\ref{125}, so we can 
work with the more symmetric diagram. In other examples of this section, we will similarly change surgery links 
by Legendrian isotopy to make pictures more symmetric.

\begin{figure}[!htbp] 
\includegraphics[width = 11cm]{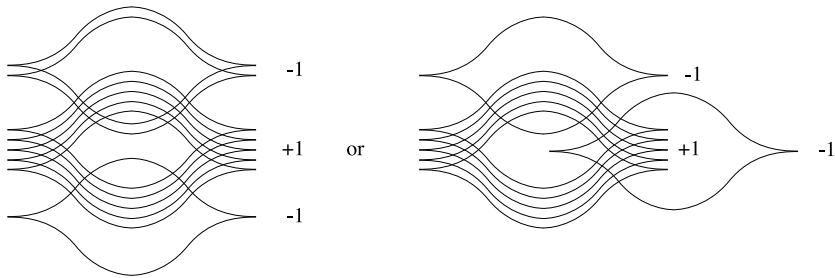}
\caption{The surgery diagram for the branched double cover  of the transverse knot $K=K_5$ in Example \ref{ex:e125}
 .}\label{125} 
\end{figure}

The underlying smooth manifold  $\Sigma(K_r)$ is  the Seifert fibered space $M(-1; 2/3, 1/2, 1/n)$.  (See Figure~\ref{kirby125} for a sequence of Kirby calculus  moves demonstrating this for $n=5$.) Each link $K_r$ is quasi-alternating. Indeed, $|\det (K_r)| = |H_1 ( M(-1; 2/3, 1/2, 1/n) )| = r+6$. On the other hand, resolving the crossing circled in Figure \ref{braid125} in two possible ways, we obtain the link $K_{r-1}$ and the unknot. Repeating the procedure $r$ times, we get the link $K_0$, which is the trefoil linked once with the unknot.
Thus $K_0$ is an alternating link with $|\det (K_0)|=6$, and, since $|\det (K_r)| = |\det(K_{r-1})|  + |\det(\text{unknot})|$, we see by induction that $K_r$ is quasi-alternating.

\begin{figure}[!htbp] 
\includegraphics[width = 9cm]{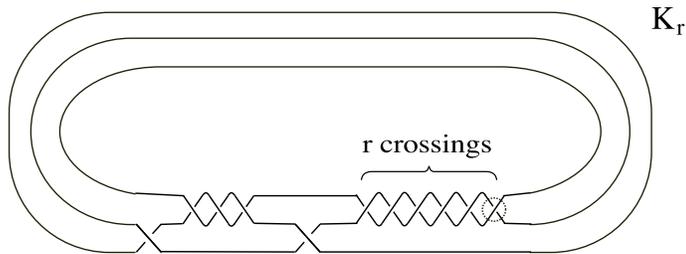}  
\caption{The links $K_r$ are quasi-alternating.}\label{braid125}
\end{figure} 

We next check the hypothesis of Theorem \ref{thm:nonzero} when $r$ is odd (i.e. $K_r$ is a knot). We compute $sl(K)= 2-r$. Since the knot $K_r$ is quasi-alternating, $s$ equals to the signature $\sigma(K_r)=3-r$ (we compute the signature via the Goeritz matrix of the knot \cite{GL}). When $r$ is even, $K_r$ is a two-component link, so Theorem \ref{thm:nonzero} does not apply. However, we can argue that $\psi(K_r) \neq 0$ by \cite[Theorem 4]{pla1}, since $\psi(K_{r-1}) \neq 0$, and the transverse braid $K_{r-1}$ is obtained from  $K_{r}$ by resolving a negative crossing. 

Corollary \ref{cor:tightness}  implies that the branched double cover of each $K_r$ is a tight contact manifold. We now show that none of them are Stein-fillable. Since $\Sigma(K_5)$ can be obtained from any of $\Sigma(K_r)$ by a sequence of Legendrian surgeries, it suffices to check that $\xi=\xi_{K_5}$  (shown on Figure~\ref{125}) is not Stein-fillable.

First, we compute the homotopy invariants of the contact structure $\xi$. Recall \cite{DGS} that the three-dimensional invariant $d_3$ of a contact structure given by a contact surgery diagram can be computed as $$d_3 (\xi) = \frac{c_1(\spc)^2-2\chi(X)-3 \sigma(X)+2}4 +m, $$ where $X$ is a 4-manifold bounded by $Y$ and obtained by adding $2$-handles to $B^4$ as dictated by the surgery diagram, $\spc$ is the corresponding $\Sc$ structure on $X$, and $m$ is the number of $(+1)$-surgeries in the diagram. The $\Sc$ structure $\spc$ arises from an almost-complex structure defined in the complement of a finite set of points in $X$, and the class  $c_1(\spc)$ evaluates on each homology generator of $X$ corresponding to the handle attachment along an (oriented) Legendrian knot as the rotation number of the knot. Thus, for the contact structure  $\xi$  on $Y=M(-1; 2/3, 1/2, 1/5)$  defined by the surgery diagram from Figure~\ref{125}, we compute  $c_1(\spc)=0$ and $d_3(\xi)=-\frac12$.

We show that $\xi$ is not Stein-fillable, combining the ideas from \cite{GLS} and \cite{Li}. More precisely, 
we will show that $Y$ carries no Stein-fillable contact structures with $d_3=-\frac12$. Note that $Y$ is an $L$-space since it is the branched double cover 
of a quasi-alternating knot \cite{osz12}. 
\begin{figure}[!htbp] 
\includegraphics[width = 11.3cm]{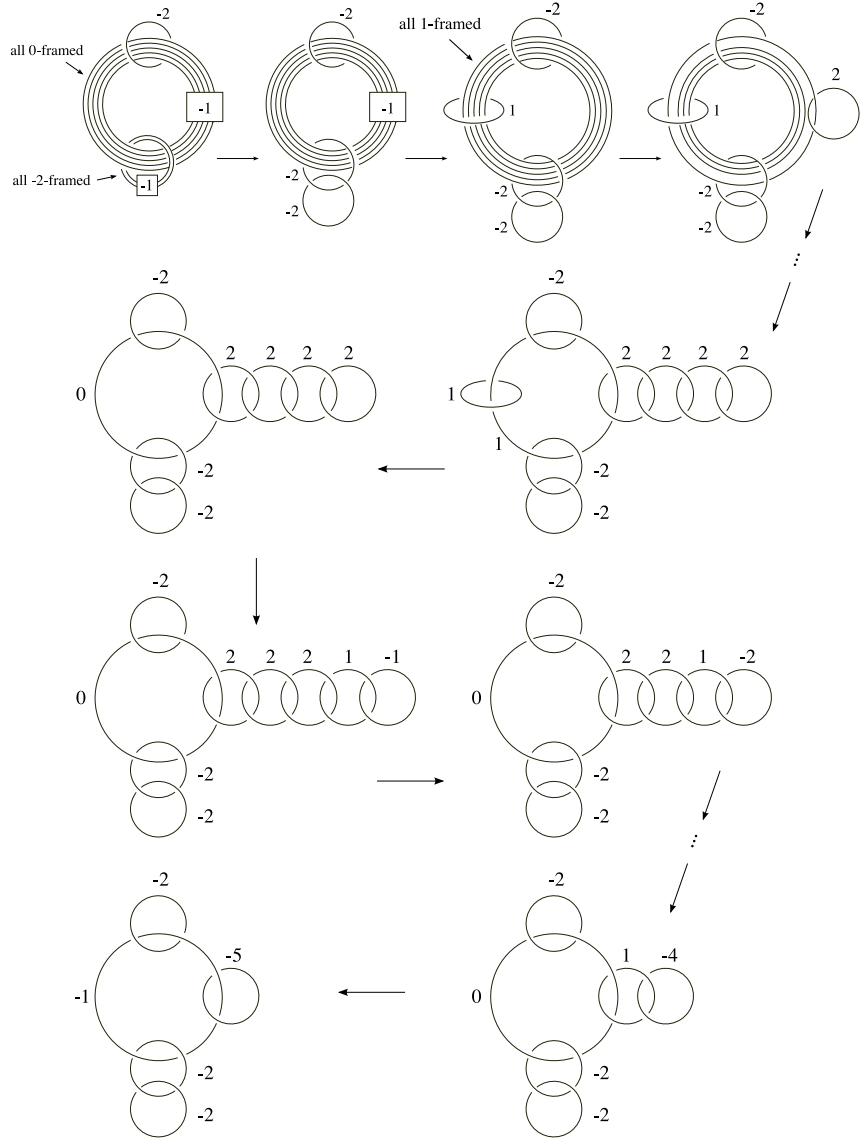}
\caption{Kirby moves.} \label{kirby125}
\end{figure} Suppose that $(X, J)$ is a Stein-filling for $\xi$, and $\spc_J$ is the corresponding $\Sc$ structure on $X$. Note that $\spc_J$ restricts to the $\Sc$ structure $\spc_{\xi}$  on $Y$, and $c_1(\spc_{\xi})=0$. Let $\bar \xi$ be the contact structure on $Y$ conjugate to $\xi$; then  $\spc_{\bar \xi} = \spc_{\xi}$, and   $\bar \xi$ has a Stein-filling $(X, -J)$ with $\spc_{-J}= \bar{\spc}_J$.  We claim that   $c_1(\spc_J)=0$. Indeed, otherwise the $\Sc$ structures $\spc_J$ and $\spc_{-J}$ are not isomorphic, and by \cite{pla3} the contact elements $c(\xi)$ and $c(\bar \xi)$ are linearly independent  elements in homology $\hf(-Y, \spc_{\xi})$. But $Y$  is an $L$-space, so $\hf(-Y, \spc_{\xi})$ has rank 1, a contradiction.

\begin{figure}[!htbp] 
\includegraphics[width = 10.6cm]{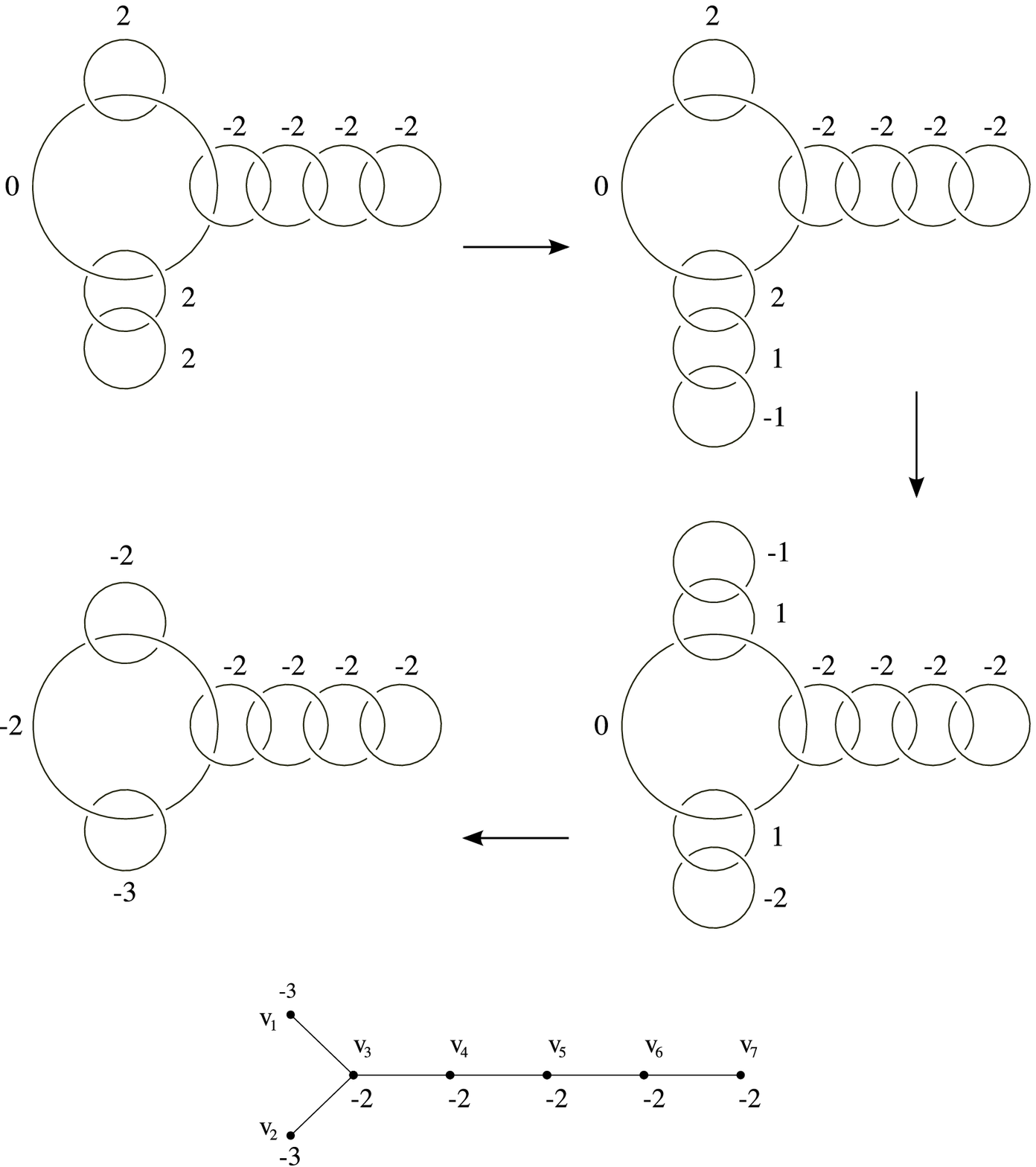}
\caption{Kirby diagrams for $-Y$ and the plumbing graph for $W$.} \label{-Yplumb}
\end{figure}  
On the other hand, the fact that $Y$ is an $L$-space implies \cite{osz2} that $b_2^+(X)=0$ for any symplectic filling $X$ of a contact 
structure on $Y$. By the argument in \cite{GLS}, it follows that $b_1(X)=0$. Now, observe that the space $-Y$
can be represented as the boundary of the plumbing shown on Figure \ref{-Yplumb}.

Denote by $W$ the 4-manifold with boundary $-Y$ given by this plumbing. If $X$ is a symplectic filling for $\xi$, 
then $X\cup W$ is an oriented negative-definite closed 4-manifold. By Donaldson's theorem, 
the intersection form on $X \cup W$ is standard diagonal $\langle -1 \rangle ^n$. To get restrictions on the intersection form 
of $X$, we consider the embeddings of the lattice given by Figure \ref{-Yplumb} into the standard negative-definite 
lattice, following \cite{Li}. Let $e_i$, $i=1, 2, \dots n$, be the basis of $ \langle -1 \rangle ^n$ such that $e_i \cdot e_j = -\delta_{ij}$.  
Let $v_i$ be the basis of  $H_2(W)$ corresponding to the vertices of the plumbing graph of Figure \ref{-Yplumb}. Up to permutations and sign reversals of $e_i$ (which are automorphisms of the lattice $ \langle -1 \rangle ^n$), we have 
\begin{align*}
&v_3 \mapsto e_1+e_2, \quad v_2 \mapsto -e_1 +e_3,  \quad v_1 \mapsto -e_1 -e_3 +e_5, \\     &v_4 \mapsto -e_2 +e_4, \quad
v_5 \mapsto -e_4 +e_6, \quad v_6 \mapsto -e_6+e_7, \quad v_7 \mapsto -e_7+e_8                                           
\end{align*}
(Another possibility would be for the first four vectors to embed as 
$$
v_3 \mapsto  e_1+e_2, \quad v_2 \mapsto -e_1 +e_3, \quad v_1 \mapsto -e_2 + e_4 + e_5, \quad v_4 \mapsto -e_1 - e_3, 
$$
but this leads to a contradiction when we try to embed $v_5$.)

The orthogonal complement $L$ of the image of the lattice generated by images of $v_i$'s in $\langle -1 \rangle ^n$ is then spanned by 
the vectors
$$
-e_1+e_2-e_3+e_4-2e_5+e_6+e_7+e_8, \quad
 e_9, \quad \dots \quad e_n,
$$
and the intersection form on $L$ is the diagonal form $ \langle -11 \rangle  \oplus \langle -1 \rangle ^{n-8}$.    
Because $H_1(Y) = \zz/11$ (indeed, $|H_1(Y)| = \det(10_{125})=11$), and both $H_2(X)$, $H_2(W)$ are torsion-free, we have 
$$
0 \to H_2(X)\oplus H_2(W) \to H_2(X\cup W) \to \zz/11 \to 0,  
$$  
and thus $H_2(X,\zz)$ is a subgroup of  $L= \zz^{n-7}$ of index $11$. Set $m=n-7=b_2(X)$, and let 
$\{u_1,\, u_2, \, \dots u_m\}$ be basis 
of $L$ in which the form is diagonal, and $u_1 \cdot u_1 = -11$. The vectors $11 u_1$,  $11 u_2$, ... $11 u_m$
lie in $H_2(X, \zz)$, and generate $H_2(X, \QQ)$ over $\QQ$.

It follows that
 $c_1(\spc_J)$ must evaluate as an odd integer on each vector   $11 u_1$, $11 u_2$, $\dots 11 u_m$; 
but since    $c_1(\spc_J)=0$, this means that  $m=0$. 
Then $d_3(\xi)=0$, which contradicts the calculation $d_3(\xi)=-\frac12$. 

\end{example}

\begin{figure}[!htbp] 
\includegraphics[width = 10.5cm]{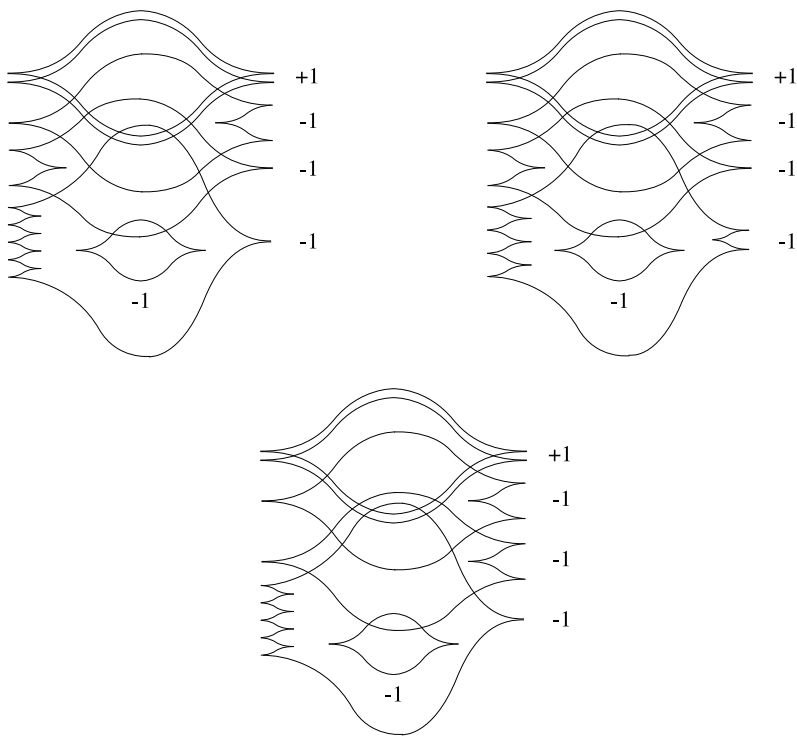}
\caption{The manifold $Y=M(-1; 2/3, 1/2, 1/5)$ carries three tight contact structures: $\xi_1$ (top left), $\xi_2$ (top right), and $\Xi$ (bottom).}\label{3cont} 
\end{figure}  

\begin{remark}
The branched double cover of the knot $K$ in the previous example is the Sefert fibered space $Y=M(-1; 2/3, 1/2, 1/5)$. 
Tight contact structures on this space were 
classified in \cite{GLS}: $Y$ carries three tight contact structures $\xi_1$, $\xi_2$ and $\Xi$ given by surgery diagrams on Figure~\ref{3cont}.   Of these,  $\xi_1$ and  $\xi_2$ are known to be Stein-fillable; 
 we can thus conclude that our contact structure $\xi$ is isotopic to $\Xi$.

\end{remark}

\begin{example}  \label{e141} Consider the transverse representative of the mirror of the knot $10_{141}$ given by the 
braid $\sigma_1^{-4} \sigma_2 \sigma_1^{3} \sigma_2^2$. We consider the family of braids  
$$
K_r = \sigma_1^{-r} \sigma_2 \sigma_1^{3} \sigma_2^2.
$$
The contact surgery description for the corresponding contact structures is given on Figure \ref{141};
the surgery diagrams are quite similar to those in the previous example, but have one extra component.
The Kirby calculus moves similar to those in Figure \ref{kirby125} show that the branched double cover 
is the Seifert fibered space $M(-1; 2/3, 2/3, 1/n)$. 
\begin{figure}[!htbp] 
\includegraphics[width = 5.5cm]{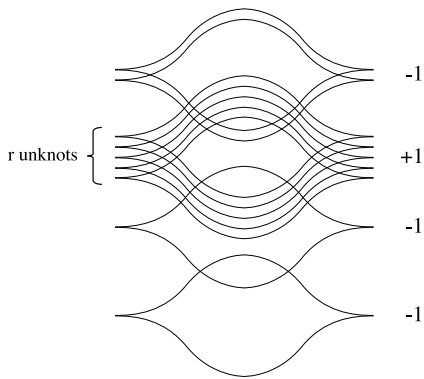}
\caption{The surgery diagrams for the branched double covers of the transverse links $K_r$ in Example \ref{e141}.}\label{141} 
\end{figure}

As before, we can show that all the braids $K_r$ are quasi-alternating. Indeed, 
we resolve one of the negative crossings to obtain $K_{r-1}$ and a trefoil as two resolutions;
we also observe that $K_0$ is the connected sum of two trefoils.
Since  $|\det (\text{trefoil})| =3$, $|\det( K_0)|=9$ and$ |\det (K_r)| = |H_1 ( M(-1; 2/3, 1/2, 1/n) )| = 9+3r$,  
each $K_r$ is quasi-alternating by induction. 

Next, we compute $sl(K_r)=3-r$, and $s=\sigma(K_r)= 4-r$; the hypotheses of Corollary \ref{cor:tightness} are therefore 
satisfied, and all branched covers $\Sigma(K_r)$ are tight contact manifolds.

For the contact structure on the branched cover of $K_4$, we compute $d_3=0$, which provides no obstruction to Stein-fillability.  However, for the braid $K_6$ we get $d_3=-\frac12$. We then argue as in the previous example 
to show that the branched cover of $K_6$ is not Stein-fillable 
(and thus the branched double covers of all braids  $K_r$ with $r\geq 6$ are not Stein-fillable either). 
\begin{figure}[!htbp] 
\includegraphics[width = 11cm]{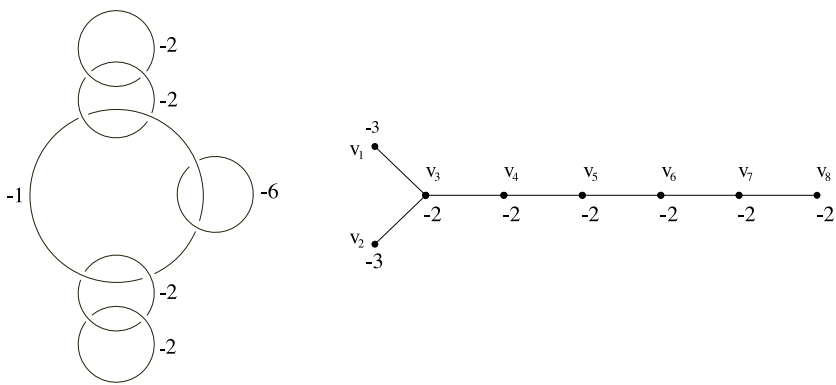}
\caption{A Kirby diagram for $Y= \Sigma(K_6)$ and the plumbing graph for $-Y$ of Example \ref{e141}.}\label{mY141} 
\end{figure}  
Denote $Y= \Sigma(K_6) = M(-1; 2/3, 1/2, 1/6)$; then $-Y$ is the boundary of the plumbing $W$ given by the graph 
on Figure \ref{mY141}.  As before, for any symplectic filling $X$ of $Y$  
the union  $X\cup W$ is a negative-definite closed 4-manifold with the  
standard diagonal intersection form. Up to changing the signs and the order of the vectors $e_i$ in the diagonal basis, there is a unique embedding
 of the lattice given by Figure \ref{mY141} into 
$\langle -1 \rangle ^n$, given by 
\begin{align*}
&v_3 \mapsto e_1+e_2, & &v_1 \mapsto -e_1 -e_3 +e_5, & &v_2 \mapsto -e_1 +e_3+e_4, & &v_4 \mapsto -e_2 +e_6,\\      
&v_5 \mapsto -e_6 +e_7, & &v_6 \mapsto -e_7+e_8,    & &v_7 \mapsto -e_8+e_9, & &v_8 \mapsto -e_9+e_{10},   
\end{align*}
and thus the orthogonal complement of this lattice in $\langle -1 \rangle ^n$ is 
$ \langle -9 \rangle  \oplus \langle -3 \rangle \oplus \langle -1 \rangle ^{n-10}$.
As in the previous example, we must have $c_1(X)=0$ for any Stein-filling.  
Since $|H_1(Y)| = 27$, similar parity argument  
shows that $b_2(X)=0$, and so $d_3$ must be zero, a contradiction. 

\end{example}

\begin{remark}

One can try to argue as in \cite{GLS} to investigate symplectic fillability in
Examples \ref{ex:e125} and \ref{e141}: a slightly more involved
agrument modulo 8 puts further restrictions on the value $d_3$ for symplectic fillings 
(with diagonal odd intersection form). However, this gives no obstruction to symplectic fillability 
of any contact structures in the above two examples. 

In the opposite direction, certain tight open books with the punctured torus page and pseudo-Anosov 
monodromy can be shown 
to be symplectically fillable as perturbations of taut foliations \cite{hkm2}.  
We note that our examples are not pseudo-Anosov, so these results do not apply.

\end{remark}

\begin{example} \label{e130}
A transverse representative of the mirror of $10_{130}$ with the maximal self-linking number 
is given by the braid $\sigma_1^{-3} \sigma_2 \sigma_1^2 \sigma_2^2 \sigma_3 \sigma_2^{-1} \sigma_3$.
We consider a family of transverse braids 
 $$
  K_r=\sigma_1^{-r} \sigma_2 \sigma_1^2 \sigma_2^2 \sigma_3 \sigma_2^{-1} \sigma_3.
  $$
First, we check that all the underlying links are quasi-alternating. Resolve of the negative crossings  
among those given by $\sigma_1^{-r}$ to obtain $K_{r-1}$ as one of the resolutions and the unknot as the other.
Observe that $K_0$ is a two-component alternating link of $\det =14$ (with $5_2$ knot and the unknot as 
components, linked once). Finally, compute $|\det (K_r)| = 14+r$ (one way to see this is to compute the size of $H_1$
of the branched double cover of $K_r$ which is a Seifert fibered space shown on Figure \ref{130}). 

\begin{figure}[!htbp] 
\includegraphics[width = 11cm]{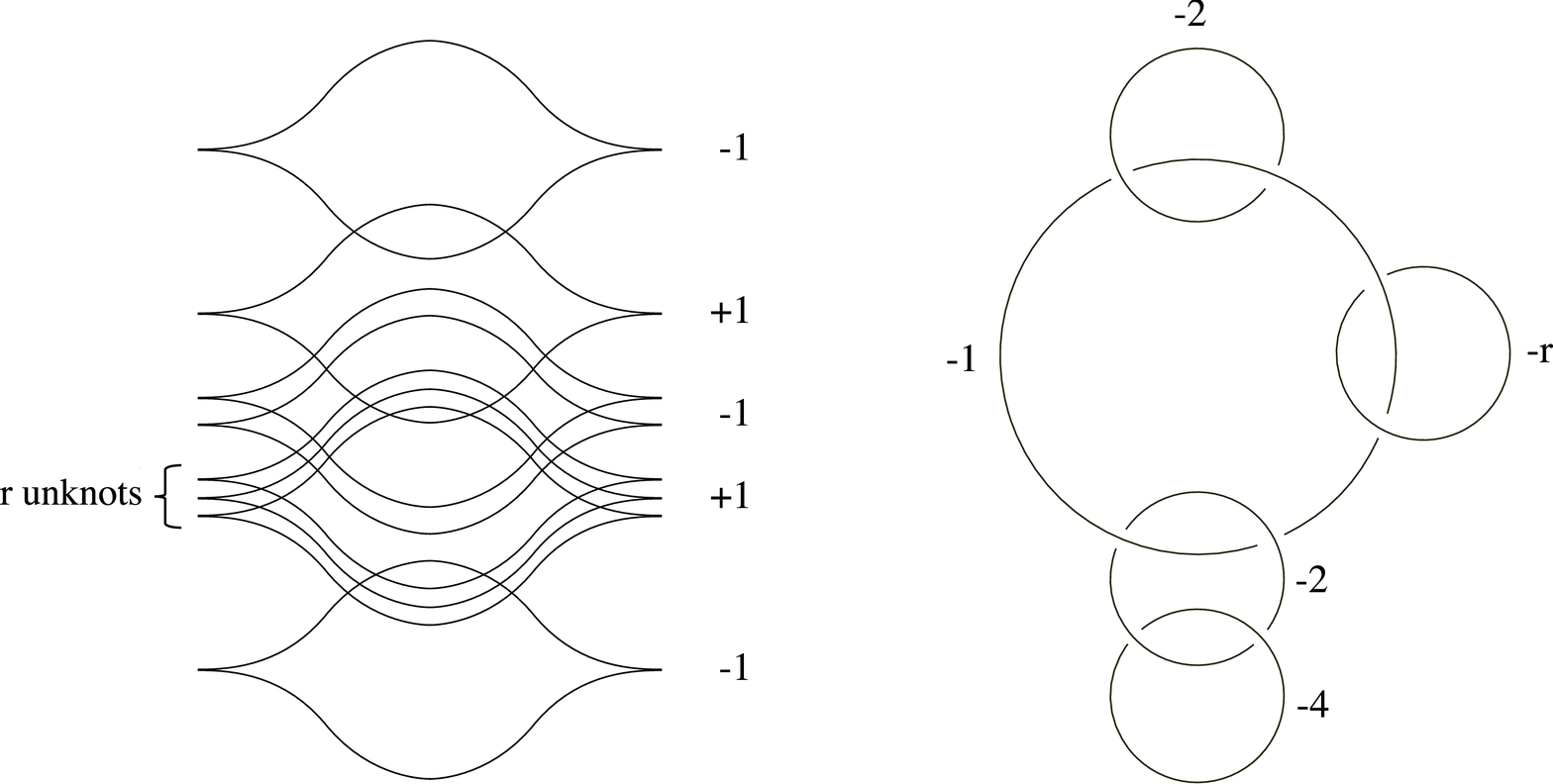}
\caption{The surgery diagrams for the branched double covers of the transverse links $K_r$ in Example \ref{e130}.}\label{130} 
\end{figure}
  
The hypothesis of Corollary \ref{cor:tightness} holds: $sl(K_r)= 2-r = \sigma-1$. Therefore, the branched double
covers of the transverse links $K_r$ are all tight.   

These contact structures are also Stein non-fillable for $r\geq 5$; this can be seen by using the arguments 
of \cite[Section 7]{bald4}. (Although Theorem 7.1 of \cite{bald4} is stated for the case of 3-braids, its proof 
carries over to our example, since the corresponding 3-manifold is an $L$-space, the contact structure is self-conjugate, and its  3-dimensional invariant 
is negative. We leave the details to the reader.)  

\end{example}

\begin{remark} In Examples \ref{ex:e125} and \ref{e141}, transverse links are 3-braids, and 
the contact structures on the branched double covers can be given by open books whose page 
is a once-punctured torus. Tightness of these contact structures can be established by using results 
of \cite{hkm3} or \cite{bald1}. However, these results do not apply to Example \ref{e130} as it deals with 4-braids, 
and the page of the corresponding open books is a twice-punctured torus.  

In all of the above examples, we  checked explicitly that our families of links are quasi-alternating. In fact, a more 
involved argument (see \cite{bald4}) shows that all braids of the form 
$(\sigma_1 \sigma_2)^3   \sigma_1 \sigma_2^{-a_1} \dots  \sigma_1 \sigma_2^{-a_n}$ (with $a_i\geq 0$) are quasi-alternating and 
have $sl=\sigma-1$. Thus Corollary \ref{cor:tightness} applies to many more knots; moreover, a lot of the
corresponding contact manifolds are not Stein-fillable. 

We also note that a weaker condition, $\text{rk} (\kh(K)) = |\det (K)|$ is sufficient to ensure that the spectral 
sequence from $\kh$ to $\hf$ collapses at the $E^2$ stage. 
This can be checked (for  any individual reasonably small knot)
by a computer, for example using {\tt Kh} program that computes the rank of reduced Khovanov 
homology with $\zzt$ coefficients. Checking the second condition, $sl(K)=s-1$, is also routine for $\kh$-thin
knots (alternatively, one can use the {\tt Trans} program to check $\psi\neq 0$). Proving  tightness of the contact 
structure on the branched double cover is thus reduced to a computer calculation. 

\end{remark}

In the next example, we prove tightness of a contact structure which does not obviously arise as the branched cover of $(S^3,\xi_{st})$ along a transverse link (the corresponding open book includes a Dehn twist around $\alpha_0$).



\begin{example} \label{one-shark} Consider the contact manifold with surgery diagram shown on Figure \ref{shark1}. 
This diagram corresponds (via the procedure described in the beginning of this section together with 
a Legendrian ``flip'' as in Example \ref{ex:e125}) to the open book whose page $S$ is a genus 2 surface with one boundary component, and the monodromy is 
$\phi = D_{\alpha_1} D_{\alpha_2} D_{\alpha_3} D_{\alpha_4} (D_{\alpha_3})^{-5} (D_{\alpha_4})^2 D_{\alpha_0}$ (where $D_{\alpha_1}$ is performed first and $D_{\alpha_0}$ last).  
Using  {\tt Trans} and {\tt Kh} programs, we find that $\psi(S, \psi) \neq 0$ and that $\text{rk} ( \kh(S, \phi)) = 9$. 
Since $\text{rk}( H_1(Y)) =9$ as well (we compute it as the determinant of the linking matrix), 
the spectral sequence collapses at the $E^2$ term. Therefore,  Figure \ref{one-shark} represents a tight contact structure. 
(A few Kirby moves as above show that the underlying smooth manifold is the Seifert fibered space 
$Y=M(-1; 2/3, 1/3, 1/5)$).

\begin{figure}[!htbp] 
\includegraphics[width = 4.5cm]{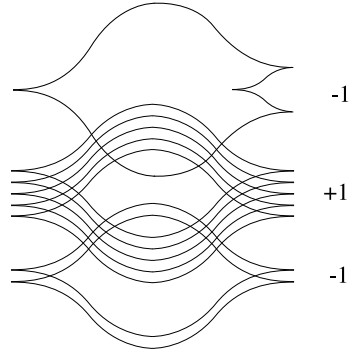}
\caption{A surgery diagram for the tight contact structure in Example \ref{one-shark}.}\label{shark1} 
\end{figure}

\end{example}

\subsection{Some limitations}
The examples in this subsection illustrate some of the limitations of our method.

\begin{example} \label{nex}
\begin{figure}[!htbp] 
\includegraphics[width = 11cm]{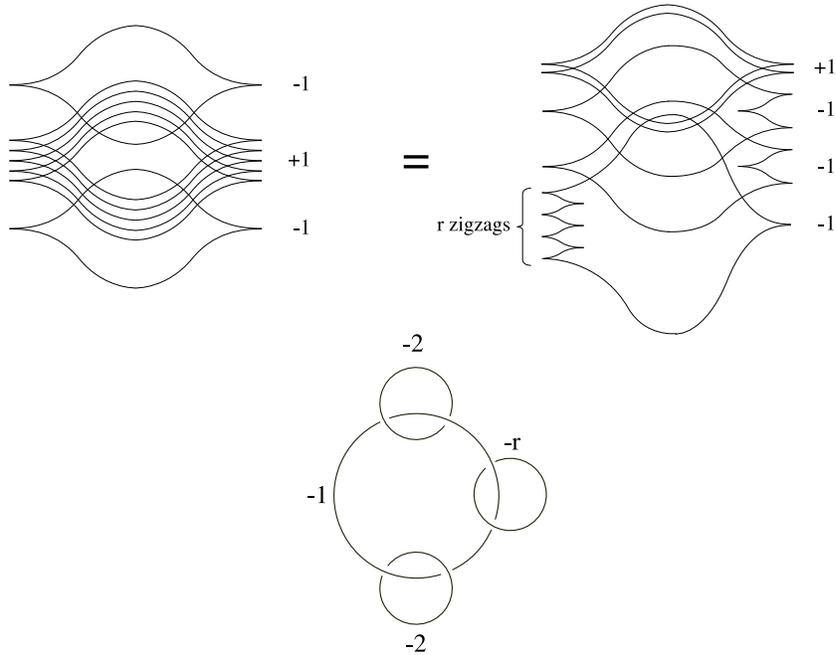}
\caption{The surgery diagrams for the branched double covers of the transverse links in Example \ref{nex}.}\label{non-ex} 
\end{figure}  
 
Consider a family of  (non-quasi-alternating) 
transverse 3-braids 
$$
K_r=\sigma_1^{-r} \sigma_2 \sigma_1^2  \sigma_2.
$$

Using the  {\tt Trans} program and \cite[Theorem 4]{pla1}, we see that the transverse invariant $\psi$ vanishes  for $r>2$. However, 
the calculations of the contact invariant in \cite{hkm3} imply that the corresponding contact structures on the branched double covers are all tight, and have $c(\xi)\neq 0$. Thus, Khovanov homology fails to detect tightness in this case. It is interesting to take a closer look at the corresponding contact structures: they are given by surgery diagrams on the left of Figure \ref{non-ex}, and are very similar to the contact structures from Example \ref{ex:e125}. As the latter are obtained from the former by Legendrian surgery on a knot, the  contact structures $\xi_{K_r}$ cannot be Stein-fillable. The underlying smooth manifold is $M(-1;1/2, 1/2, 1/r)$; by \cite{GLS} it carries a unique tight, Stein non-fillable contact structure for each $r$ (shown on the right of Figure \ref{non-ex}), and we conclude that the contact structures $\xi_{K_r}$ are precisely those considered in \cite{GLS}.  By \cite{GLS}, most of these contact structures are not symplectically fillable, and one may wonder whether there is a relationship between the vanishing of $\psi$ and symplectic non-fillability which goes beyond Proposition \ref{prop:vanishing} (although such a relationship seems quite improbable). 
    
\end{example}

\begin{example} \label{nex2} We find it instructive to give an example where a non-vanishing element 
$\psi\in \kh(S, \phi)$ dies in the spectral sequence, so that $c(\xi)=0$. Consider the open book  with once-punctured genus two page $S$, and the monodromy $\phi = D_{\alpha_1} D_{\alpha_2} D_{\alpha_3} D_{\alpha_4} (D_{\alpha_3})^{-5} D_{\alpha_4} D_{\alpha_0}$; the corresponding contact manifold is given by the surgery diagram on Figure \ref{poincare}. By Kirby calculus similar to Figures \ref{kirby125}, \ref{-Yplumb}, the underlying manifold is the boundary of the $E_8$ plumbing, i.e. the Poincar\'e homology sphere $-\Sigma(2, 3, 5)$ with orientation reversed; by \cite{EH}, it carries no tight contact structures, therefore $c(\xi)=0$. However, {\tt Trans} tells us that $\psi(S, \phi) \neq 0$.  (In this case we have $\text{rk} ( \hf) =1$, while the $E^2$ term of the spectral sequence is of course bigger: {\tt Kh}  computes $\text{rk} ( \kh(S, \phi))   = 7$.) This example shows that the vanishing of $\khc(S,\phi)$ is not an invariant of $\xi_{S,\phi}$, as discussed at the end of Section \ref{sec:compute}.

\begin{figure}[!htbp] 
\includegraphics[width = 11cm]{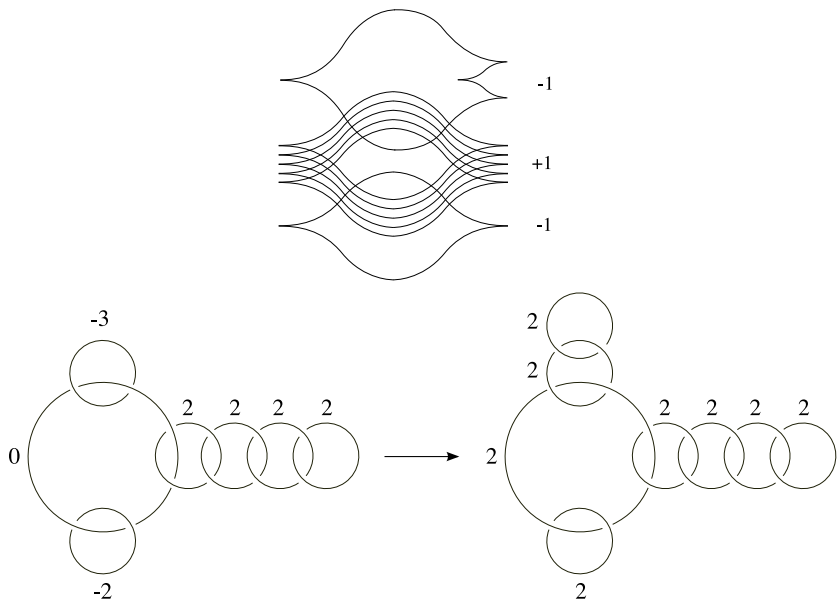}
\caption{A surgery diagram for the contact structure in   Example \ref{nex2}.}\label{poincare} 
\end{figure}

\end{example}

\subsection{Non-fillability by means of Khovanov homology}
\label{ssec:nonfill}

In this section, we give two examples which illustrate the use of Proposition \ref{prop:vanishing} in proving that a contact structure is not strongly symplectically fillable.

\begin{example}
Let $m(10_{132})$ denote the mirror of the knot $10_{132}$. The Poincar{\'e} polynomial for the reduced Khovanov homology of $m(10_{132})$ is $$t^{-7}q^{-14} + t^{-6}q^{-12} + t^{-5}q^{-10} + 2t^{-4}q^{-8} + t^{-3}q^{-8} + t^{-3}q^{-6} + t^{-2}q^{-6} + t^{-2}q^{-4} + t^{-1}q^{-2} + t^{0}q^{-2}.$$ (Here, the exponent of $t$ keeps track of the homological grading, while the exponent of $q$ keeps track of the quantum grading.) Note that the reduced Khovanov homology is supported in non-positive homological gradings. Therefore, Corollary \ref{cor:vanishing} implies that if $K$ is any transverse knot which is smoothly isotopic to $m(10_{132})$, and $sl(K)+1 \neq -2$, then $c(\xi_K) = 0$, and, hence, $\xi_K$ is not strongly symplectically fillable (or weakly symplectically fillable, since $\Sigma(K)$ is a rational homology 3-sphere\footnote{Strong symplectic fillability of $(M,\xi)$ is equivalent to weak symplectic fillability when $M$ is a rational homology 3-sphere \cite{oono}.}
). For example, the 4-braid $$ \sigma_3^{-1} \sigma_2^{2} \sigma_3^{-2} \sigma_2^{-1} \sigma_3 \sigma_1 \sigma_2^{-1} \sigma_1^{-2}$$ corresponds to a transverse representative $K$ of $m(10_{132})$ (this braid representative comes from \cite{kng}) with $sl(K) +1 = - 6$; hence, $c(\xi_K)=0$.

\end{example}

\begin{remark}
It is not hard to find transverse knots which satisfy the hypotheses of Corollary \ref{cor:vanishing}. For instance, of the 49 prime knots with 9 crossings, the knots (or their mirrors) in the set below have reduced Khovanov homologies supported in non-positive homological gradings: $$\{9_1,\,9_2,\,9_3,\,9_4,\,9_5,\,9_6,\,9_7,\,9_9,\,9_{10},\,9_{13},\,9_{16},\,9_{18},\,9_{23},\,9_{35},\,9_{38},\,9_{45},\,9_{46},\,9_{49}\}$$
\end{remark}

In the next example, we prove non-fillability of a contact structure which does not obviously arise as the branched cover of $(S^3,\xi_{st})$ along a transverse link (the corresponding open book includes a Dehn twist around $\alpha_0$).

\begin{example}
Let $(S,\phi)$ be the abstract open book $$(S_{2,1}, D_{\alpha_1}^{2} D_{\alpha_2}^{-1} D_{\alpha_3}^{-1} D_{\alpha_4}^{-1} D_{\alpha_0} D_{\alpha_4}^{-1} D_{\alpha_3}^{-1} D_{\alpha_2}^{-1} D_{\alpha_1}^{-1} D_{\alpha_3} D_{\alpha_4} D_{\alpha_2}).$$ Using the program {\tt Kh}, we find that the Poincar{\'e} polynomial for $\kh(S,\phi)$ is $$t^{-6} + 2t^{-5} + t^{-4} + t^{-3} + 2t^{-2} + 2 t^{-1}.$$ (Here, the exponent of $t$ keeps track of the homological grading.) Note that $\kh(S,\phi)$ is supported in negative homological gradings. Since the homological grading of $\khc(S,\phi)$ is zero, it follows at once that $\khc(S,\phi)$ vanishes. Therefore, Proposition \ref{prop:vanishing} implies that $c(S,\phi) = 0$, and, hence, that $\xi_{S,\phi}$ is not strongly symplectically fillable.

\end{example}

\newpage

\section{Comultiplication}
\label{sec:comult}
In \cite{bald3}, the first author shows that the contact invariant $c(S,\phi)$ satisfies the following naturality property with respect to a comultiplication map on Heegaard Floer homology. 

\begin{theorem}[{\rm \cite[Theorem 1.4]{bald3}}]
\label{thm:comultc}
If $h$ and $g$ are two boundary-fixing diffeomorphisms of $S$, then there is a comultiplication map $$\mu:\hf(-M_{S, h g}) \rightarrow \hf(-M_{S, h}) \otimes_{\zzt} \hf(-M_{S,g})$$ which sends $c(S, h g)$ to $c(S, h)\otimes c(S,g)$.
\end{theorem}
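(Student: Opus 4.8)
The plan is to construct the comultiplication map $\mu$ at the chain level using a Heegaard triple-diagram adapted to the open books $(S,hg)$, $(S,h)$, and $(S,g)$, and to show that the map respects the contact elements by tracking the distinguished intersection point that represents $c(S,\phi)$. First I would recall the standard setup: given a boundary-fixing diffeomorphism $\phi$ of $S$, one builds a Heegaard diagram $(\Sigma,\be,\ga)$ for $-M_{S,\phi}$ in which the contact class $c(S,\phi)$ is represented by a specific generator $\xhg$ (resp.\ $\xh$, $\xg$) coming from the pages of the open book; this is the Honda--Kazez--Mati\'c description of the contact invariant. The key geometric input is that the three open books $(S,hg)$, $(S,h)$, and $(S,g)$ can be encoded simultaneously on one Heegaard surface $\Sigma$ equipped with three sets of attaching curves $\al$, $\be$, $\ga$, chosen so that $(\Sigma,\al,\be)$ presents $-M_{S,h}$, $(\Sigma,\be,\ga)$ presents $-M_{S,g}$ (up to orientation conventions), and $(\Sigma,\al,\ga)$ presents $-M_{S,hg}$; the factorization $\phi = hg$ is exactly what makes such a ``composite'' diagram possible, since one can place the curves dual to $h$ and the curves dual to $g$ in disjoint regions of the stabilized surface.

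Next I would define $\mu$ to be the map on homology induced by counting holomorphic triangles: $\mu(\mathbf{x}) = \sum_{\mathbf{y},\mathbf{z}} \#\mathcal{M}(\mathbf{x},\mathbf{y},\mathbf{z}) \, \mathbf{y}\otimes\mathbf{z}$, summed over triangles in the appropriate $\Sc$-structures (or, dually, using the cobordism-map formulation, the map associated to the four-manifold obtained from the triple-diagram, as in \cite{bald3}). One must check that $\mu$ is a chain map, which is the usual consequence of the Gromov compactification of the moduli space of index-one triangles degenerating into a triangle plus a disk on one of the three boundary edges. The core computation is then local: one shows that there is a \emph{unique} holomorphic triangle — necessarily of Maslov index zero and represented by a small triangle in the diagram — with vertices at $\xhg$, $\xh$, and $\xg$, and that all other triangles emanating from $\xhg$ land in $\be$-$\ga$ or $\al$-$\ga$ classes of strictly higher filtration (or simply do not contribute in the relevant $\Sc$-structure). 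This is where one invokes the HKM picture: the contact generators sit in a corner of the diagram where the only small triangle available is the ``obvious'' one, exactly as in the proof that the contact invariant is natural under the maps used in \cite{osz1} and \cite{bald3}.

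The main obstacle, as usual in this circle of ideas, is the area/admissibility bookkeeping needed to rule out the other triangle contributions to $\mu(\xhg)$ and to guarantee that the triple-diagram is weakly admissible so that the sums are finite — together with verifying that the small triangle at the contact generators genuinely has a holomorphic representative and contributes $1$ mod $2$. I expect this to reduce, after choosing the Heegaard surface and curves carefully, to the same local model already analyzed in \cite{bald3}, so the bulk of the work is in setting up the composite diagram for $\phi = hg$ and identifying the three contact generators within it; once that is done, the naturality statement $\mu(c(S,hg)) = c(S,h)\otimes c(S,g)$ follows by the triangle count. Finally, I would note that a priori $\mu$ depends on the diagram, but — as in \cite{bald3} — we only need the existence of \emph{some} such map with the stated property, so we do not need to prove invariance of $\mu$ itself.
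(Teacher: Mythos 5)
This statement is not proved in the present paper at all: it is imported verbatim from \cite{bald3} (where it is Theorem 1.4), so there is no in-paper proof to compare against. Your outline --- an HKM-style Heegaard triple-diagram realizing $-M_{S,hg}$, $-M_{S,h}$, and $-M_{S,g}$ simultaneously, a triangle-counting (adjoint) comultiplication, and the identification of the unique small holomorphic triangle joining the three contact generators, with admissibility handling the finiteness issues --- is essentially the argument of \cite{bald3}, so it is the right approach and matches the cited proof.
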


In this section, we use comultiplication to strengthen the relationship between $\khc(S,\phi)$ and $c(S,\phi)$. Our main result is the following.

\begin{theorem}
\label{thm:comult}
If $h $ and $g$ are two compositions of Dehn twists around curves in $S$, then there is a comultiplication map $$\mu: \kh(S, h g) \rightarrow \kh(S, h) \otimes_{\zzt}\kh(S,g)$$ which sends $\khc(S, h g)$ to $\khc(S, h)\otimes\khc(S,g)$.
\end{theorem}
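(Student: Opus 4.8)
The plan is to build the comultiplication map $\mu$ at the chain level on the complexes $(C(S,hg),D)$, $(C(S,h),D)$, and $(C(S,g),D)$, and then show it descends to the $E^2_I$ (i.e.\ $\kh$) terms and respects the distinguished elements. First I would fix monodromy presentations $h = D_{\gamma_1}^{\epsilon_1}\cdots D_{\gamma_p}^{\epsilon_p}$ and $g = D_{\delta_1}^{\eta_1}\cdots D_{\delta_q}^{\eta_q}$, so that $hg$ is presented by the concatenated word of length $n = p+q$. Then the resolution cube for $(S,hg)$ is indexed by $\{0,1\}^{p+q} = \{0,1\}^p \times \{0,1\}^q$, and for each pair $(i,i') \in \{0,1\}^p \times \{0,1\}^q$ the complete resolution $(S,hg)_{(i,i')}$ should be identified, via handleslides as in the proof of Lemma~\ref{lem:ress1s2} and in \cite{bald3}, with a manifold built from $(S,h)_i$ and $(S,g)_{i'}$ — the key geometric input being that the surgery curves coming from $h$ and those coming from $g$ can be isotoped onto disjoint pages and hence unlinked, so that $(S,hg)_{(i,i')}$ is obtained by a ``fiber-sum along the binding'' type operation on $(S,h)_i$ and $(S,g)_{i'}$. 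This is the open-book analogue of the comultiplication cobordism in \cite{bald3}, applied fiberwise over the resolution cube.

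Next I would define $\mu$ on $E^1_I$ by taking, on each summand $\hf((S,hg)_{(i,i')})$, the map induced by the relevant cobordism (composition of the $2$-handle maps from \cite{bald3}) landing in $\hf((S,h)_i)\otimes\hf((S,g)_{i'})$, and check that the collection of these maps is a chain map with respect to $D^1_I$ on the source and $D^1_I \otimes 1 + 1 \otimes D^1_I$ on the target. Commutativity with the cube differentials reduces to the standard fact that $2$-handle cobordism maps compose correctly and that the squares in the resolution cube commute up to the prescribed identifications — exactly the bookkeeping carried out in \cite{osz12} and \cite{bald3}, now with the extra product structure. Passing to homology gives the map $\mu$ on $\kh(S,hg) = E^2_I(S,hg) \to E^2_I(S,h)\otimes E^2_I(S,g) = \kh(S,h)\otimes\kh(S,g)$, using that over $\zzt$ the $E^2$ of a tensor product of filtered complexes is the tensor product of the $E^2$'s (Künneth), combined with the fact that $\mu$ is filtered.

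For the statement about the contact elements, I would argue as in the proof of Lemma~\ref{lem:psi} and the discussion preceding the proof of Theorem~\ref{thm:tight}: recall that $\khc$ corresponds, after the bi-filtered cancellation reducing $C(S,\phi)$ to $\bigoplus_i \hfk((S,\phi)_i, B_i)$, to the generator $c$ of $\hfk((S,\phi)_{i_o},B_{i_o},-g(S))$ sitting in the all-$\infty$ resolution. Under the identification of the all-$\infty$ resolution of $(S,hg)$ with the ``sum'' of the all-$\infty$ resolutions of $(S,h)$ and $(S,g)$ — each a connected sum of $S^1\times S^2$'s — the cobordism map on the top-dimensional exterior powers of $H_1$ sends the generator to the tensor product of generators; this is precisely the computation behind Theorem~\ref{thm:comultc}, restricted to the $i_o$-summand. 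Since $\khc(S,hg)$ is the image of this generator in $\kh(S,hg)$ and $\mu$ is defined compatibly with these images, naturality of $\khc$ under $\mu$ follows, keeping in mind that $\khc$ lives in homological grading $0$ so the grading conventions match up.

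The main obstacle I expect is the first step: verifying carefully that $\mu$, as a sum of cobordism-induced maps over the resolution cube, is genuinely a chain map — i.e.\ that the fiberwise comultiplication cobordisms are compatible with \emph{all} the edge maps $D_{i,i'}$ of the surgery cube, including the subtle cases where an edge changes a surgery coefficient on a $\gamma$-curve versus on a meridian (cf.\ the two cases in the discussion before Definition~\ref{def:kh}). This requires a Heegaard multi-diagram that simultaneously realizes the surgeries for $(S,hg)$ \emph{and} the comultiplication cobordism, so that associativity of holomorphic polygon counts (as in \cite{osz12}) yields the chain homotopy commutativity; assembling such a diagram and checking the polygon-count identities is where the real work lies, though it parallels closely the constructions already in \cite{osz12} and \cite{bald3}.
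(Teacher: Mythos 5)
Your proposal follows essentially the same route as the paper: the paper realizes your ``comultiplication cobordism over the resolution cube'' by adjoining $0$-framed knots $\beta_j$ (built from arcs on $S$, as in \cite{bald3}) to the surgery link, so that the filtered chain map $F:C(S,hg)\to C(S,hg)_0$ comes for free from the Ozsv{\'a}th--Szab{\'o} multi-diagram machinery of \cite{osz12} (resolving your ``main obstacle'' exactly as you suggest), the target complex splits as $E^1_I(S,h)\otimes_{\zzt} E^1_I(S,g)$, and the statement about contact elements is the Proposition~\ref{prop:s1s2} computation on the all-$\infty$ resolution that you describe. The one point to tighten is the wording in your first paragraph: $(S,hg)_{(i,i')}$ is \emph{not} identified with a sum of $(S,h)_i$ and $(S,g)_{i'}$ (e.g.\ the all-$\infty$ resolutions are $\#^{2k+r-1}(S^1\times S^2)$ versus $\#^{4k+2r-2}(S^1\times S^2)$); there is only a $2$-handle cobordism between them, which is what your second paragraph correctly uses.
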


\begin{proof}[Proof of Theorem \ref{thm:comult}]
Suppose that $h =D_{\gamma_1}^{\epsilon_1} \cdots D_{\gamma_p}^{\epsilon_p}$ and $g =D_{\gamma_{p+1}}^{\epsilon_{p+1}} \cdots D_{\gamma_{n}}^{\epsilon_{n}}$, and let $$L_1 = \bigcup_{j=1}^p \gamma_j \times \{t_j\}\text{,  and  }L_2= \bigcup_{j=p+1}^n \gamma_j \times \{t_j\}$$ be the corresponding links in $-M_{S,id}$. Recall that $-M_{S,hg}$ is obtained from $-M_{S,id}$ by performing $\epsilon_j$-surgery on each $\gamma_j \times \{t_j\}$. Let $b_1,\dots,b_{2k+r-1}$ be the arcs on $S=S_{k,r}$ indicated in Figure \ref{fig:Sarcs}. Choose two points, $0<T_A<t_1$ and $t_p < T_B < t_{p+1}$, and let $\beta_j$ be the knot in $-M_{S,id}$ defined by $$\beta_j = b_j\times \{T_A\} \,\cup\, b_j \times \{T_B\}\, \cup\, \partial b_j \times [T_A,T_B].$$

\begin{figure}[!htbp]
\begin{center}
\includegraphics[width=11cm]{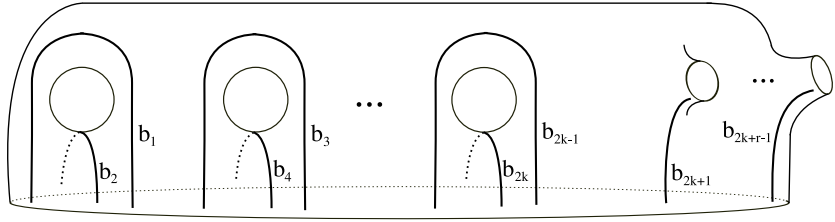}
\caption{\quad The endpoints of the arcs $b_j$ lie in a small collar neighborhood of $\partial S_{k,r}$ which is disjoint from the curves $\gamma_j$.}
\label{fig:Sarcs}
\end{center}
\end{figure}

Denote by $Y$ the 3-manifold obtained from $-M_{S,id}$ by performing $0$-surgery on each $\beta_j$, and let $L_3 \subset Y$ be the induced link $$L_3=\bigcup_{j=1}^n \gamma_j \times \{t_j\}.$$ Let $(C(S,hg)_0,D_0)$ be the complex associated to the multi-diagram compatible with all possible combinations of $\epsilon_j$-, $0$-, and $\infty$-surgeries on the components of $L_3$. If $(S,hg)_{i,0}$ is the result of $0$-surgery on each induced $\beta_j$ in $(S,hg)_i$, then $$C(S,hg)_0=\bigoplus_{i \in \{0,1\}^n} \cf((S,hg)_{i,0}).$$ We define the $I$-grading on $C(S,hg)_0$ by $I(x) = |i|$ for $x \in \cf((S,hg)_{i,0}).$ By the construction in \cite{osz12}, there is an $I$-filtered chain map $$F:C(S,hg)\rightarrow C(S,hg)_0,$$ which is a sum of maps $$F_{i,i'}:\cf((S,hg)_i)\rightarrow \cf((S,hg)_{i,0}),$$ over all pairs $i,i'$ for which $i \leq i'$.

Observe that $Y$ is diffeomorphic to $-M_{S,id}\,\#-M_{S,id}$. If we think of the links $L_1$ and $L_2$ as being contained in the first and second $-M_{S,id}$ summands of $Y$, respectively (so that there is no linking between $L_1$ and $L_2$), then $L_3$ is simply the union $L_1 \cup L_2$. This becomes evident after a sequence of handleslides (see Figure \ref{fig:comultdbc} for an example). In particular, $$(S,hg)_{i,0} \cong (S,h)_{i_H} \# (S,g)_{i_G},$$ where $i_H = (i_1,\dots,i_p)$ and $i_G = (i_{p+1},\dots,i_n).$ Therefore, the complex $E^1_I(S,hg)_0$ decomposes as a tensor product, \begin{equation}\label{eqn:tensorhg}E^1_I(S, h g)_0 \cong E^1_I(S, h) \otimes_{\zzt}E^1_I(S,g).\end{equation}

\begin{figure}[!htbp]
\begin{center}
\includegraphics[width=11cm]{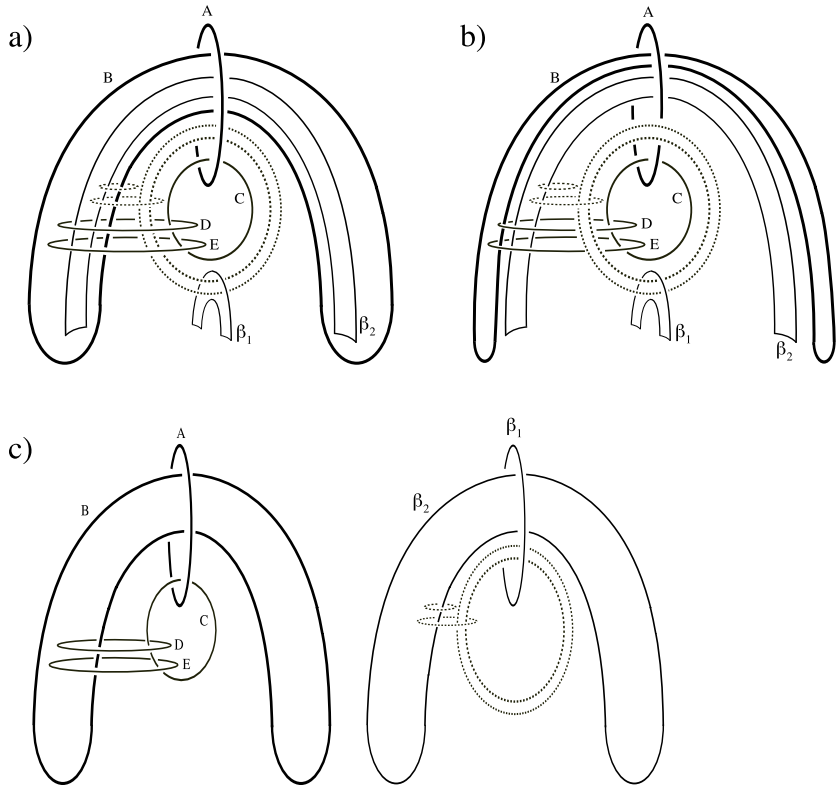}
\caption{\quad In this example, $S=S_{1,1}$, $h = D_{\alpha_1}D_{\alpha_2}^{-2}D_{\alpha_1}^{-1}$, and $g = D_{\alpha_2}D_{\alpha_1}^2$. We begin with the diagram in a). Though we have not written them down, the surgery coefficients on $A$ and $B$ are both $0$. The dotted curves comprise the link $L_1$ corresponding to the diffeomorphism $h$, and the solid curves $C$, $D$, and $E$ comprise the link $L_2$ corresponding to $g$. $L_3$ is the union of $L_1$ with $L_2$, viewed as a link in the 3-manifold $Y$ obtained by performing $0$-surgeries on $\beta_1$ and $\beta_2$. In $Y$, slide $B$ over $\beta_2$ so that $B$ no longer links the components of $L_1$. Next, slide $A$, $D$, and $E$ over $\beta_1$ so that they no longer link the components of $L_1$. We arrive at the diagram in b), which is isotopic to the diagram in c). Note that $L_1$ and $L_2$ are unlinked in $Y$.}
\label{fig:comultdbc}
\end{center}
\end{figure}

$F$ induces a map $$F^1: E^1_I(S,hg) \rightarrow E^1_I(S,hg)_0$$ which is the sum of the maps $$(F_{i,i})_*:\hf((S,hg)_i)\rightarrow \hf((S,hg)_{i,0}),$$ over all $i\in \{0,1\}^n$. Each $(F_{i,i})_*$ is induced by the 2-handle cobordism corresponding to the $0$-surgeries on the induced knots $\beta_j \subset (S,hg)_i$. Note that $$(S,hg)_{i_o}\cong \#^{2k+r-1}(S^1 \times S^2),$$ and the element $\khct(S,hg)$ is the generator of $\wedge^{2k+r-1}H_1((S,hg)_{i_o}) \cong \zzt$ under the identification of $\hf((S,hg)_{i_o})$ with $\wedge^*H_1((S,hg)_{i_o}).$ Similarly, $$(S,hg)_{i_o,0}\cong \#^{4k+2r-2}(S^1 \times S^2),$$ so $\hf((S,hg)_{i_o,0})$ may be identified with $\wedge^*H_1((S,hg)_{i_o,0}).$ Since the induced knots $\beta_j\subset (S,hg)_{i_o}$ are unknots, $(F_{i_o,i_o})_*$ sends $\khct(S,hg)$ to the generator of $\wedge^{4k+2r-2}H_1((S,hg)_{i_o,0})$, by Proposition \ref{prop:s1s2}. Under the isomorphism in Equation \ref{eqn:tensorhg}, this generator corresponds to $$\khct(S,h)\otimes \khct(S,g) \in \cf((S,h)_{(i_o)_H}) \otimes \cf((S,g)_{(i_o)_G}).$$ The map $\mu$ induced on $E^2_I$ therefore takes $\khc(S,hg)$ to $\khc(S,h)\otimes \khc(S,g)$.

\end{proof}

\newpage

\bibliographystyle{hplain}
\bibliography{References}

\end{document}